\newtheorem{theorem}{Theorem}[section]
\newtheorem{proposition}[theorem]{Proposition}
\newtheorem{corollary}[theorem]{Corollary}
\newtheorem{lemma}[theorem]{Lemma}
\theoremstyle{definition}
\newtheorem{definition}[theorem]{Definition}
\newtheorem{remark}[theorem]{Remark}
\theoremstyle{problem}
\newcommand{\Aut}{\mathrm{Aut}}
\newcommand{\Opp}{\mathrm{Opp}}
\newcommand{\Ker}{\mathrm{Ker}}
\newcommand{\Imm}{\mathrm{Im}}
\newcommand{\RR}{\mathbb{R}}
\newcommand{\ZZ}{\mathbf{Z}}
\newcommand{\QQ}{\mathbb{Q}}
\newcommand{\CC}{\mathbb{C}}
\newcommand{\NN}{\mathbb{N}}
\newcommand{\cat}{$\mathrm{CAT}(0)$\xspace}
\newcommand{\Min}{\mathrm{Min}}
\newcommand{\Ch}{\mathrm{Ch}}
\newcommand{\St}{\mathrm{St}}
\newcommand{\Stab}{\mathrm{Stab}}
\newcommand{\Fix}{\mathrm{Fix}}
\newcommand{\id}{\operatorname{id}}
\newcommand{\SL}{\operatorname{SL}}
\newcommand{\G}{\operatorname{\mathbb{G}}}
\newcommand{\dist}{\operatorname{dist}}
\newcommand{\st}{\operatorname{St}}
\def\og{\leavevmode\raise.3ex\hbox{$\scriptscriptstyle\langle\!\langle$~}}
\def\fg{\leavevmode\raise.3ex\hbox{~$\!\scriptscriptstyle\,\rangle\!\rangle$}}
\title{Strong transitivity, Moufang's condition\\ and the Howe--Moore property}
\author{Corina Ciobotaru\thanks{corina.ciobotaru@gmail.com}}
\date{October 8, 2021}
\begin{document}
\maketitle

\begin{abstract}
Firstly, we prove that every closed subgroup $H$ of type-preserving automorphisms of a locally finite thick affine building $\Delta$ of dimension $\geq 2$ that acts strongly transitively on $\Delta$ is Moufang. If moreover $\Delta$ is irreducible and $H$ is topologically simple, we show that $H$ is the subgroup $\G(k)^+$ of the $k$-rational points $\G(k)$ of the isotropic simple algebraic group $\G$ over a non-Archimedean local field $k$ associated with $\Delta$. Secondly, we generalise the proof given in \cite{BM00b} for the case of bi-regular trees to any locally finite thick affine building $\Delta$, and obtain that any topologically simple, closed, strongly transitive and type-preserving subgroup of $\Aut(\Delta)$ has the Howe--Moore property.  This proof is different than the strategy used so far in the literature and does not relay on the polar decomposition $KA^+K$, where $K$ is a maximal compact subgroup, and the important fact that $A^+$ is an abelian maximal sub-semi-group. 
\end{abstract}

\section{Introduction}

In this article we will work in the setting where $\Delta$ is a locally finite thick affine building of dimension $\geq 2$ that will always be considered with its complete system of apartments. With respect to that, the ideal boundary (also called the visual boundary) of $\Delta$ is denoted by $\partial_{\infty} \Delta$ and this is endowed with a structure of a spherical building. By \cite[Theorem 11.79]{AB}  the apartments of $\partial_{\infty} \Delta$ are in one to one correspondence with those of $\Delta$.  We denote by $\Aut(\Delta)$ the full group of automorphisms of $\Delta$ and by $\Aut_{0}(\Delta) \leq \Aut(\Delta)$ the subgroup of all type-preserving automorphisms, meaning those automorphisms that preserve a chosen colouring of the vertices of the model chamber and thus of the vertices of the chambers of $\Delta$. 

We say $G \leq \Aut_{0}(\Delta)$ acts \textbf{strongly transitively} on $\Delta$ if for any two pairs $(\mathcal{A}_1,c_1 )$ and $(\mathcal{A}_2 , c_2)$, consisting each of an apartment $\mathcal{A}_i$ of $\Delta$ and a chamber $c_i \in  \Ch(\mathcal{A}_i)$, there exists $g \in G$ such that $g(\mathcal{A}_1) = \mathcal{A}_2$ and $g(c_1) = c_2$.

\begin{definition}
Let $X$ be an affine or spherical building. For a simplex $\sigma \subset X$ let $\st_{X}(\sigma)$ be the set of all simplices in $X$ that contain  $\sigma$ as a sub-simplex, $\st_{X}(\sigma)$ is called the \textbf{star of $\sigma$ in $X$}. A \textbf{root} $\alpha$ of $X$ is a half-apartment of $X$ and its \textbf{boundary wall} (not the ideal boundary) is denoted by $\partial \alpha$.  Let $\mathcal{A}_{X}(\alpha)$ be the set of all apartments of $X$ that contain $\alpha$ as a half-apartment.  \end{definition}

\begin{definition}
 Let $H$ be a closed subgroup of $\Aut_{0}(\Delta)$ and $\alpha$ any root of $\partial_{\infty} \Delta$. The \textbf{root group}  $U_{\alpha}(H)$ associated with $\alpha$ and $H$ is defined to be the set of all elements $g$ of $H$ which fix $\st_{\partial_{\infty} \Delta}(P)$ pointwise  for every panel $P$ in $\alpha - \partial \alpha$. We take $$H^{+}:= \langle U_{\alpha}(H) \; \vert \; \alpha \text{ root of } \partial_{\infty} \Delta  \rangle.$$
\end{definition}

\begin{remark}
Let $H$ be a closed subgroup of $\Aut_{0}(\Delta)$ and $\alpha$ a root of $\partial_{\infty} \Delta$. Then, as the action of $H$ on $\partial_{\infty} \Delta$ is continuous, we have $U_{\alpha}(H)$ is a closed subgroup of $H$. 
\end{remark}

\begin{definition}
\label{def::moufang}
Let $H$ be a closed subgroup of $\Aut_{0}(\Delta)$. We say $H$ is  \textbf{Moufang} if for every root $\alpha$ of $\partial_{\infty} \Delta$ the associated root group $U_{\alpha}(H)$ acts transitively on $\mathcal{A}_{\partial_{\infty} \Delta}(\alpha)$. 
\end{definition}

Recall the following well known results from the literature. For $\G$ a semisimple algebraic group over a non-Archimedean local ﬁeld $k$, of $k$-rank $\geq 2$ (i.e. isotropic over $k$ (see \cite[Definition 20.1]{Bo})), let $G := \G(k)$ be the group of all $k$–rational points of $\G$. By \cite{BT,BTII}, with the group $G$ one associates a locally ﬁnite thick affine building $\Delta$ where $G$ acts by type-preserving automorphisms and strongly transitively. Moreover, by \cite{BoT65}, the group $G$ always possesses a canonical BN-pair, and if moreover the spherical building $\partial_{\infty} \Delta$ at infinity of $\Delta$ is irreducible and of dimension $\geq 2$, then $\partial_{\infty} \Delta$ is Moufang (in the sense of \cite[Definition 7.27]{AB}, see \cite[Theorem 7.59]{AB}). Also, by \cite[Prop.6.14]{BoT}, or \cite[Theorem. 2.3.1]{Ma},  $G^{+} = \langle U_{\alpha}(G) \;  \vert \;  \alpha \text{ is a root of } \partial_{\infty} \Delta \rangle$ is a closed normal subgroup of $G$ and the factor group $G/G^{+}$ is compact.  

If in addition we suppose $\G$ is also (absolutely) almost simple (for the definition see \cite[page 21]{Ma}, thus $\Delta$ is irreducible by \cite[Proposition 14.10(3)]{Bo}), we will call $\G$ an \textbf{isotropic simple algebraic group over the non-Archimedean local ﬁeld $k$}.  By \cite[Remark 6.15]{BoT}, if moreover $\G$ is simply connected (see \cite[Definition 1.4.9]{Ma}), then $G= G^{+}$. 

Let $\G$ be a semisimple, (absolutely) almost simple algebraic group over a local field $k$. Then by \cite[Section 5.8]{Tits74} the spherical building $\partial_{\infty} \Delta$ associated with $G=\G(k)$ (we keep the same notation as above) determines $k$ up to isomorphism, $G^{+}$ up to isomorphism and $\G$ up to special isogeny. If moreover $\G$ is $k$-split then those groups are classified up to isogeny by the type of the root system, and each isogeny class contains a unique (up to isomorphism) simply connected group and a unique adjoint group (see \cite[Section 4]{Tits79}, \cite[Section 2.2]{RR}).

The first result that we prove in this article is the following:
\begin{theorem}
\label{main_thm2}
Let $\Delta$ be a locally finite thick affine building of dimension $\geq 2$. Let $H$ be a closed subgroup of $\Aut_{0}(\Delta)$ that acts strongly transitively on $\Delta$. Then $H$ is Moufang.  
\end{theorem}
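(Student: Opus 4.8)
My plan is to recast the Moufang condition for a root $\alpha$ of $\partial_{\infty}\Delta$ as a transitivity statement about the pointwise fixator $\Fix_{H}(\alpha)$, and then exploit the fact that strong transitivity on $\Delta$ forces $\Fix_{H}(\partial_{\infty}\mathcal{A})$ to be large for every apartment $\mathcal{A}$ of $\Delta$, because it contains all apartment-translations.

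\emph{Step 1: consequences of strong transitivity.} I would record three facts. (i) Strong transitivity of $H$ on $\Delta$ implies strong transitivity of $H$ on $\partial_{\infty}\Delta$: using the apartment correspondence of \cite[Theorem 11.79]{AB}, lift a chamber of $\partial_{\infty}\Delta$ to a sector of $\Delta$ based at a vertex of prescribed type; a type-preserving automorphism matching the base chambers of two such sectors also matches their tips (by type), hence matches the sectors and the chambers at infinity. (ii) For every apartment $\mathcal{A}$ of $\Delta$, $\Stab_{H}(\mathcal{A})$ acts transitively on the chambers of the Coxeter complex $\mathcal{A}$ by type-preserving automorphisms, hence induces the full affine Weyl group $W_{\mathrm{aff}}$; thus every translation in the translation lattice of $W_{\mathrm{aff}}$ is realised by an element of $\Stab_{H}(\mathcal{A})$, and such an element acts trivially on $\partial_{\infty}\mathcal{A}$, so $\Fix_{H}(\partial_{\infty}\mathcal{A})$ contains a full-rank lattice of translations of $\mathcal{A}$. (iii) A type-preserving automorphism of a spherical Coxeter complex fixing a chamber is the identity; consequently, for apartments $\Sigma_{1},\Sigma_{2}$ of $\partial_{\infty}\Delta$ and a chamber $c\subseteq\Sigma_{1}\cap\Sigma_{2}$, any $g\in H$ with $g\Sigma_{1}=\Sigma_{2}$ and $gc=c$ (which exists by (i)) is forced to fix $\Sigma_{1}\cap\Sigma_{2}$ pointwise, since $g|_{\Sigma_{1}}$ is then the unique type-preserving isomorphism $\Sigma_{1}\to\Sigma_{2}$ fixing $c$, which is the canonical one.

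\emph{Step 2: reduction to a Levi-type decomposition.} Fix a root $\alpha$ of $\partial_{\infty}\Delta$ and a base apartment $\Sigma_{0}=\partial_{\infty}\mathcal{A}_{0}\supseteq\alpha$. By (i) and (iii), choosing a chamber $c\subseteq\alpha$ shows that $\Fix_{H}(\alpha)$ already acts transitively on $\mathcal{A}_{\partial_{\infty}\Delta}(\alpha)$: for any $\Sigma\supseteq\alpha$ there is $h\in H$ with $h\Sigma_{0}=\Sigma$ fixing $\Sigma_{0}\cap\Sigma\supseteq\alpha$ pointwise. Moreover $U_{\alpha}(H)$ is normal in $\Fix_{H}(\alpha)$ (conjugating by an element fixing $\alpha$, hence every panel $P$ in $\alpha-\partial\alpha$, preserves the defining condition), and $\Fix_{H}(\Sigma_{0})\leq\Fix_{H}(\alpha)$ stabilises $\Sigma_{0}$. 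Hence it suffices to prove the decomposition
\[
\Fix_{H}(\alpha)\;=\;U_{\alpha}(H)\cdot\Fix_{H}(\Sigma_{0})\eqstop
\]
indeed, writing the element $h$ above as $h=g z$ with $g\in U_{\alpha}(H)$, $z\in\Fix_{H}(\Sigma_{0})$, one gets $g\Sigma_{0}=h z^{-1}\Sigma_{0}=h\Sigma_{0}=\Sigma$, so $U_{\alpha}(H)$ is transitive on $\mathcal{A}_{\partial_{\infty}\Delta}(\alpha)$; as $\alpha$ was arbitrary this makes $H$ Moufang.

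\emph{Step 3 (the hard part): the decomposition.} The difficulty is that an element $f\in\Fix_{H}(\alpha)$ fixes the hemisphere $\alpha$ but in general moves the chambers of $\partial_{\infty}\Delta$ adjacent to $\alpha$ along a panel $P$ in $\alpha-\partial\alpha$ — and these lie in no apartment containing $\alpha$, so they cannot be reached by fixing a half-apartment of $\Delta$. One must produce a single $z\in\Fix_{H}(\Sigma_{0})$ whose action on every residue $\st_{\partial_{\infty}\Delta}(P)$, $P$ a panel in $\alpha-\partial\alpha$, cancels that of $f$; then $fz^{-1}$ fixes all these residues pointwise, i.e.\ $fz^{-1}\in U_{\alpha}(H)$. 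This is exactly where the affine structure enters: realising such a $P$ as $\partial_{\infty}F$ for a sector panel $F$ lying in a half-apartment $a\subseteq\mathcal{A}_{0}$ with $\partial_{\infty}a=\alpha$ and $F$ disjoint from the wall of $a$, the translations of $\mathcal{A}_{0}$ from Step 1(ii) lie in $\Fix_{H}(\Sigma_{0})$ and act on the sectors emanating from $F$ — equivalently on $\st_{\partial_{\infty}\Delta}(P)$ — fixing the two chambers contained in $\alpha$ and permuting the others; one has to show that this action is transitive on the remaining chambers of the residue and that the actions on the finitely many residues $\st_{\partial_{\infty}\Delta}(P)$, $P\subseteq\alpha-\partial\alpha$, can be matched simultaneously (if necessary by passing to a limit of suitable elements, using that $\Delta$ is locally finite, so $\Aut(\Delta)$ is locally compact, and that $H$ is closed). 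This transitivity/simultaneity statement — morally, that $\Fix_{H}(\Sigma_{0})$ plays the role of the torus in a root-group datum — is the main obstacle, and it is the only place where strong transitivity is used beyond the soft items in Step 1; everything else (items (i)--(iii), normality of $U_{\alpha}(H)$, and translating the statement between $\Delta$ and $\partial_{\infty}\Delta$) is routine.
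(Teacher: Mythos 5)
Your Steps 1 and 2 are sound and broadly parallel the paper's setup: the paper also works with the pointwise fixator $H_{\alpha}$ of the root (your $\Fix_{H}(\alpha)$), observes via strong transitivity at infinity that it acts transitively on $\mathcal{A}_{\partial_{\infty}\Delta}(\alpha)$, and reduces the theorem to showing that a subgroup of $H_{\alpha}\cap U_{\alpha}(H)$ retains this transitivity. But your Step 3 is where the entire content of the theorem lives, and you have not proved it — you explicitly flag the ``transitivity/simultaneity statement'' as the main obstacle and leave it open. Worse, the mechanism you gesture at is not the one that works: a translation of $\mathcal{A}_{0}$ lies in $\Fix_{H}(\partial_{\infty}\mathcal{A}_{0})$ and fixes \emph{all} chambers of $\partial_{\infty}\mathcal{A}_{0}$, and there is no reason its action on a residue $\st_{\partial_{\infty}\Delta}(P)$ should be transitive on the chambers outside $\Sigma_{0}$, nor that one element $z\in\Fix_{H}(\Sigma_{0})$ can simultaneously reproduce the (arbitrary) permutations that $f$ induces on the finitely many residues. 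Asking $\Fix_{H}(\Sigma_{0})$ to surject onto the product of these residue permutation groups is essentially asking for the Moufang property itself, so the argument is circular as stated.

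The paper closes this gap by a genuinely different, dynamical device: the Levi decomposition of Caprace--Monod (Theorem \ref{thm::levi2}). One enumerates the panels $P_{1},\dots,P_{N}$ of $\alpha-\partial\alpha$, picks interior ideal points $\xi_{i}$, and sets $V_{0}=H_{\alpha}$, $V_{i}=V_{i-1}\cap H_{\xi_{i}}^{u}$, so that $V_{N}\leq U_{\alpha}(H)$. Each $V_{i-1}$ is normalised by a radial sequence of translations $\{t_{i,n}\}\subset\Stab_{H}(\Sigma)$ with radial limit point $\xi_{i}$; conjugation by these translations contracts, and Theorem \ref{thm::levi2} yields $V_{i-1}=V_{i-1,\xi_{-i}}\cdot V_{i}$ with $V_{i}$ acting transitively on the $V_{i-1}$-orbit of $\xi_{-i}$ in $\Opp(\xi_{i})$. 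Since exactly one apartment of $\mathcal{A}_{\partial_{\infty}\Delta}(\alpha)$ contains both $\alpha$ and $\xi_{-i}$, transitivity on $\mathcal{A}_{\partial_{\infty}\Delta}(\alpha)$ passes from $V_{i-1}$ to $V_{i}$, and induction finishes the proof. If you want to complete your write-up along your own lines, you would need to import (or reprove) this Levi decomposition for closed subgroups normalised by radial sequences; the limit-of-conjugates argument you allude to in parentheses is the right instinct, but it must be carried out at the level of the groups $H_{\xi_{i}}^{u}$, one panel at a time, rather than by matching residue actions inside $\Fix_{H}(\Sigma_{0})$ all at once.
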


To prove Theorem \ref{main_thm2} we follow the same steps and techniques as in \cite[Section 7.G, Theorem D]{CM1} where the case $H= \Aut_{0}(\Delta)$ was treated. 

By \cite[Theorem 1.1 and Theorem 1.2 ]{CM} one can notice that a closed and strongly transitive subgroup of $\Aut_{0}(\Delta)$ is necessarily non-compact. 

Following Weiss \cite[Chapter 28]{Wi} and the remark from \cite[Section 3.B]{CM1} we recall the following non-trivial result: 
\begin{theorem}
\label{main_thm22}
Let  $\Delta$ be an irreducible locally finite thick affine building of dimension $\geq 2$ whose building at infinity $\partial_{\infty} \Delta$ is Moufang (in the sense of \cite[Definition 7.27]{AB}).  Then $\Delta$ is precisely a Bruhat--Tits building associated with the group $\G(k)$  of an isotropic simple algebraic group $\G$  over a non-Archimedean local field $k$, of $k$-rank $\geq 2$.
\end{theorem}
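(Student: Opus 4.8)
The plan is to deduce the statement from Weiss's classification of affine buildings \cite[Chapter 28]{Wi}, the point being to explain how the local finiteness hypothesis singles out the algebraic-group-over-a-local-field case; the bulk of the work is already contained in that reference and I would not reproduce it. First, I would record the easy structural reductions: since $\Delta$ is irreducible, thick and of dimension $\geq 2$, its building at infinity $\partial_{\infty}\Delta$ is an irreducible, thick, Moufang spherical building of rank $r:=\dim\Delta\geq 2$. By the classification of Moufang spherical buildings — Tits \cite{Tits74} when $r\geq 3$, the Tits--Weiss classification of Moufang polygons when $r=2$ — $\partial_{\infty}\Delta$ is described by a root group datum whose root groups are parametrised by an algebraic structure over a (skew) field $k$ (a field in most types, a division ring or a still finer datum in certain rank-$2$ types).

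Next I would invoke Tits' correspondence between affine buildings of dimension $\geq 2$ with Moufang boundary and spherical buildings carrying a \emph{valuation of the root datum} in the sense of Bruhat--Tits: the root groups of $\partial_{\infty}\Delta$ act on $\Delta$ by automorphisms, and recording how deeply into $\Delta$ each such automorphism fixes simplices yields a valuation $\varphi$; conversely $\Delta$ is reconstructed functorially from the pair $(\partial_{\infty}\Delta,\varphi)$. This equivalence is the substance of \cite[Chapter 28]{Wi}. Then I would translate the hypothesis: $\Delta$ is locally finite exactly when the value group of $\varphi$ is discrete and all successive quotients of the associated filtrations of the root groups are finite. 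Passing to the completion of $k$ (which leaves $\Delta$ unchanged, a locally finite affine building being complete) and using that $\Delta$ is a genuine \emph{thick} affine building, so that $\varphi$ is non-trivial, I would conclude that $k$ is a complete, discretely valued (skew) field with finite residue ring; by Wedderburn that residue ring is a finite field, so $k$ — or its centre, in the non-commutative cases — is a non-Archimedean local field.

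Finally I would feed this back into the classification: running through the list in \cite[Chapter 28]{Wi}, the only irreducible valued root data of rank $\geq 2$ with discrete valuation and finite residues are those attached to an absolutely almost simple, isotropic algebraic group $\G$ over a non-Archimedean local field $k$, and for such data the associated affine building is by construction the Bruhat--Tits building of $\G(k)$ (see \cite{BT,BTII}); hence $\Delta$ is this building, and its $k$-rank equals the rank of $\partial_{\infty}\Delta$, namely $\dim\Delta\geq 2$. I expect the main obstacle to be precisely the rank-$2$ case: there one must genuinely go through the Tits--Weiss classification of Moufang polygons — including the mixed and Ree--Tits families — and check that every such polygon admitting a discrete, finite-residue valuation is accounted for by a $k$-rank-$2$ algebraic group over a local field. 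This is carried out in \cite{Wi} and I would simply cite it; the case $r\geq 3$ is comparatively painless since Moufang spherical buildings of rank $\geq 3$ are already of algebraic origin by \cite{Tits74}.
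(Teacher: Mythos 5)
Your proposal is correct and coincides with the paper's treatment: the paper gives no proof of this theorem at all, merely recalling it from Weiss \cite[Chapter 28]{Wi} and the remark in \cite[Section 3.B]{CM1}, and your sketch (Moufang boundary $\Rightarrow$ valued root datum, local finiteness $\Rightarrow$ discrete valuation with finite residue field $\Rightarrow$ non-Archimedean local field, then the case-by-case classification including the rank-$2$ mixed and Ree--Tits families) is a faithful outline of exactly what that reference carries out. Nothing further is needed beyond the citation you already give.
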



\medskip

The second result of the article, that uses Theorem \ref{main_thm2} above, is the uniqueness of topologically simple, closed and strongly transitive subgroups $H \leq \Aut_{0}(\Delta)$, where $\Delta$ is an irreducible locally finite thick affine buildings of dimension $\geq 2$. Those subgroups are in fact the groups $G^+=\G(k)^{+}$ associated with the isotropic simple algebraic groups $\G$ over non-Archimedean local fields $k$, and so of algebraic origin.

\begin{theorem}
\label{thm::main3}
Let $\Delta$ be an irreducible locally finite thick affine building of dimension $\geq 2$ such that $\Aut_{0}(\Delta)$ acts strongly transitively on $\Delta$. Then the only topologically simple closed and strongly transitive subgroup of $\Aut_{0}(\Delta)$ is the group $G^{+}$ associated with an isotropic simple algebraic group $\G$  over a non-Archimedean local field $k$.
\end{theorem}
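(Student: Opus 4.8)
The plan is to combine Theorem~\ref{main_thm2} and Theorem~\ref{main_thm22} to identify $H$ with an algebraic group, and then invoke the Tits classification to pin it down to $G^{+}$. First, let $H \leq \Aut_{0}(\Delta)$ be a topologically simple, closed and strongly transitive subgroup. By Theorem~\ref{main_thm2}, $H$ is Moufang, so for every root $\alpha$ of $\partial_{\infty}\Delta$ the root group $U_{\alpha}(H)$ acts transitively on $\mathcal{A}_{\partial_{\infty}\Delta}(\alpha)$. Since $\Delta$ is irreducible of dimension $\geq 2$, the building at infinity $\partial_{\infty}\Delta$ is an irreducible spherical building of dimension $\geq 1$; the Moufang property of $H$ forces $\partial_{\infty}\Delta$ to be Moufang in the sense of \cite[Definition 7.27]{AB}, and in fact of dimension $\geq 2$ (the rank-$2$ case being excluded for affine buildings of dimension $\geq 2$ coming from an irreducible $\Delta$, since the dimension of $\partial_\infty\Delta$ equals $\dim\Delta - 1 \ge 1$, and irreducibility together with the affine Moufang condition promotes it). Then Theorem~\ref{main_thm22} applies: $\Delta$ is the Bruhat--Tits building of $\G(k)$ for some isotropic simple algebraic group $\G$ over a non-Archimedean local field $k$, of $k$-rank $\geq 2$.

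Next I would compare $H$ with $G := \G(k)$ acting on the same building $\Delta$. Both $H$ and $G$ act strongly transitively and type-preservingly on $\Delta$, hence both contain, for each root $\alpha$ of $\partial_{\infty}\Delta$, a root group acting transitively on $\mathcal{A}_{\partial_{\infty}\Delta}(\alpha)$. The key point is that the root groups are intrinsic: by the Moufang property of the spherical building $\partial_{\infty}\Delta$, the action of the full automorphism group on the system of root groups is rigid (Tits' uniqueness for Moufang spherical buildings of rank $\geq 3$, \cite[Section 5.8]{Tits74}), so $U_{\alpha}(H) = U_{\alpha}(G)$ for every $\alpha$ — each being the unique group of the building's automorphisms effecting the transitive action fixing the appropriate stars. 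Consequently $H \supseteq \langle U_{\alpha}(G) \mid \alpha \rangle = G^{+}$, and conversely $H^{+} = \langle U_{\alpha}(H)\mid\alpha\rangle = G^{+}$. Since $G^{+}$ is a closed normal subgroup of $G$ with $G/G^{+}$ compact, and $G^{+}$ is known to be abstractly simple modulo its center (here it is already topologically simple), $G^{+}$ is a closed normal subgroup of $H$.

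Now I would use topological simplicity of $H$: the normal subgroup $G^{+} \trianglelefteq H$ is nontrivial (it acts nontrivially on $\Delta$, indeed strongly transitively), and it is closed, so $G^{+} = H$. This gives $H = G^{+} = \G(k)^{+}$, and by \cite[Section 5.8]{Tits74} the building $\partial_{\infty}\Delta$ (equivalently $\Delta$) determines $k$ and $G^{+}$ up to isomorphism and $\G$ up to special isogeny, so the conclusion does not depend on the choices made. It remains to observe that no other topologically simple closed strongly transitive subgroup can exist: any such subgroup is forced through the same chain of implications to equal $G^{+}$ for the $\G$ and $k$ attached to $\Delta$ by Theorem~\ref{main_thm22}, which are uniquely determined by $\Delta$.

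The main obstacle I anticipate is the rigidity step identifying $U_{\alpha}(H)$ with $U_{\alpha}(G)$ and hence $H^{+}$ with $G^{+}$: one must be careful that the root groups defined combinatorially inside $H$ (fixing stars of panels in $\alpha - \partial\alpha$ of $\partial_\infty\Delta$) genuinely coincide with the algebraic root subgroups, and that strong transitivity of $H$ on $\Delta$ — rather than merely an abstract Moufang structure on $\partial_\infty\Delta$ — is enough to force this. This requires invoking that automorphisms of a thick irreducible affine building of dimension $\ge 2$ are determined by their action at infinity together with a vertex-stabilizer datum (the building is uniquely recovered from the Moufang spherical building at infinity together with the valuation, by Bruhat--Tits theory and \cite[Chapter 28]{Wi}), and that the root groups at infinity lift uniquely. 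A secondary subtlety is verifying that $\partial_\infty\Delta$ is irreducible and of rank $\ge 3$ so that Tits' strong rigidity for spherical buildings applies; for rank exactly $3$ (i.e. $\dim\Delta = 2$) one still has the classification, but the argument must cite the appropriate form of the uniqueness statement.
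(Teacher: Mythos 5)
Your overall architecture coincides with the paper's: apply Theorem~\ref{main_thm2} to make $H$ and $\Aut_0(\Delta)$ Moufang, invoke Theorem~\ref{main_thm22} to realise $\Delta$ as the Bruhat--Tits building of some $\G(k)$, identify the root groups of $H$ with those of $G=\G(k)$, and finish by topological simplicity. However, the step you yourself flag as the main obstacle --- the literal equality $U_{\alpha}(H)=U_{\alpha}(G)$ inside $\Aut_0(\Delta)$ --- is not actually closed by the argument you offer. Tits' rigidity from \cite[Section 5.8]{Tits74} tells you that the Moufang spherical building $\partial_\infty\Delta$ determines $k$, $G^{+}$ and $\G$ up to isomorphism and special isogeny; it is a uniqueness statement for the abstract root datum, not a statement that two transitive root groups sitting inside the same automorphism group must coincide as subgroups. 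The mechanism the paper uses is much more elementary and is already available to you: by Corollary~\ref{cor::simply_tran} (which rests on the rigidity Theorem~\ref{thm::rigidity} together with irreducibility of $\Delta$, so that some chamber of $\alpha$ has no panel on $\partial\alpha$), \emph{any} root group acting transitively on $\mathcal{A}_{\partial_\infty\Delta}(\alpha)$ acts simply transitively; since $U_{\alpha}(H)\leq U_{\alpha}(\Aut_0(\Delta))$ and both are simply transitive on the same set, they are equal (Corollary~\ref{cor::equal_root_groups}). This is the missing idea; without it your chain $H\supseteq\langle U_{\alpha}(G)\mid\alpha\rangle=G^{+}$ has no foundation.

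Two further points need repair. First, your deduction that $G^{+}$ is normal in $H$ ``since $G^{+}$ is abstractly simple modulo its center'' is a non sequitur: normality in $G$ and internal simplicity say nothing about normality in the a priori unrelated group $H$. The paper proves normality of $\overline{\langle U^{+},U^{-}\rangle}$ in $H$ in Proposition~\ref{prop::normal_U_pm}, using the generation $H=\langle H_{c_+}^{0},H_{c_-}^{0}\rangle$ from \cite[Prop.\ 4.11]{Cio} and the Levi-type decomposition $B(H)^{\pm}=U^{\pm}\rtimes M(H)$; alternatively, once you know $H^{+}=\langle U_{\alpha}(H)\mid\alpha\rangle$, normality of $H^{+}$ in $H$ follows from $hU_{\alpha}(H)h^{-1}=U_{h\alpha}(H)$. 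Second, you assert $\langle U_{\alpha}(G)\mid\alpha\rangle=G^{+}$ as if it were definitional; to match the combinatorially generated group with the algebraic $G^{+}$ the paper argues that $\overline{\langle U^{+},U^{-}\rangle}$ is a non-central closed subgroup normalised by $G^{+}$ and invokes \cite[Theorem 1.5.6]{Ma} to get $G^{+}\subseteq\overline{\langle U^{+},U^{-}\rangle}$, with the reverse inclusion coming from $G^{+}$ being closed and normal in $G$. With these three repairs your proof becomes the paper's proof.
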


The third result of the article concerns the Howe--Moore property \cite{HM79} and we generalize the proof given for the case of bi-regular trees \cite{BM00b} to any locally finite thick affine building $\Delta$. By Theorem \ref{thm::main3}, for an irreducible $\Delta$, we do not obtain new examples of groups having the Howe--Moore property. Still, the proof given in this article is different than the strategy used so far in the literature (see for example \cite{Cio}) and does not relay on the polar decomposition $KA^+K$, where $K$ is a maximal compact subgroup, and the important fact that $A^+$ is an \textit{abelian} maximal sub-semi-group.

The Howe--Moore property \cite{HM79} is a harmonic analytic feature of a locally compact group that has far-reaching applications to many areas of Mathematics, such as homogeneous dynamics, geometry, or number theory. To list some, it has notable consequences regarding discrete subgroups of semi-simple Lie groups, Mostow's rigidity theorem, Ratner's theorems on unipotent flows, or mixing and equidistribution results. 

So far, all known examples of locally compact groups having the Howe--Moore property are:  
\begin{enumerate}
\item
connected, non-compact, simple real Lie groups, with finite center \cite{HM79}, e.g., $\SL(n, \RR)$
\item
isotropic simple algebraic groups over non Archimedean local fields \cite{HM79}, e.g., $\SL(n,\QQ_p)$
\item
 closed, topologically simple subgroups of $\Aut(T)$ with a $2$-transitive action on the boundary of a bi-regular tree $T$, that has valence $\geq 3$ at every vertex, \cite{BM00b}, e.g., the universal group $U(F)^+$ of Burger--Mozes, when F is $2$-transitive.  
 \end{enumerate}
 
 An important fact the above mentioned groups have in common is their polar decomposition $KA^+K$, where $K$ is a maximal compact subgroup and $A^+$ is an abelian maximal sub-semi-group, the latter being crucial in the proof to imply the Howe--Moore property. This was employed in \cite{Cio}  to give a unified proof for all these above  examples.
 
For a historical introduction and basic definitions regarding the Howe--Moore property one can consult \cite[Introduction and Section 2]{Cio}. For completeness, we recall here just some of the very basic notions.

Let $G$ be a locally compact group and $(\pi, \mathcal{H})$ a (strongly continuous) unitary representation  $\pi :G \to \mathcal{U}(\mathcal{H}) $ on a (infinite dimensional) complex Hilbert space $(\mathcal{H}, \left\langle \cdot , \cdot \right\rangle)$. For $v,w \in \mathcal{H}$ the associated \textbf{$(v,w)$-matrix coefficient} is the map $c_{v,w} : G \to \mathbb{C}$ given by $c_{v,w}(g):=\left\langle \pi(g)v ,w \right\rangle$. We say $c_{v,w}$ \textbf{vanishes at infinity} if, for every $\epsilon >0$, the subset $\{g \in G \; \vert \; \vert c_{v,w}(g)\vert \geq \epsilon \}$ is compact in $G$; equivalently,  $\lim\limits_{g \to \infty}  c_{v,w}(g)  =0$, where $\infty$ represents the one-point compactification of the locally compact group $G$.

\begin{definition}
\label{def::HM}
A locally compact group $G$ has \textbf{the Howe--Moore property} if,  any unitary representation of $G$ that is without non-zero $G$--invariant vectors is $C_0$ (i.e. all its matrix coefficients vanish at infinity).
\end{definition}

\begin{theorem}
\label{thm::main}
Let $\Delta$ be a locally finite thick affine building and $G$ a topologically simple, closed, strongly transitive and type-preserving subgroup of $\Aut(\Delta)$. Then $G$ has the Howe--Moore property.
\end{theorem}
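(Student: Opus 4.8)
The plan is to reduce the statement to a concrete building-theoretic mixing property by adapting the Burger--Mozes tree argument. First I would set up notation: fix a special vertex $x_0$ of $\Delta$, let $K=\Stab_G(x_0)$, which is a compact open subgroup since $\Delta$ is locally finite and $G$ is closed. Strong transitivity of $G$ gives a Cartan-type decomposition indexed by the simplicial cone of special vertices: $G=\bigsqcup_{\lambda} K g_\lambda K$, where $g_\lambda$ translates $x_0$ to a vertex at "combinatorial distance" $\lambda$ in a fixed sector. Since $G$ is non-compact (as noted after Theorem \ref{main_thm22}, via \cite{CM}), going to infinity in $G$ forces the $K$-invariant piece labels $\lambda$ to leave every finite set. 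The goal is then to show that for any unitary representation $(\pi,\mathcal H)$ without invariant vectors and any $K$-finite vectors $v,w$, the matrix coefficient $\langle\pi(g)v,w\rangle$ tends to $0$ as $g\to\infty$; the general case follows by density of $K$-finite vectors and a standard $\epsilon/3$ approximation, using that $K$ is compact so $K\cdot v$ spans a finite-dimensional space for $v$ in a dense set.

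The heart of the argument is a "geometric contraction" statement replacing the abelian-$A^+$ input: if $g_n\to\infty$ in $G$, then after passing to a subsequence one can find, using Theorem \ref{main_thm2} (so $G$ is Moufang, and by Theorem \ref{main_thm22}, $G^+=\G(k)^+$ when $\Delta$ is irreducible — but we want a proof not invoking the algebraic structure), a pair of opposite root groups $U_\alpha(G), U_{-\alpha}(G)$ and elements $u_n\in U_{\alpha}$, $u_n'\in U_{-\alpha}$ with $u_n, u_n' \to 1$ such that $g_n^{-1} u_n g_n \to 1$ and $g_n u_n' g_n^{-1}\to 1$ — i.e. $g_n$ conjugates some neighborhood of the identity in $U_{\alpha}$ (resp. $U_{-\alpha}$) into arbitrarily small neighborhoods. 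Concretely, because $g_n$ moves $x_0$ far into a sector, it fixes more and more of the associated half-apartments $\alpha$ pointwise near the boundary wall, which is exactly the defining condition that makes the root-group elements it conjugates shrink. This is the step I expect to be the main obstacle: making precise, purely in terms of the affine building $\Delta$ and its spherical building $\partial_\infty\Delta$, which root $\alpha$ to pick for a given escaping sequence, and verifying the two contraction limits simultaneously; in the tree case this is the elementary statement that a hyperbolic translation contracts one horocyclic direction and expands the other, and the building version must track a Weyl-chamber direction and the (finite) set of roots positive on it.

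Granting the contraction step, I would finish as in \cite{BM00b}: let $v,w$ be $K$-finite, suppose for contradiction $|\langle\pi(g_n)v,w\rangle|\geq\epsilon$ along some $g_n\to\infty$. By weak compactness of the unit ball, after a subsequence $\pi(g_n)v\rightharpoonup v_\infty$ weakly with $\langle v_\infty,w\rangle\neq 0$, so $v_\infty\neq 0$. For $u\in U_{\alpha}(G)$ near $1$, write $\pi(u)\pi(g_n)v=\pi(g_n)\pi(g_n^{-1}ug_n)v$; since $g_n^{-1}ug_n\to 1$ and $\pi$ is strongly continuous, $\pi(g_n^{-1}ug_n)v\to v$ in norm, hence $\pi(u)\pi(g_n)v - \pi(g_n)v\to 0$ weakly, giving $\pi(u)v_\infty=v_\infty$ for all such $u$, and then for all $u\in U_{\alpha}(G)$ since these $u$ generate $U_\alpha(G)$ (a neighborhood of $1$ generates the whole group as $U_\alpha$ is, one checks, either compact-by-discrete or topologically generated by any identity neighborhood; in fact in a Moufang building of this type $U_\alpha(G)$ is closed and its identity neighborhoods generate it). Symmetrically, replacing $g_n$ by $g_n^{-1}$ and $v$ by $w$ (or running the same argument on $\langle v,\pi(g_n^{-1})w\rangle$), $v_\infty$ is fixed by $U_{-\alpha}(G)$ as well. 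But $\langle U_\alpha(G), U_{-\alpha}(G)\rangle$ contains a non-trivial normal subgroup structure forcing — via strong transitivity and the action on $\partial_\infty\Delta$ — that the closed subgroup fixing $v_\infty$ is all of $G$ (here one uses that $G$ is topologically simple and generated, up to the compact $G/G^+$, by its root groups, so the fixator of $v_\infty$ being a closed subgroup containing two opposite root groups must be $G$ itself). Then $v_\infty$ is a non-zero $G$-invariant vector, contradicting the hypothesis. This contradiction proves every matrix coefficient of a representation without invariant vectors vanishes at infinity, i.e. $G$ has the Howe--Moore property. The only delicate points beyond the contraction lemma are the density/approximation reduction to $K$-finite vectors (routine, $K$ compact open) and the final "two opposite root groups generate enough of $G$" claim, which I would deduce from Theorem \ref{main_thm2} together with topological simplicity of $G$.
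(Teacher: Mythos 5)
Your overall architecture (escape to infinity along a subsequence, weak limit of $\pi(g_n)v$, invariance of the limit under a contracted subgroup, then topological simplicity) is the classical contraction-group scheme, but it breaks at the step you label ``symmetrically''. From $g_n^{-1}ug_n\to e$ for $u\in U_\alpha(G)$ you correctly get that the weak limit $v_\infty$ of $\pi(g_n)v$ is $U_\alpha(G)$-invariant. But for $u'\in U_{-\alpha}(G)$ you only know $g_nu'g_n^{-1}\to e$, so the identity $\pi(u')\pi(g_n)v=\pi(g_n)\pi(g_n^{-1}u'g_n)v$ involves the \emph{expanded} conjugates $g_n^{-1}u'g_n$, which escape to infinity; running the argument on $\langle v,\pi(g_n^{-1})w\rangle$ instead shows that the weak limit $w_\infty$ of $\pi(g_n^{-1})w$ is $U_{-\alpha}(G)$-invariant. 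These are two \emph{different} vectors, and nothing in your proposal produces a single non-zero vector invariant under both opposite directions. Closing this gap is precisely where the known proofs diverge: one either uses the spectral theory of the abelian $A^+$ (the route the paper deliberately avoids), or first produces a vector invariant under a hyperbolic element and only then applies Mautner, or uses the Burger--Mozes continuity-of-double-cosets trick. The paper takes the last route: Proposition \ref{prop:BM} gives a vector $w$ whose stabilizer $L$ has finite index in the much larger group $G^0_{\st(\sigma,\partial_\infty\mathcal A)}$; Lemmas \ref{lem::sequence_elliptic_elements} and \ref{lem::hyper_elements} then produce elliptic elements $k_n\to e$ whose double cosets $Lk_nL$ contain hyperbolic elements of prescribed translation length, and continuity of $g\mapsto\langle\pi(g)w,w\rangle$ forces $w$ to be invariant under a hyperbolic $a\in A$; only then does Mautner yield invariance under $U_a^{\pm}$ and simplicity finish the proof. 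Your proposal contains no analogue of this bridge.

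A second, independent problem: even if a single $v_\infty$ were invariant under one pair $U_\alpha(G),U_{-\alpha}(G)$ of opposite root groups, in rank $\geq 2$ the closed subgroup these generate is a rank-one subgroup, neither normal nor dense, so topological simplicity does not apply; one needs invariance under the root groups $U_\beta$ for $\beta$ ranging over a full system of roots of an apartment, as in Proposition \ref{prop::normal_U_pm}. Moreover, when the limit point $\xi$ of $a_n(x_0)$ lies in the interior of a singular simplex $\sigma$ (not a chamber), the contraction group of the sequence omits exactly the root groups whose walls contain the direction of $\xi$, so the contracted directions alone never account for a full root system; the paper's introduction of $G^0_{\st(\sigma,\partial_\infty\mathcal A)}$ and its use of the Bruhat decomposition are designed to get around exactly this. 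The reduction to $K$-finite vectors and the initial Cartan-type decomposition are fine, but as written the proof does not go through.
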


\medskip
The two key ideas behind the proof of Theorem \ref{thm::main} are the use of the well known Bruhat  decomposition of a strongly transitive group $G \leq \Aut(\Delta)$ (see Section \ref{sec::main_thm}), and the closed subgroup $G_{\st(\sigma,\partial_{\infty} \mathcal{B})}^{0}$ introduced in Definition \ref{def::st_cone_N}. These allow us to get rid of the so-far-used polar decomposition $KA^+K$ and its \textit{abelian} sub-semi-group $A^+$ techniques.

In the case of a locally finite thick affine building $\Delta$ we expect the Howe--Moore property of a closed, topologically simple and type-preserving $G \leq \Aut(\Delta)$ to be equivalent to the strongly transitive action of $G$ on $\Delta$. Recall, in the case of bi-regular trees, strong transitivity is equivalent to $2$-transitivity. Moreover, it appears that among locally finite thick buildings (so of any Coxeter type), only the affine case provides examples of groups enjoying the Howe--Moore property.  Both of the above mentioned expectations will be studied in a future research project.

\subsection*{Acknowledgements} Firstly, I would like to thank Tobias Hartnick for pushing me to prove Theorems \ref{main_thm2} and \ref{thm::main3} during our research visit at the Institut des Hautes \'{E}tudes Scientifiques (IHES) in Bures-sur-Yvette, Paris, and for very illuminating conversations. As well, I am thankful to Stefan Witzel for pointing out the articles \cite{CM, CM1} and for very useful discussions. The first part of the paper was written down and finalised during my research visit at IHES and at the Institute of Mathematics of the Romanian Academy (IMAR), Bucharest. I would like to thank those two institutions for the perfect working conditions they provide. 

\section{From strong transitivity to Moufang's condition}


Let us recall a fundamental Rigidity Theorem of Tits. To state this, let $X$ be a building and $c$ a chamber of $X$. We denote by $E_{1}(c)$ the set of all chambers of $X$ that are adjacent to $c$. In what follows we denote by $\Delta$ an irreducible locally finite thick affine building of dimension $\geq 2$, if not stated otherwise.

\begin{theorem}(see \cite[Theorem 5.205]{AB})
\label{thm::rigidity}
Let $X$ be a thick spherical building, and let $c$ and $c'$ be opposite chambers in $X$. If an automorphism $\phi$ of $X$ fixes $E_{1}(c) \cup \{c'\}$ pointwise, then $\phi$ is the identity.
\end{theorem}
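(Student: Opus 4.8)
The plan is to argue by induction on the rank $n$ of $X$, reducing the passage $n\to n-1$ to the links of the vertices of $c$, with only the base cases $n=1$ (trivial, since all chambers are pairwise opposite and $E_{1}(c)$ is everything else) and $n=2$ (generalized polygons) needing a separate, direct treatment.

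\emph{Immediate reductions and a fixed apartment.} Since $\phi$ fixes every chamber of $E_{1}(c)$ pointwise, it fixes pointwise every panel of $c$ (each such panel is $c\cap d$ with $d\in E_{1}(c)$), hence fixes $c$ pointwise, and it fixes $\st_{X}(P)$ pointwise for every panel $P$ of $c$. Next, as $c$ and $c'$ are opposite in the spherical building $X$, they lie in a unique common apartment $\Sigma$, which is the combinatorial convex hull of $\{c,c'\}$; since $\phi$ fixes $c$ and $c'$, it preserves $\Sigma$, and $\phi|_{\Sigma}$ is an automorphism of the thin complex $\Sigma$ fixing $c$ and all of its neighbours in $\Sigma$ pointwise. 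An easy induction on Weyl length (in a thin complex a chamber has a unique neighbour across each panel) forces $\phi|_{\Sigma}=\id$; thus $\phi$ fixes $\Sigma$ pointwise.

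\emph{The inductive step.} Let $n\geq 3$, assume the statement in rank $n-1$, and fix a vertex $v$ of $c$. In the link $X_{v}=\mathrm{lk}_{X}(v)$, a thick spherical building of rank $n-1$, the automorphism induced by $\phi$ fixes pointwise the chamber $\bar c:=c\setminus\{v\}$ and all chambers of $X_v$ adjacent to it (these arise from $E_{1}(c)$); moreover $\mathrm{lk}_{\Sigma}(v)$ is an apartment of $X_{v}$ that $\phi$ fixes pointwise, and since in a spherical building every chamber is opposite some chamber of any prescribed apartment, this apartment contains a chamber $\bar d$ opposite $\bar c$, which $\phi$ then fixes. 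Applying the rank $n-1$ case of the theorem in $X_{v}$ to $\bar c,\bar d$ gives $\phi|_{X_{v}}=\id$, i.e. $\phi$ fixes $\st_{X}(v)$ pointwise. Running this over all vertices $v$ of $c$, $\phi$ fixes $\st_{X}(v)$ pointwise for every $v\in c$; and exactly the same argument applies to an arbitrary chamber $c''$ once we know that $\phi$ fixes $E_{1}(c'')$ pointwise together with one chamber of $\Sigma$ opposite $c''$ (such a chamber exists by the same opposition fact, and is fixed since $\Sigma$ is fixed pointwise).

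\emph{Spreading and the main obstacle.} If $c''$ is adjacent to a chamber $\tilde c$ for which $\phi$ already fixes $\st_{X}(u)$ pointwise for every vertex $u$ of $\tilde c$, then $c''$ shares a panel, hence $n-1\geq 2$ vertices, with $\tilde c$; since (using $n\geq 3$) every panel of $c''$ then still contains one of those vertices, $\phi$ fixes $E_{1}(c'')$ and $c''$ pointwise, and by the previous paragraph $\phi$ fixes $\st_{X}(u)$ pointwise for every $u\in c''$. Starting from $c$ and using connectedness of the chamber graph, $\phi$ fixes every chamber pointwise, so $\phi=\id$. The two delicate points are the base case $n=2$, which must be handled directly in the incidence geometry of generalized polygons, and the constant need to upgrade ``$\phi$ fixes a chamber'' to ``$\phi$ fixes it pointwise'' — it is this that forces us to carry around the vertex stars $\st_{X}(v)$ rather than single chambers, and is why the spreading step genuinely requires $n\geq 3$.
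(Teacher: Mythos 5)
First, note that the paper does not prove this statement at all: it is quoted verbatim from \cite[Theorem 5.205]{AB} and used as a black box, so there is no internal proof to compare yours against. Judged on its own, your architecture (induction on the rank via links of the vertices of $c$, plus the spreading step through the chamber graph) is sound for $n\geq 3$: the reductions, the fact that $\mathrm{lk}_{\Sigma}(v)$ is an apartment of the link containing a chamber opposite $\bar c$, and the observation that for $n\geq 3$ every panel of a chamber adjacent to $\tilde c$ meets the common panel with $\tilde c$, are all correct.

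The genuine gap is the base case $n=2$, which you announce but never prove. It cannot be waved away: since the links of vertices of a rank-$n$ building have rank $n-1$, every instance of your induction terminates in a rank-$2$ residue, so the entire proof rests on the rigidity statement for thick generalized $m$-gons — and, as you yourself note, both your link reduction (which would land in rank $1$, where it only recovers the hypothesis that $E_1(c)$ is fixed) and your spreading step (a panel of $c''$ may miss the vertex shared with $\tilde c$ when $n=2$) break down there. The rank-$2$ case is not a formality; for $m\geq 3$ it requires a genuine argument with the girth/diameter properties of the incidence graph, typically via the perspectivity bijections between the neighbour sets of opposite vertices. This is essentially the same mechanism as in the standard proof of \cite[Theorem 5.205]{AB}, which is uniform in the rank: one first shows that $\phi$ fixes $E_1(c')$ pointwise because $\proj_{P}$ and $\proj_{P'}$ are mutually inverse bijections between the chamber sets of opposite panels $P\subset c$, $P'\subset c'$ and commute with $\phi$, and then propagates through the apartments containing $c$, which are parametrized by the chambers opposite $c$. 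If you supplied that projection argument you would in fact no longer need the rank induction at all; as written, the proposal defers all of the substance of the theorem to precisely the case it leaves untreated, so it is incomplete.
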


From Theorem \ref{thm::rigidity} one can deduce simple-transitivity for a root group $U_{\alpha}$ when it enjoys the Moufang property. More precisely:

\begin{corollary}(see \cite[Lemma 7.25(3)]{AB})
\label{cor::simply_tran}
Let $H$ be a closed subgroup of $\Aut_{0}(\Delta)$ and  $\alpha$ any root of $\partial_{\infty} \Delta$. Suppose the root group $U_{\alpha}(H)$  acts transitively on $\mathcal{A}_{\partial_{\infty} \Delta}(\alpha)$. Then the action of $U_{\alpha}(H)$ on $\mathcal{A}_{\partial_{\infty} \Delta}(\alpha)$ is simple-transitive; i.e., for every $u \in U_{\alpha}(H) - \{id\}$ the action of $u$ has no fixed points on $\mathcal{A}_{\partial_{\infty} \Delta}(\alpha)$.
\end{corollary}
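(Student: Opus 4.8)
The claim is that the action of $U_{\alpha}(H)$ on $\mathcal{A}_{\partial_{\infty}\Delta}(\alpha)$, already assumed transitive, is in fact free; the two together give the asserted ``simple-transitivity'', which is precisely the statement that no $u\in U_{\alpha}(H)-\{\id\}$ fixes a point. So the plan is to prove: if $u\in U_{\alpha}(H)$ satisfies $u(\mathcal{A}_{0})=\mathcal{A}_{0}$ for some $\mathcal{A}_{0}\in\mathcal{A}_{\partial_{\infty}\Delta}(\alpha)$, then $u$ is trivial on $\partial_{\infty}\Delta$. The tool is Tits' Rigidity Theorem (Theorem \ref{thm::rigidity}) applied to the thick spherical building $\partial_{\infty}\Delta$: it suffices to exhibit a chamber $c$ and an opposite chamber $c'$ such that $u$ fixes $E_{1}(c)\cup\{c'\}$ pointwise.

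By definition $u$ fixes $\st_{\partial_{\infty}\Delta}(P)$ pointwise for every panel $P$ of $\partial_{\infty}\Delta$ with $P\subseteq\alpha-\partial\alpha$; since $\partial_{\infty}\Delta$ has rank $\geq 2$, every chamber of $\alpha$ has a panel in $\alpha-\partial\alpha$, so $u$ already fixes $\alpha$ pointwise. I would then note that $u$ fixes $\mathcal{A}_{0}$ pointwise: $\mathcal{A}_{0}$ contains a chamber $C\subseteq\alpha$, which $u$ fixes vertex-by-vertex, and since $u(\mathcal{A}_{0})=\mathcal{A}_{0}$ the restriction $u|_{\mathcal{A}_{0}}$ is an automorphism of the thin chamber complex $\mathcal{A}_{0}$ fixing a chamber pointwise, hence the identity (extend along galleries). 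Next, choose a chamber $c\subseteq\alpha$ that is \emph{not} adjacent to the wall $\partial\alpha$, i.e.\ all of whose panels lie in $\alpha-\partial\alpha$; such a $c$ exists because $\partial_{\infty}\Delta$ is an \emph{irreducible} spherical building of rank $\geq 2$, so no root is a single chamber-layer around its wall. Let $c'$ be the chamber of $\mathcal{A}_{0}$ opposite to $c$ inside $\mathcal{A}_{0}$; then $c'$ is opposite to $c$ in $\partial_{\infty}\Delta$ as well, opposition being detectable inside any apartment containing both chambers. By the choice of $c$, every chamber adjacent to $c$ lies in $\st_{\partial_{\infty}\Delta}(P)$ for some panel $P$ of $c$ with $P\subseteq\alpha-\partial\alpha$, so $u$ fixes $E_{1}(c)$ pointwise; and $u$ fixes $c'$ pointwise since $c'\subseteq\mathcal{A}_{0}$. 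Hence $u$ fixes $E_{1}(c)\cup\{c'\}$ pointwise, and Theorem \ref{thm::rigidity} gives $u=\id$ on $\partial_{\infty}\Delta$.

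The only point that is not purely formal is the existence of the ``deep'' chamber $c$: in an irreducible spherical Coxeter complex of rank $\geq 2$ every root $\alpha$ contains a chamber none of whose panels lie on $\partial\alpha$. I expect this to be the main (though routine) obstacle. It follows from a short count -- the number of chambers of $\alpha$ adjacent to $\partial\alpha$ equals the order of the rank-$(\mathrm{rk}-1)$ Coxeter group of the wall, which for an irreducible $W$ of rank $\geq 2$ is a proper subgroup of index $>2$ and so cannot exhaust $\alpha$ -- or simply by inspection of the finitely many diagrams $A_{n},B_{n},C_{n},D_{n},E_{n},F_{4},G_{2}$ that occur as $\partial_{\infty}$ of an irreducible affine building of dimension $\geq 2$. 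Alternatively one may take $c$ adjacent to the wall and propagate the fixed-point information outward using thickness, which avoids the combinatorial remark at the cost of a less direct argument. Note finally that the conclusion ``$u$ acts trivially on $\partial_{\infty}\Delta$'' is exactly what every later application of this corollary needs.
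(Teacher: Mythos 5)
Your proof is correct and follows essentially the same route as the paper's: both reduce to Tits' rigidity theorem by producing a chamber $c$ of $\alpha$ with no panel on $\partial\alpha$ (which exists by irreducibility; the paper simply cites \cite[Example 3.128]{AB} for this) and pairing it with the opposite chamber $c'$ in the fixed apartment. Your explicit justification that a stabilized apartment containing a pointwise-fixed chamber is fixed pointwise fills in a step the paper compresses into an ``i.e.'', but the argument is the same.
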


\begin{proof}
Suppose there is an element $u \in U_{\alpha}(H) - \{id\}$ that fixes an element of $\mathcal{A}_{\partial_{\infty} \Delta}(\alpha)$, i.e., $u$ fixes pointwise at least one apartment $\Sigma$ of $\partial_{\infty} \Delta$ that contains $\alpha$ as a half-apartment. 
As the affine building $\Delta$ is irreducible, its Coxeter diagram has no isolated nodes, and thus also the one of the spherical building $\partial_{\infty} \Delta$. Then by \cite[Example 3.128]{AB} there is an ideal chamber $c \in \Ch(\alpha)$ having no panel in $\partial \alpha$. Then by the definition of $U_{\alpha}(H)$, the element $u$ fixes $E_{1}(c) \subset \partial_{\infty} \Delta$ and the apartment $\Sigma$, and thus the ideal chamber $c'$ of $\Sigma$ opposite $c$.  By Theorem \ref{thm::rigidity}, as $u$ is also an automorphism of $\partial_{\infty} \Delta$, we must have $u\vert_{\partial_{\infty} \Delta}=\id$, given a contradiction with our assumption. 
\end{proof}

As written in \cite[Remark 7.26]{AB}, the assumption that the Coxeter diagram has no isolated nodes (e.g., the irreducibility condition from Corollary \ref{cor::simply_tran}) is crucial and cannot be removed. 

\begin{corollary}
\label{cor::equal_root_groups}
Let $\alpha$ be any root of $\partial_{\infty} \Delta$ and $H$ a closed subgroup of $\Aut_{0}(\Delta)$. Suppose both root groups $U_{\alpha}(H)$ and $U_{\alpha}(\Aut_{0}(\Delta))$ act transitively on $\mathcal{A}_{\partial_{\infty} \Delta}(\alpha)$. Then $U_{\alpha}(\Aut_{0}(\Delta))= U_{\alpha}(H)$.
\end{corollary}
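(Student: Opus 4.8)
The plan is a straightforward orbit--stabilizer argument resting on Corollary \ref{cor::simply_tran}. First I would record the trivial inclusion $U_{\alpha}(H) \subseteq U_{\alpha}(\Aut_{0}(\Delta))$: since $H \leq \Aut_{0}(\Delta)$ and both root groups are cut out by the very same fixing condition (fixing $\st_{\partial_{\infty} \Delta}(P)$ pointwise for every panel $P$ in $\alpha - \partial \alpha$), every element of $U_{\alpha}(H)$ automatically lies in $U_{\alpha}(\Aut_{0}(\Delta))$. Thus the content of the statement is the reverse inclusion.

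Next I would upgrade both transitivity hypotheses to simple-transitivity. Since $\Delta$ is irreducible (a standing assumption in this section) and, by hypothesis, both $U_{\alpha}(H)$ and $U_{\alpha}(\Aut_{0}(\Delta))$ act transitively on $\mathcal{A}_{\partial_{\infty} \Delta}(\alpha)$, Corollary \ref{cor::simply_tran} applies to each of them. Hence no nontrivial element of either root group fixes any apartment belonging to $\mathcal{A}_{\partial_{\infty} \Delta}(\alpha)$.

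Then I would run the orbit argument. Fix an apartment $\Sigma \in \mathcal{A}_{\partial_{\infty} \Delta}(\alpha)$ and let $g \in U_{\alpha}(\Aut_{0}(\Delta))$ be arbitrary. Since $g$ fixes $\alpha$, its image $g(\Sigma)$ again lies in $\mathcal{A}_{\partial_{\infty} \Delta}(\alpha)$, so by transitivity of $U_{\alpha}(H)$ there is $u \in U_{\alpha}(H)$ with $u(\Sigma) = g(\Sigma)$. Because $U_{\alpha}(H) \subseteq U_{\alpha}(\Aut_{0}(\Delta))$ and the latter is a group, the element $u^{-1} g$ lies in $U_{\alpha}(\Aut_{0}(\Delta))$, and it fixes the apartment $\Sigma$. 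Simple-transitivity of $U_{\alpha}(\Aut_{0}(\Delta))$ then forces $u^{-1} g = \id$, whence $g = u \in U_{\alpha}(H)$. This yields $U_{\alpha}(\Aut_{0}(\Delta)) \subseteq U_{\alpha}(H)$, hence the desired equality.

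I do not expect a genuine obstacle here; the argument is purely formal once simple-transitivity is in hand. The only point demanding a little care is that Corollary \ref{cor::simply_tran} must be invoked for the ambient group $\Aut_{0}(\Delta)$ as well as for $H$, which is legitimate precisely because the statement of Corollary \ref{cor::equal_root_groups} assumes that \emph{both} root groups act transitively on $\mathcal{A}_{\partial_{\infty} \Delta}(\alpha)$.
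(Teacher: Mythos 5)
Your proposal is correct and follows exactly the paper's route: the trivial inclusion $U_{\alpha}(H)\leq U_{\alpha}(\Aut_{0}(\Delta))$ plus simple-transitivity of both groups via Corollary \ref{cor::simply_tran}, with the paper merely leaving implicit the orbit--stabilizer step you spell out. No gaps.
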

\begin{proof}
It is obvious that $U_{\alpha}(H) \leq U_{\alpha}(\Aut_{0}(\Delta))$. By Corollary \ref{cor::simply_tran} we have that both groups  $U_{\alpha}(H), U_{\alpha}(\Aut_{0}(\Delta))$ act simple-transitively on $\mathcal{A}_{\partial_{\infty} \Delta}(\alpha)$ and so they must be equal.
\end{proof}

\begin{lemma}
\label{lem::cocompact1}
Let $\Delta$ be a locally finite thick affine building. Let $H$ be a closed subgroup of $\Aut_{0}(\Delta)$ that acts strongly transitively on $\Delta$. Then for every $\xi \in \partial_{\infty} \Delta$ the group $H_{\xi}$ acts cocompactly on $\Delta$ and transitively on the set $\Opp(\xi) \subset \partial_{\infty} \Delta$ of all ideal points opposite $\xi$.
\end{lemma}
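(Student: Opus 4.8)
The plan is to reduce everything to a single transitivity statement: that $H_{\xi}$ acts transitively on the set $\mathsf{Ap}(\xi)$ of apartments $\mathcal{A}$ of $\Delta$ with $\xi \in \partial_{\infty}\mathcal{A}$. Two easy consequences of strong transitivity will be used repeatedly. First, for any two apartments $\mathcal{A},\mathcal{B}$ of $\Delta$ and any type-preserving isomorphism $\phi\colon\mathcal{A}\to\mathcal{B}$ there is $g\in H$ with $g|_{\mathcal{A}}=\phi$: pick a chamber $c\subset\mathcal{A}$, apply strong transitivity to get $g\in H$ with $g\mathcal{A}=\mathcal{B}$ and $gc=\phi(c)$, and note that two type-preserving isomorphisms of apartments agreeing on a chamber coincide (a type-preserving automorphism of a Coxeter complex fixing a chamber is the identity). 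Second, taking $\mathcal{B}=\mathcal{A}$, this shows $\Stab_{H}(\mathcal{A})$ induces on $\mathcal{A}$ \emph{all} type-preserving automorphisms, in particular all translations of $\mathcal{A}$.

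The core step is to prove transitivity of $H_{\xi}$ on $\mathsf{Ap}(\xi)$. Fix $\mathcal{A},\mathcal{B}\in\mathsf{Ap}(\xi)$; by the first remark it suffices to build a type-preserving isomorphism $\phi\colon\mathcal{A}\to\mathcal{B}$ with $\phi(\xi)=\xi$. Let $\sigma$ be the simplex of $\partial_{\infty}\Delta$ whose relative interior contains $\xi$. I would choose a chamber $\bar c$ of the Coxeter complex $\partial_{\infty}\mathcal{A}$ having $\sigma$ as a face and a sector $\mathcal{Q}\subset\mathcal{A}$ with $\partial_{\infty}\mathcal{Q}=\bar c$, so that the closed sector $\overline{\mathcal{Q}}$ contains the ray from its tip to $\xi$; similarly inside $\mathcal{B}$ one gets $\mathcal{Q}'\subset\mathcal{B}$. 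By the standard fact that any two sectors of an affine building have subsectors lying in a common apartment, there are subsectors $\mathcal{Q}_{1}\subseteq\mathcal{Q}$, $\mathcal{Q}_{1}'\subseteq\mathcal{Q}'$ and an apartment $\mathcal{C}$ with $\mathcal{Q}_{1},\mathcal{Q}_{1}'\subset\mathcal{C}$; then $\mathcal{Q}_{1}\subset\mathcal{A}\cap\mathcal{C}$ and $\mathcal{Q}_{1}'\subset\mathcal{B}\cap\mathcal{C}$, and $\overline{\mathcal{Q}_{1}}$, $\overline{\mathcal{Q}_{1}'}$ still each contain a ray to $\xi$. Let $\rho\colon\Delta\to\mathcal{A}$ and $\rho'\colon\Delta\to\mathcal{B}$ be the retractions centred at the sector-germs of $\mathcal{Q}_{1}$ and $\mathcal{Q}_{1}'$. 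Since $\mathcal{C}$ contains $\mathcal{Q}_{1}$, the restriction $\rho|_{\mathcal{C}}\colon\mathcal{C}\to\mathcal{A}$ is a type-preserving isomorphism fixing $\overline{\mathcal{Q}_{1}}$ pointwise, hence fixing a ray to $\xi$, hence sending $\xi\in\partial_{\infty}\mathcal{C}$ to $\xi\in\partial_{\infty}\mathcal{A}$; likewise $\rho'|_{\mathcal{C}}\colon\mathcal{C}\to\mathcal{B}$ sends $\xi$ to $\xi$. Therefore $\phi:=(\rho'|_{\mathcal{C}})\circ(\rho|_{\mathcal{C}})^{-1}\colon\mathcal{A}\to\mathcal{B}$ is as required.

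From this the two conclusions follow quickly. For $\Opp(\xi)$: every $\eta\in\Opp(\xi)$ is, by definition, the antipode of $\xi$ in $\partial_{\infty}\mathcal{A}$ for some $\mathcal{A}\in\mathsf{Ap}(\xi)$; fixing a reference $\mathcal{A}_{0}\in\mathsf{Ap}(\xi)$ with corresponding antipode $\eta_{0}$, any $g\in H_{\xi}$ with $g\mathcal{A}=\mathcal{A}_{0}$ fixes $\xi$ and hence maps $\eta$ to $\eta_{0}$, giving transitivity. For cocompactness: every point $x\in\Delta$ lies in some apartment of $\mathsf{Ap}(\xi)$ (take an apartment containing $x$ together with a chamber at infinity having $\sigma$ as a face), so transitivity on $\mathsf{Ap}(\xi)$ gives $\Delta=H_{\xi}\cdot\mathcal{A}_{0}$; since by the second remark $\Stab_{H}(\mathcal{A}_{0})_{\xi}\leq H_{\xi}$ contains all translations of $\mathcal{A}_{0}$, it acts cocompactly on $\mathcal{A}_{0}$, and choosing a compact $K\subset\mathcal{A}_{0}$ with $\Stab_{H}(\mathcal{A}_{0})_{\xi}\cdot K=\mathcal{A}_{0}$ yields $H_{\xi}\cdot K=\Delta$, whence $H_{\xi}$ acts cocompactly on $\Delta$.

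I expect the delicate point to be the construction of $\phi$, namely arranging that the two retractions fix a common apartment worth of data pointing towards $\xi$; this rests on the standard but not entirely routine facts that any two sectors contain subsectors in a common apartment, that a point and a chamber at infinity always lie in a common apartment, and that a retraction centred at a sector-germ restricts to an isomorphism on any apartment containing that sector. The rest is bookkeeping with strong transitivity and with the apartment/sector structure of $\Delta$.
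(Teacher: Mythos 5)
Your proof is correct, and its overall skeleton agrees with the paper's: both arguments obtain cocompactness from the fact that the type-preserving translations in $\Stab_{H}(\Sigma)$ of a single apartment $\Sigma$ fix $\partial_{\infty}\Sigma$ pointwise (hence lie in $H_{\xi}$) and act cocompactly on $\Sigma$, and both reduce transitivity on $\Opp(\xi)$ to a transitivity statement at infinity combined with the uniqueness of the antipode of $\xi$ inside one apartment. Where you genuinely diverge is in how that transitivity statement is produced. The paper passes to the stabilizer $H_{c}$ of an ideal chamber $c \supset \sigma(\xi)$, quotes the fact that strong transitivity on $\Delta$ descends to strong transitivity on $\partial_{\infty}\Delta$ (Garrett, Sec.~17.1), and uses transitivity of $H_{c}$ on $\Opp(c)$ both to sweep out $\Delta$ by apartments containing $c$ at infinity and to move an arbitrary $\xi' \in \Opp(\xi)$ into $\partial_{\infty}\Sigma$. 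You instead prove directly that $H_{\xi}$ is transitive on the set of all apartments containing $\xi$ in their visual boundary, by assembling a type-preserving isomorphism $\mathcal{A}\to\mathcal{B}$ fixing $\xi$ from two sector retractions through a common apartment, and by noting that strong transitivity realizes every type-preserving isomorphism of apartments by an element of $H$ (two such isomorphisms agreeing on a chamber coincide). This buys a self-contained argument with a slightly stronger intermediate conclusion, at the cost of invoking the sector machinery (subsectors of any two sectors lie in a common apartment; retractions centred at sector germs restrict to isomorphisms). One cosmetic point: $\Stab_{H}(\mathcal{A}_{0})$ induces only the \emph{type-preserving} translations of $\mathcal{A}_{0}$ (a general translation may permute vertex types), but these already form a cocompact lattice, so your conclusion is unaffected --- the paper phrases the same step via special vertices of a fixed type.
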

\begin{proof}
Fix an apartment $\Sigma$ in $\Delta$. By the strong transitivity of $H$ on $\Delta$, it is enough to consider only ideal points $\xi \in \partial_{\infty} \Sigma$. For what follows, fix such $\xi$. Then there is a unique, minimal-dimensional ideal simplex $\sigma(\xi) \subset \partial_{\infty} \Sigma$ such that $\xi$ is an interior point of $\sigma(\xi)$. Then choose an ideal chamber $c \in \Ch(\Sigma)$ (not necessarily unique), with $\sigma(\xi) \subset c$ and notice $H_{c} \leq H_{\xi}$. It is then enough to prove that for every $c \in \Ch(\Sigma)$, the group $H_{c}$ acts cocompactly on $\Delta$. Indeed, fix $c \in \Ch(\Sigma)$. Since $H$ is strongly transitive on $\Delta$, by \cite[Sec. 17.1]{Ga}, $H$ is strongly transitive on $\partial_{\infty} \Delta$ as well. In particular, we have that $H_{c}$ acts transitively on the set $\Opp(c)$ of all ideal chambers of $\partial_{\infty} \Delta$ opposite $c$. As for every chamber $C \in \Ch(\Delta)$ there is an apartment $\Sigma_1 $ in $\Delta$  containing $C$, and $c$ as an ideal chamber of $\Ch(\Sigma_1)$, we are left with proving that $\Stab_{H_c}(\Sigma):=\{g \in H_{c} \; \vert \; g(\Sigma)=\Sigma\}$ acts cocompactly on $\Sigma$.  To do that, again by the strong transitivity of $H$ on $\Delta$, we know that $\Stab_{H} (\Sigma) \backslash \Fix_{H}(\Sigma)$ is the affine Weyl group $W$ corresponding to the affine building $\Delta$, and that W is generated by the reflexions through the walls of a chamber in $\Ch(\Sigma)$ (see \cite[Section 5.2]{Ga}). We also know that $W$ is simply transitive on special vertices of $\Sigma$ of the same type, and for every two special vertices $v_1,v_2$ of $\Sigma$ of the same type there is a translation automorphism in $\Stab_{H}(\Sigma)$ of the apartment $\Sigma$, sending $v_1$ to $v_2$. In particular, such translation automorphism fixes pointwise the ideal boundary $\partial_{\infty} \Sigma$ of $\Sigma$, and thus belonging to the group $\Stab_{H_c}(\Sigma) \leq H_c$. This remark easily implies $\Stab_{H_c}(\Sigma)$ acts cocompactly on $\Sigma$ and so proving the claim.

Let us now prove the transitivity of $H_{\xi}$ on the set $\Opp(\xi)$. As above, fix an apartment $\Sigma$ of $\Delta$ with $\xi \in \partial_{\infty} \Sigma$, an ideal chamber $c \in \Ch(\partial_{\infty} \Sigma)$ with $\xi \in c$, and let $\xi_{-} $ be the unique ideal point in $\partial_{\infty}\Sigma$ opposite $\xi$. Take $\xi' \in \Opp(\xi)$, with $\xi' \neq \xi_{-}$. Then there is an apartment $\Sigma'$ of $\Delta$ such that $c \subset \Ch(\partial_{\infty}\Sigma')$ and $\xi' \in \partial_{\infty}\Sigma'$. As $H_c \leq H_{\xi}$ acts transitively on $\Opp(c)$, there is $g \in H_c$ with $g(\Sigma') = \Sigma$ and $g(c)=c$. In particular, $g(\xi') \in \partial_{\infty} \Sigma$ and $g(\xi)=\xi$. As $\xi_{-}, \xi'$ are opposite $\xi$, we must have $g(\xi')=\xi_{-}$. The lemma is proven.
\end{proof}

Let $\xi \in \partial_{\infty} \Delta$ and choose $\xi_{-} \in \partial_{\infty} \Delta$ an ideal point opposite $\xi$. Denote by $P(\xi, \xi_{-})$ the set of all apartments of $\Delta$ that contain $\xi$ and $\xi_{-}$ in their ideal boundary at infinity. As $\xi$ is an ideal point, there is a unique minimal-dimensional ideal simplex $\sigma(\xi) \subset \partial_{\infty} \Delta$ such that $\xi$ is an interior point of $\sigma(\xi)$. Let $\sigma_{-}(\xi) \subset \partial_{\infty} \Delta$ be the corresponding ideal simplex, opposite $\sigma(\xi)$, that contains $\xi_{-}$ as an interior point.  Denote by $n_{\xi}$ the dimension (with respect to $\partial_{\infty} \Delta$) of the ideal simplex $\sigma(\xi)$. Then $P(\xi, \xi_{-})= \RR^{n_{\xi}+1} \times \Delta_{\xi}$, where $\Delta_{\xi}$ is a sub-building of $\Delta$, this is proved by Rousseau \cite[4.3]{Rou} in the more general setting of masures.

\begin{lemma}
\label{lem::cocompact2}
Let $\Delta$ be a locally finite thick affine building. Let $H$ be a closed subgroup of $\Aut_{0}(\Delta)$ that acts strongly transitively on $\Delta$. Let $\xi \in \partial_{\infty} \Delta$ and choose $\xi_{-} \in  \partial_{\infty} \Delta$ opposite $\xi$. Then the closed group $H_{\xi,\xi_{-}} \leq H_{\xi}$ acts cocompactly on $P(\xi, \xi_{-})$.
\end{lemma}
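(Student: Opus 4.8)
The plan is to reduce the statement to two ingredients: that $H_{\xi,\xi_-}$ acts cocompactly on one apartment of $P(\xi,\xi_-)$, and that it acts transitively on the set of all apartments of $\Delta$ whose ideal boundary contains $\xi$ and $\xi_-$. Throughout I would regard $P(\xi,\xi_-)$ as the union $\bigcup\Sigma$ of those apartments $\Sigma$ of $\Delta$ with $\xi,\xi_-\in\partial_{\infty}\Sigma$ (the convex subset $\RR^{n_{\xi}+1}\times\Delta_{\xi}$ produced by Rousseau), and write $\sigma:=\sigma(\xi)$ and $\sigma_-:=\sigma_-(\xi)$ for the opposite minimal ideal simplices through $\xi$, resp.\ $\xi_-$. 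First I would record that $H_{\xi,\xi_-}$ preserves $P(\xi,\xi_-)$: an element fixing $\xi$ and $\xi_-$ maps every geodesic line from $\xi_-$ to $\xi$ to another such line, hence maps $P(\xi,\xi_-)$ onto itself. I would also observe that $H_{\xi,\xi_-}=H_{\sigma,\sigma_-}$, the subgroup of elements fixing $\sigma$ and $\sigma_-$ pointwise: since $\sigma$ is the unique minimal ideal simplex with $\xi$ in its interior, $g(\xi)=\xi$ forces $g(\sigma)=\sigma$ as a simplex, and as $g$ is type-preserving and the vertices of $\sigma$ (being those of a face of a chamber) carry pairwise distinct types, $g$ fixes each vertex of $\sigma$, hence $\sigma$ pointwise; the reverse inclusion is immediate, and likewise for $\sigma_-$.

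For the first ingredient, fix an apartment $\Sigma_0\subseteq P(\xi,\xi_-)$. As in the proof of Lemma~\ref{lem::cocompact1}, $H$ contains the translation automorphisms of $\Sigma_0$ sending same-type special vertices to one another; each such translation fixes $\partial_{\infty}\Sigma_0$ pointwise, in particular fixes $\xi$ and $\xi_-$, so it lies in $\Stab_{H_{\xi,\xi_-}}(\Sigma_0)$, and these translations already act cocompactly on $\Sigma_0$. Hence there is a compact set $D\subseteq\Sigma_0$ with $\Stab_{H_{\xi,\xi_-}}(\Sigma_0)\cdot D=\Sigma_0$. The second, and main, ingredient is the transitivity of $H_{\xi,\xi_-}$ on the set $\mathcal{A}$ of apartments $\Sigma$ of $\Delta$ with $\xi,\xi_-\in\partial_{\infty}\Sigma$. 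Given $\Sigma_1,\Sigma_2\in\mathcal{A}$, their ideal boundaries are apartments of the spherical building $\partial_{\infty}\Delta$ (by \cite[Theorem~11.79]{AB}), each containing $\sigma$ and $\sigma_-$. Choose ideal chambers $c_i\in\Ch(\partial_{\infty}\Sigma_i)$ with $\sigma\subseteq c_i$. Since $H$ is strongly transitive on $\Delta$, it is strongly transitive on $\partial_{\infty}\Delta$ (\cite[Sec.~17.1]{Ga}), so there is $g\in H$ with $g(\partial_{\infty}\Sigma_1)=\partial_{\infty}\Sigma_2$ and $g(c_1)=c_2$; by the same bijection of apartments, $g(\Sigma_1)=\Sigma_2$. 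Now $g(\sigma)$ is a face of $g(c_1)=c_2$ of the same type as $\sigma$, and a chamber has exactly one face of each type, so $g(\sigma)=\sigma$ and (type-preservation) $g$ fixes $\sigma$ pointwise; moreover $g(\sigma_-)\subseteq g(\partial_{\infty}\Sigma_1)=\partial_{\infty}\Sigma_2$ and is opposite $g(\sigma)=\sigma$, while $\sigma_-$ is the only simplex of the Coxeter complex $\partial_{\infty}\Sigma_2$ opposite $\sigma$, so $g(\sigma_-)=\sigma_-$ and $g$ fixes $\sigma_-$ pointwise. Thus $g\in H_{\sigma,\sigma_-}=H_{\xi,\xi_-}$ with $g(\Sigma_1)=\Sigma_2$, which proves the claim.

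Assembling: every point of $P(\xi,\xi_-)$ lies in some $\Sigma\in\mathcal{A}$, and $\Sigma=g\Sigma_0$ for a suitable $g\in H_{\xi,\xi_-}$, whence $P(\xi,\xi_-)=\bigcup_{g\in H_{\xi,\xi_-}}g\Sigma_0=\bigcup_{g}g\bigl(\Stab_{H_{\xi,\xi_-}}(\Sigma_0)\cdot D\bigr)=H_{\xi,\xi_-}\cdot D$; since $D$ is compact and $P(\xi,\xi_-)$ is $H_{\xi,\xi_-}$-invariant, $H_{\xi,\xi_-}$ acts cocompactly on $P(\xi,\xi_-)$. I expect the main obstacle to be the transitivity claim — specifically, verifying that the element furnished by strong transitivity of $H$ on $\partial_{\infty}\Delta$ can be taken to fix both $\sigma(\xi)$ and $\sigma_-(\xi)$ (hence $\xi$ and $\xi_-$); the remaining points are routine once one adopts the identification of $P(\xi,\xi_-)$ with the union of the relevant apartments and imports the translation-subgroup input already used in Lemma~\ref{lem::cocompact1}.
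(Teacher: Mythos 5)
Your proof is correct and takes essentially the same route as the paper's: first transitivity of $H_{\xi,\xi_{-}}$ on the apartments constituting $P(\xi,\xi_{-})$ via strong transitivity of $H$ on $\partial_{\infty}\Delta$, then cocompactness of $\Stab_{H_{\xi,\xi_{-}}}(\Sigma_0)$ on a single apartment using the translation automorphisms already extracted in Lemma~\ref{lem::cocompact1}. The only difference is that you spell out, via $\sigma(\xi)$, type-preservation, and uniqueness of the opposite simplex in an apartment, why the element furnished by strong transitivity actually fixes $\xi$ and $\xi_{-}$ --- a step the paper asserts without detail.
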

\begin{proof}
Fix an apartment $\Sigma$ of $\Delta$ such that $\xi,\xi_{-} \in \partial_{\infty} \Sigma$, and an ideal chamber $c \in \Ch(\partial_{\infty}\Sigma)$ with $\xi \in c$. Then $\Sigma \in P(\xi, \xi_{-})$. Take another apartment $\Sigma' \in P(\xi, \xi_{-})$ with $\Sigma \neq \Sigma'$, and an ideal chamber $c' \in \Ch(\partial_{\infty} \Sigma')$ with $\xi \in c'$. By \cite[Sec. 17.1]{Ga}, $H$ acts strongly transitively on $\partial_{\infty} \Delta$. Then there is $g \in H$ such that $g(c')=c$ and $g(\Sigma')=\Sigma$. Since $\Sigma, \Sigma' \in  P(\xi, \xi_{-})$, we must have $g(\xi)=\xi$ and $g(\xi_{-})=\xi_{-}$. Therefore, $g \in H_{\xi,\xi_{-}}$ and this shows $H_{\xi,\xi_{-}}$ acts transitively on the set of apartments $P(\xi, \xi_{-})$. To prove the lemma it is enough to show that $\Stab_{H_{\xi,\xi_{-}}}(\Sigma)$ acts cocompactly on $\Sigma$. As used in the proof of Lemma \ref{lem::cocompact1}, the affine Weyl group $W$ of $\Delta$, which is isomorphic to $\Stab_{H} (\Sigma) \backslash \Fix_{H}(\Sigma)$, contains translation automorphisms in $\Stab_{H} (\Sigma)$ that sends any special vertex of $\Sigma$ to any other special vertex of $\Sigma$ of the same type. And all those translation automorphisms in $\Stab_{H} (\Sigma)$ do pointwise stabilise the ideal boundary $\partial_{\infty} \Sigma$. Thus, they are all elements of $H_{\xi,\xi_{-}}$, and so $\Stab_{H_{\xi,\xi_{-}}}(\Sigma)$ indeed acts cocompactly on $\Sigma$ as claimed, implying the lemma.  
\end{proof}

Let us now recall some definitions and results from \cite{CM}.

\begin{definition}[see Introduction \cite{CM}]
\label{def::unipotent_radical}
Let $(X,d)$ be a proper \cat space, $H \leq Isom(X)$ a closed subgroup and $\xi$ in the visual boundary $\partial_{\infty}X$ of $X$. Associated with $H_{\xi}$ we define
$$H_{\xi}^{u}:=\{ g \in H \; \vert \; \lim\limits_{t \to \infty} d(g(r(t)), r(t)) = 0, \; \forall \text{ geodesic rays } r: [0,\infty) \to X \text{ with } r(\infty)=\xi \}.$$
Notice, $H_{\xi}^{u}$ is a closed normal subgroup of $H_{\xi}$.
\end{definition}

\begin{remark}
\label{rem::unipotent_radical}
Let $\Delta$ be a locally finite thick affine building and $H$ a closed subgroup of $\Aut_{0}(\Delta)$. Then it is easy to see directly from Definition \ref{def::unipotent_radical} that for every $\xi \in \partial_{\infty} \Delta$ the subgroup $H_{\xi}^{u}$ fixes pointwise any ideal chamber of $\partial_{\infty} \Delta$ that contains $\xi$.
\end{remark}

Below we state \cite[Theorem J]{CM} in the spacial case of $\Delta$ being a locally finite thick affine building.
\begin{theorem}(Levi decomposition I)
\label{thm::levi}
Let $\Delta$ be a locally finite thick affine building and $H$ a closed subgroup of $\Aut_{0}(\Delta)$ that acts strongly transitively on $\Delta$. Let $\xi \in \partial_{\infty} \Delta$. Then for every $\xi' \in \Opp(\xi)$ we have a decomposition 
$$H_{\xi} = H_{\xi, \xi'} \cdot H_{\xi}^{u}$$
which is almost semi-direct in the sense that $H_{\xi, \xi'} \cap H_{\xi}^{u}$ is compact. In particular, $H_{\xi}^{u}$ acts transitively on $\Opp(\xi)$.
\end{theorem}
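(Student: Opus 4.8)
The plan is to deduce the Levi decomposition from the cocompactness statements already established (Lemmas \ref{lem::cocompact1} and \ref{lem::cocompact2}) together with the abstract Levi decomposition of \cite{CM}. Since the excerpt explicitly says this theorem is the specialization of \cite[Theorem J]{CM} to the case of an affine building, the bulk of the work is to verify that the hypotheses of that general theorem hold in our setting, and then to extract the ``in particular'' clause. So first I would recall the statement of \cite[Theorem J]{CM}: for $X$ a proper \cat space with a closed group $H \leq \mathrm{Isom}(X)$ acting cocompactly (or acting ``nicely enough''), and $\xi \in \partial_\infty X$ such that $H_\xi$ acts cocompactly on $X$, one obtains, for any $\xi' \in \Opp(\xi)$, the almost-semidirect decomposition $H_\xi = H_{\xi,\xi'} \cdot H_\xi^u$ with $H_{\xi,\xi'} \cap H_\xi^u$ compact, provided $H_{\xi,\xi'}$ acts cocompactly on the union of all geodesics joining $\xi$ to $\xi'$, i.e. on $P(\xi,\xi')$.

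Second, I would check that all these hypotheses are met. The building $\Delta$, with its complete apartment system and CAT(0) metric, is a proper \cat space, and $H \leq \Aut_0(\Delta) \leq \mathrm{Isom}(\Delta)$ is closed. Lemma \ref{lem::cocompact1} gives exactly that $H_\xi$ acts cocompactly on $\Delta$ for every $\xi \in \partial_\infty \Delta$; and Lemma \ref{lem::cocompact2} gives that $H_{\xi,\xi_-}$ acts cocompactly on $P(\xi,\xi_-)$ for any choice of opposite point $\xi_-$. Thus the hypotheses of \cite[Theorem J]{CM} are satisfied verbatim, yielding $H_\xi = H_{\xi,\xi'} \cdot H_\xi^u$ with compact intersection, for every $\xi' \in \Opp(\xi)$.

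Third, for the final assertion that $H_\xi^u$ acts transitively on $\Opp(\xi)$: by Lemma \ref{lem::cocompact1} the full group $H_\xi$ already acts transitively on $\Opp(\xi)$. Given any $\xi', \xi'' \in \Opp(\xi)$, pick $h \in H_\xi$ with $h(\xi') = \xi''$; writing $h = u \cdot k$ with $u \in H_\xi^u$ and $k \in H_{\xi,\xi'}$ (using the decomposition attached to the basepoint $\xi'$), we have $k(\xi') = \xi'$ since $k$ fixes $\xi'$, so $u(\xi') = u(k(\xi')) = h(\xi') = \xi''$. Hence $H_\xi^u$ is already transitive on $\Opp(\xi)$.

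The only genuine subtlety — and the step I would be most careful about — is the bookkeeping of \emph{which} opposite point each factor stabilizes: the decomposition $H_\xi = H_{\xi,\xi'} H_\xi^u$ depends on $\xi'$, and in the transitivity argument one must use the decomposition relative to the \emph{source} point $\xi'$ so that the $H_{\xi,\xi'}$-factor fixes $\xi'$ and the motion is carried entirely by the unipotent factor; this uses that $H_\xi^u$ is \emph{normal} in $H_\xi$ (Definition \ref{def::unipotent_radical}), so it does not matter whether we write $h = uk$ or $h = k'u'$. Beyond that, everything is a direct citation of \cite[Theorem J]{CM} combined with the two cocompactness lemmas, so no further hard work is needed.
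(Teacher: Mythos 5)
Your proposal is correct and follows essentially the same route as the paper: both reduce the statement to \cite[Theorem J]{CM} (the paper additionally sketches the internal mechanism via the horoaction $\omega_{\xi}$, whose kernel is $H_{\xi}^{u}$ and whose image is $H_{\xi,\xi'}$), with Lemmas \ref{lem::cocompact1} and \ref{lem::cocompact2} supplying the needed transitivity and cocompactness. Your derivation of the transitivity of $H_{\xi}^{u}$ on $\Opp(\xi)$ from the decomposition and the normality of $H_{\xi}^{u}$ is exactly the argument the paper leaves implicit.
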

\begin{proof}
For the complete proof see \cite{CM}. Here we will just give the main idea of the proof. Fix for what follows some $\xi' \in \Opp(\xi)$ and consider the set $P(\xi, \xi')$. By Lemma \ref{lem::cocompact1} we know that $H_{\xi}$ acts transitively on $\Opp(\xi)$, implying that for every $\xi'' \in \Opp(\xi)$, there exists $g \in H_{\xi}$ such that $g(\xi'')=\xi'$ and in particular, $g(P(\xi, \xi''))=P(\xi, \xi')$. Moreover, similarly as for retractions with respect to an ideal chamber in $\partial_{\infty} \Delta$, one can intuitively consider a canonical retraction $\rho_{\xi}$ with respect to the ideal point $\xi$, sending each set $P(\xi, \xi'')$ to $P(\xi, \xi')$, for every $\xi'' \in \Opp(\xi)$. The \textbf{horoaction} $\omega_{\xi}: H_{\xi} \to Isom(P(\xi, \xi'))$ defined in \cite[Definition 3.1]{CM} is aiming to track the action of an element $g \in H_{\xi}$, via the retraction $\rho_{\xi}$ on the space $P(\xi, \xi')$ (i.e., $\rho_{\xi}(g)$). Then it is easy to see that $\Ker(\omega_{\xi})= H_{\xi}^{u}$ and by the transitivity of $H_{\xi}$ on $\Opp(\xi)$, $\Imm(\omega_{\xi})=H_{\xi,\xi'}$. The decomposition $H_{\xi} = H_{\xi, \xi'} \cdot H_{\xi}^{u}$ follows. As $H_{\xi}^{u}$ contains only elliptic elements, the intersection $H_{\xi, \xi'} \cap H_{\xi}^{u}$ is compact. Again by the transitivity of $H_{\xi}$ on $\Opp(\xi)$ one can conclude that $H_{\xi}^{u}$ acts transitively on $\Opp(\xi)$ as well.
\end{proof}

\begin{definition}[see \cite{CM} Definition 3.5]
\label{def::radial_seq}
Let $G$ be a group acting by isometries on a proper \cat space $X$. A sequence $\{g_n\}_{n\geq 1} \subset G$ is called \textbf{radial} in $G$ if there exist an ideal point $\xi \in \partial_{\infty}X$, called \textbf{the radial limit point} of $\{g_n\}_{n\geq 1}$, and a geodesic ray $r : [0, \infty) \to X$ pointing towards $\xi$ such that the sequence $g_n^{-1}(r(0))$ converges to $\xi$, with respect to the cone topology on $X \cup \partial_{\infty} X$, while remaining at bounded distance from $r$. 
\end{definition}

One of the properties of radial sequences is the existence of a limiting bi-infinite geodesic line in $X$ with specific features. The following technical lemma expresses exactly that, although its proof is very easy and relays on the properness of $X$ implying that $X \cup \partial_{\infty} X$ is a compact space, with respect to the cone topology. \begin{lemma}
\label{lem::limit_point_radial}
Let $G$ be a group acting by isometries on a proper \cat space $X$. Suppose $\{g_n\}_{n\geq 1} \subset G$ is a radial sequence with radial limit point $\xi \in \partial_{\infty} X$ and geodesic ray $r : [0, \infty) \to X$ pointing towards $\xi$. Then one can extract a subsequence $\{g_{n_k}\}_{k \geq 1}$ of $\{g_n\}_{n\geq 1}$ such that the sequence of geodesic rays $\{g_{n_k}(r)\}_{k\geq 1}$ converges to a bi-infinite geodesic line $\ell : \RR \to X$ with the properties that $\lim\limits_{k \to \infty}g_{n_k}(r(0))= \ell(-\infty)=:\eta_{-} \in \partial_{\infty} X$ and  $\lim\limits_{k \to \infty}g_{n_k}(\xi)= \ell(\infty)=:\eta_{+} \in \partial_{\infty} X$.
\end{lemma}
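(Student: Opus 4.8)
The plan is to use compactness of $X \cup \partial_\infty X$ in the cone topology, which holds because $X$ is proper. First I would use the defining property of a radial sequence: there is a geodesic ray $r$ pointing towards $\xi$ and a constant $D \geq 0$ such that each point $g_n^{-1}(r(0))$ lies within distance $D$ of the image of $r$, and $g_n^{-1}(r(0)) \to \xi$ in the cone topology. Applying $g_n$, I obtain that the geodesic ray $g_n(r)$ starts at $g_n(r(0))$, which is within distance $D$ of $g_n(r)$ itself (trivially) — the useful reformulation is that $r(0)$ lies within distance $D$ of the ray $g_n^{-1}(r(\cdot))$, equivalently $g_n(r(0))$ lies within distance $D$ of $g_n(r)$; more to the point, for the rays $\ell_n := g_n(r)$ we know $\ell_n(0) = g_n(r(0))$ and, since $g_n^{-1}(r(0))$ stays at bounded distance from $r$, the point $r(0)$ stays at bounded distance from $g_n(r) = \ell_n$. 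So each $\ell_n$ passes within distance $D$ of the fixed basepoint $r(0)$; let $p_n$ be a point on $\ell_n$ realizing (approximately) this distance, so $p_n$ lies in the closed ball $\bar B(r(0), D)$, which is compact.

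Next I would extract subsequences. By compactness of $\bar B(r(0),D)$, pass to a subsequence so that $p_{n_k} \to p_\infty \in \bar B(r(0),D)$. By compactness of $X \cup \partial_\infty X$, further refine so that the endpoints $\ell_{n_k}(\infty) = g_{n_k}(\xi)$ converge to some $\eta_+ \in X \cup \partial_\infty X$, and the "backward endpoints" — here I reparametrize each $\ell_n$ as a ray based at $p_n$ going in the direction away from $\ell_n(\infty)$, call its ideal endpoint $\ell_n(-\infty)$, which is the limit of $g_n(r(-t))$... wait, $r$ is only a ray, so instead I track $g_{n_k}(r(0)) \to \eta_-$, refining again so this converges in $X \cup \partial_\infty X$. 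Since $g_n^{-1}(r(0)) \to \xi$ stays at bounded distance from $r$ while escaping to infinity, the points $g_n^{-1}(r(0))$ leave every bounded set, hence $g_n(r(0))$ also leaves every bounded set (as $g_n$ is an isometry and $g_n^{-1}(r(0))$ is unbounded; more carefully, $d(g_n(r(0)), r(0)) = d(r(0), g_n^{-1}(r(0))) \to \infty$), so $\eta_- \in \partial_\infty X$. Similarly $g_n(\xi) = \ell_n(\infty)$ and the ray $\ell_n$ has length $\infty$ starting near the compact set $\bar B(r(0),D)$, so $\eta_+ \in \partial_\infty X$ as well.

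Then I would argue that the sequence of rays (suitably re-based at $p_{n_k}$) converges to a bi-infinite geodesic $\ell$. Concretely: $\ell_{n_k}$ is a geodesic ray passing through $p_{n_k} \to p_\infty$, with forward endpoint $g_{n_k}(\xi) \to \eta_+$ and with $g_{n_k}(r(0)) \to \eta_-$ sitting on $\ell_{n_k}$ "behind" $p_{n_k}$ at distance going to infinity. By the standard convergence theorem for geodesics in a proper \cat space (uniform convergence on compacts of geodesic segments with convergent endpoints), the segments $[g_{n_k}(r(0)), \ell_{n_k}(\infty)]$ — which have one endpoint escaping to $\eta_-$ and pass through the convergent points $p_{n_k}$ — converge to a bi-infinite geodesic line $\ell : \RR \to X$ with $\ell(-\infty) = \eta_-$ and $\ell(+\infty) = \eta_+$, and $p_\infty \in \ell(\RR)$. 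Continuity of the action on the cone boundary then gives $\lim_k g_{n_k}(r(0)) = \ell(-\infty) = \eta_-$ and $\lim_k g_{n_k}(\xi) = \ell(+\infty) = \eta_+$, as required.

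The main obstacle — and the only slightly delicate point — is to verify that both limiting endpoints $\eta_-, \eta_+$ genuinely lie in $\partial_\infty X$ rather than in $X$, i.e. that the relevant points escape every bounded set; this is exactly where the "bounded distance from $r$ while $g_n^{-1}(r(0)) \to \xi$" hypothesis in the definition of radial sequence is used (it forces $d(r(0), g_n^{-1}(r(0))) \to \infty$ for $\eta_-$, and the infinite length of the rays $\ell_{n_k}$ emanating near a compact ball forces $\eta_+ \in \partial_\infty X$). Everything else is a routine application of properness, compactness of the cone compactification, and the limiting behavior of geodesic segments with convergent endpoints in \cat spaces.
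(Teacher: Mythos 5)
Your proposal is correct and follows essentially the same route as the paper's own proof: project $g_n^{-1}(r(0))$ onto $r$ to get points $g_n(r(t_n))$ on the translated rays lying in a fixed compact ball around $r(0)$, then use compactness of $X\cup\partial_\infty X$ to extract subsequences so that $g_{n_k}(\xi)\to\eta_+$ and $g_{n_k}(r(0))\to\eta_-$ and conclude that the rays converge to a bi-infinite geodesic. Your write-up is in fact slightly more careful than the paper's in explicitly verifying that $\eta_\pm$ land in $\partial_\infty X$ (via $d(r(0),g_n^{-1}(r(0)))\to\infty$), a point the paper leaves implicit.
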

\begin{proof}
By the definition of a radial sequence, we have that $\{g_n^{-1}(r(0))\}_{n \geq 1}$ converges to $\xi$ and it stays at bounded distance from $r$. Denote by $r(t_n)$ the projection of $g_n^{-1}(r(0))$  to $r$. Then, by the construction, the sequence $\{g_n(r(t_n))\}_{n\geq 1}$  stays at bounded distance from $r(0)$. As $X$ is proper, implying the visual boundary $\partial_{\infty} X$ is compact, one can extract a subsequence $\{n_k\}_{k \geq 1}$ such that $\lim\limits_{k \to \infty}g_{n_k}(\xi)= \eta_{+} \in \partial_{\infty} X$. Then, up to further extracting a subsequence from $\{n_k\}_{k \geq 1}$, the sequence of points $\{g_{n_k}(r(0))\}_{k\geq 1}$ converges to an ideal point $\eta_{-} \in \partial_{\infty} X$. As $g_{n_k}(r)$ are geodesics rays, from the above limiting procedures one can conclude that the sequence of geodesic rays $\{g_{n_k}(r)\}_{k \geq 1}$ converges to a bi-infinite geodesic line $\ell : \RR \to X$ with $\ell(-\infty)=\eta_{-}$ and $\ell(\infty)=\eta_{+}$.
\end{proof}

\begin{definition}
\label{def::limit_point}
In the hypotheses of Lemma \ref{lem::limit_point_radial}, let $\{g_n\}_{n\geq 1} \subset G$ be a radial sequence with radial limit point $\xi \in \partial_{\infty} X$ and geodesic ray $r : [0, \infty) \to X$ pointing towards $\xi$. An ideal point $\eta \in \partial_{\infty} X$ such that there is a subsequence $\{n_k\}_{k\geq 1}$ with $\lim\limits_{k \to \infty}g_{n_k}(r(0))= \eta \in \partial_{\infty} X$ is called a \textbf{limit point} of $\{g_n\}_{n\geq 1}$.
\end{definition}

The following, also technical lemma, gives a reason why and how radial sequences can be used in applications. 

\begin{lemma}
\label{lem::radial_seq_prop}
Let $X$ be a proper \cat space and $G \leq Isom(X)$ be a closed group. Suppose $\{g_n\}_{n\geq 1} \subset G$ is a radial sequence with radial limit point $\xi \in \partial_{\infty} X$ and geodesic ray $r : [0, \infty) \to X$ pointing towards $\xi$. Suppose also the sequence of geodesic rays $\{g_{n}(r)\}_{n\geq 1}$ converges to a bi-infinite geodesic line $\ell : \RR \to X$ with the properties that $\lim\limits_{n \to \infty}g_{n}(r(0))= \ell(-\infty)=:\eta_{-} \in \partial_{\infty} X$ and  $\lim\limits_{n \to \infty}g_{n}(\xi)= \ell(\infty)=:\eta_{+} \in \partial_{\infty} X$. Let $g \in G_{\xi}$. Then the sequence $\{ g_{n}gg_{n}^{-1}\}_{n\geq 1}$ is bounded and for any of its convergent subsequence $\{ g_{n_k}gg_{n_k}^{-1}\}_{n\geq 1}$ the corresponding limit $\lim\limits_{k \to \infty} g_{n_k}gg_{n_k}^{-1}=:h \in G$ is an element of $G_{\eta_{-},\eta_{+}}$.
\end{lemma}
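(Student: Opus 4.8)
The plan is to reduce everything to two soft \cat facts: asymptotic geodesic rays stay within bounded distance of one another, and two geodesic lines at finite Hausdorff distance share their endpoints at infinity. First I would record the data coming from radiality: by Definition \ref{def::radial_seq} there are $D \geq 0$ and parameters $t_n \geq 0$ with $d\big(g_n^{-1}(r(0)), r(t_n)\big) \leq D$, and since $g_n^{-1}(r(0)) \to \xi = r(\infty)$ in the cone topology we get $t_n \to \infty$. Fix $g \in G_{\xi}$ and set $M := d(g(r(0)), r(0))$; because $g$ fixes $\xi$, the rays $r$ and $g \circ r$ are asymptotic, so $t \mapsto d(g(r(t)), r(t))$ is a bounded convex function on $[0,\infty)$, hence non-increasing, whence $d(g(r(t)), r(t)) \leq M$ for all $t \geq 0$.

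\textbf{Boundedness.} Using that $g_n^{-1}$ is an isometry and the triangle inequality through the point $r(t_n)$, I would estimate
\[
d\big((g_n g g_n^{-1})(r(0)), r(0)\big) = d\big(g(g_n^{-1}(r(0))), g_n^{-1}(r(0))\big) \leq 2D + d\big(g(r(t_n)), r(t_n)\big) \leq 2D + M .
\]
Thus the orbit of $r(0)$ under $\{g_n g g_n^{-1}\}_n$ stays in $\overline{B}(r(0), 2D+M)$; since $X$ is proper, $\mathrm{Isom}(X)$ acts properly on $X$, and since $G$ is closed, the set $\{f \in G : d(f(r(0)), r(0)) \leq 2D+M\}$ is compact. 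Hence $\{g_n g g_n^{-1}\}_n$ is relatively compact, i.e.\ bounded, and convergent subsequences exist.

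\textbf{Identifying the limit.} Let $\{g_{n_k}\}$ be a subsequence with $g_{n_k} g g_{n_k}^{-1} \to h \in G$ (uniformly on compacta). By hypothesis the rays $g_n(r)$ converge to $\ell$; concretely — this is the content of the proof of Lemma \ref{lem::limit_point_radial} — there is a sequence $s_n \to \infty$ with $g_n(r(s_n + t)) \to \ell(t)$ for every $t \in \RR$, and $\ell(-\infty) = \eta_-$, $\ell(+\infty) = \eta_+$. I would then fix $t \in \RR$, write $\epsilon_k := d\big(\ell(t), g_{n_k}(r(s_{n_k}+t))\big) \to 0$, note that $s_{n_k}+t \geq 0$ for $k$ large so that $d\big(g(r(s_{n_k}+t)), r(s_{n_k}+t)\big) \leq M$, and repeat the previous estimate (isometry $g_{n_k}^{-1}$, triangle inequality through $r(s_{n_k}+t)$):
\[
d\big((g_{n_k} g g_{n_k}^{-1})(\ell(t)), \ell(t)\big) = d\big(g(g_{n_k}^{-1}(\ell(t))), g_{n_k}^{-1}(\ell(t))\big) \leq 2\epsilon_k + M .
\]
Passing to the limit in $k$ yields $d(h(\ell(t)), \ell(t)) \leq M$ for all $t \in \RR$, so the geodesic line $h \circ \ell$ lies within Hausdorff distance $M$ of $\ell$ and therefore shares its two ideal endpoints. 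Hence $h(\eta_-) = \eta_-$ and $h(\eta_+) = \eta_+$, i.e.\ $h \in G_{\eta_-,\eta_+}$.

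\textbf{Main obstacle.} The calculations are routine once the objects are in place; the one delicate point is that one must \emph{not} try to deduce $h(\eta_-) = \eta_-$ from the "almost-fixed" points $g_{n_k}(r(0))$ — these converge to $\eta_-$ but leave every compact set, so no estimate survives the limit $g_{n_k} g g_{n_k}^{-1} \to h$, which is only uniform on compacta. (For $\eta_+$ one could alternatively note that $g_{n_k} g g_{n_k}^{-1}$ fixes $g_{n_k}(\xi)$, which tends to $\eta_+$, and invoke continuity of the $G$-action on $X \cup \partial_{\infty}X$; but that shortcut fails for $\eta_-$.) The correct move, and the reason the statement carries the line $\ell$ rather than just $\eta_{\pm}$, is to transfer the displacement bound onto the honest points $\ell(t) \in X$ of the limit line via the controlled reparametrization supplied by Lemma \ref{lem::limit_point_radial}. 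Apart from that, I would invoke only the black boxes that $\mathrm{Isom}(X)$ acts properly on the proper space $X$, that asymptotic rays in a \cat space stay at bounded distance, and that geodesic lines at bounded distance have equal ideal endpoints.
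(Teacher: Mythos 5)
Your proof is correct, and its second half takes a genuinely different route from the paper's. The boundedness step is essentially the paper's argument (the displacement of $r(0)$ under $g_ngg_n^{-1}$ equals the displacement of $g_n^{-1}(r(0))$ under $g$, which is controlled because $g_n^{-1}(r(0))$ stays $D$-close to $r$ and $g$, fixing $\xi$, displaces points of $r$ by at most $M=d(g(r(0)),r(0))$ by convexity); you merely make the constants explicit. For the identification of the limit, however, the paper evaluates $g_{n_k}gg_{n_k}^{-1}$ at the \emph{escaping} test points $g_{n_k}(\xi)$ and $g_{n_k}(r(0))$ and passes to the limit in $X\cup\partial_\infty X$, writing $\lim_k g_{n_k}gg_{n_k}^{-1}(g_{n_k}(r(0)))=\lim_k h(g_{n_k}(r(0)))=h(\eta_-)$; as you observe, since $g_{n_k}gg_{n_k}^{-1}\to h$ only uniformly on compacta, this exchange is not free --- it rests on the \emph{joint} continuity of the extended action $G\times(X\cup\partial_\infty X)\to X\cup\partial_\infty X$, which the paper invokes as ``continuity of the action on $\partial_\infty X$''. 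That fact is true for proper $\CAT(0)$ spaces (so the paper's shortcut does not actually \emph{fail}, for $\eta_-$ any more than for $\eta_+$ --- your ``Main obstacle'' paragraph overstates this a little, and note that even your $\eta_+$ shortcut would need the same joint continuity to replace $g_{n_k}gg_{n_k}^{-1}$ by $h$), but it is left unjustified there. Your alternative --- transferring the displacement bound onto the honest points $\ell(t)\in X$ of the limit line, where pointwise convergence of isometries suffices, obtaining $d(h(\ell(t)),\ell(t))\le M$ for all $t$, and concluding from the fact that geodesic lines at finite Hausdorff distance share ideal endpoints --- sidesteps the issue entirely and is the more self-contained argument; what it costs is the explicit reparametrization $g_n(r(s_n+t))\to\ell(t)$, which is indeed what the convergence hypothesis of the lemma provides. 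The paper also adds a closing remark (elliptic/hyperbolic type and translation length are inherited by $h$) that is used later in Theorem \ref{thm::levi2} but is not part of the statement, so its absence from your proof is not a gap.
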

\begin{proof}
As $g \in G_{\xi}$ and by the definition of the visual boundary $\partial_{\infty} X$, we first notice that the geodesic ray $g(r)$ is  asymptotically equivalent to $r$, and so at bounded distance from the geodesic ray $r$. As $\{g_{n}^{-1}(r(0))\}_{n\geq 1}$ remains at bounded distance from $r$, we have that indeed $d(g_{n}gg_{n}^{-1}(r(0)), r(0))=d(gg_{n}^{-1}(r(0)), g_{n}^{-1}(r(0)))$ is a bounded sequence. Take now a convergent subsequent $\lim\limits_{k \to \infty} g_{n_k}gg_{n_k}^{-1}=:h \in G$. By the hypothesis $\lim\limits_{n \to \infty}g_{n}(\xi)= \eta_{+}$ and by the continuity of the action of $G$ on $\partial_{\infty} X$, we also have $\lim\limits_{k \to \infty} g_{n_k}gg_{n_k}^{-1}(g_{n_k}(\xi))=\lim\limits_{k \to \infty} g_{n_k}g(\xi)=\lim\limits_{k \to \infty} g_{n_k}(\xi)=\eta_{+}=\lim\limits_{k \to \infty} h(g_{n_k}(\xi))=h(\eta_{+})$. Now using $\lim\limits_{n \to \infty}g_{n}(r(0))= \eta_{-}$ and the same computation as above, we have $\lim\limits_{k \to \infty} g_{n_k}gg_{n_k}^{-1}(g_{n_k}(r(0)))=\lim\limits_{k \to \infty} g_{n_k}g(r(0))=\eta_{-}=\lim\limits_{k \to \infty} h(g_{n_k}(r(0)))=h(\eta_{-})$.

Notice, as each element $g_{n}gg_{n}^{-1}$ inherits the same properties as the element $g$ (i.e., elliptic if $g$ is elliptic, hyperbolic of the same translation length if $g$ is hyperbolic), the same is true for a limiting element $h$. 
\end{proof}

\begin{lemma}
\label{lem::radial_points}
Let $\Delta$ be a locally finite thick affine building. Let $H$ be a closed subgroup of $\Aut_{0}(\Delta)$ that acts strongly transitively on $\Delta$. Then every $\xi \in \partial_{\infty}\Delta$ is a radial limit point for some radial sequence in $H_{\xi}$.
\end{lemma}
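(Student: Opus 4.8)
The plan is to extract the radial sequence directly from the cocompactness statement of Lemma~\ref{lem::cocompact1}, so that no new geometric input is needed. Recall that $\Delta$, equipped with the CAT(0) metric induced by its complete apartment system, is a proper CAT(0) space, and that for any point and any ideal point there is a (unique) geodesic ray joining them. So I fix a point $x_{0}\in\Delta$ and let $r\colon[0,\infty)\to\Delta$ be the geodesic ray with $r(0)=x_{0}$ and $r(\infty)=\xi$. By Lemma~\ref{lem::cocompact1} the group $H_{\xi}$ acts cocompactly on $\Delta$, so I may choose a compact set $K\subset\Delta$ with $x_{0}\in K$ and $H_{\xi}\cdot K=\Delta$.

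Then for each $n\geq 1$ I pick $g_{n}\in H_{\xi}$ with $g_{n}\big(r(n)\big)\in K$. Since $x_{0}\in K$ as well,
$$ d\big(g_{n}^{-1}(r(0)),\,r(n)\big)=d\big(x_{0},\,g_{n}(r(n))\big)\leq\operatorname{diam}(K) $$
for all $n$, so $\{g_{n}^{-1}(r(0))\}_{n\geq 1}$ stays inside the $\operatorname{diam}(K)$-neighbourhood of the ray $r$, hence at bounded distance from $r$. Since $d\big(x_{0},g_{n}^{-1}(r(0))\big)\geq n-\operatorname{diam}(K)\to\infty$ while $g_{n}^{-1}(r(0))$ remains within a fixed distance of $r(n)$, the comparison angle at $x_{0}$ between $r$ and the segment $[x_{0},g_{n}^{-1}(r(0))]$ tends to $0$; by definition of the cone topology on $\Delta\cup\partial_{\infty}\Delta$ this forces $g_{n}^{-1}(r(0))\to\xi$. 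Hence $\{g_{n}\}_{n\geq 1}\subset H_{\xi}$ is a radial sequence with radial limit point $\xi$ and geodesic ray $r$, which is exactly the claim.

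There is no serious obstacle here: the entire argument rests on Lemma~\ref{lem::cocompact1}, and ultimately on the translations of an apartment that strong transitivity forces into $H_{\xi}$. The only points worth a line of care are the choice of $K$ containing the base point $x_{0}$ (so the displayed bound is literally $\operatorname{diam}(K)$, though $d(x_{0},K)+\operatorname{diam}(K)$ would do just as well) and the standard fact, valid in any proper CAT(0) space, that a sequence of points remaining at bounded distance from $r(n)$ as $n\to\infty$ converges in the cone topology to $r(\infty)=\xi$. One could instead run the same argument inside a single apartment $\Sigma$ with $\xi\in\partial_{\infty}\Sigma$, using the cocompact action of $\Stab_{H_{\xi,\xi_{-}}}(\Sigma)$ on $\Sigma$ from the proof of Lemma~\ref{lem::cocompact2}, but the version above is the shortest.
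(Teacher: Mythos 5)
Your argument is correct as a proof of the statement as literally written: the identity $d\bigl(g_{n}^{-1}(r(0)),r(n)\bigr)=d\bigl(r(0),g_{n}(r(n))\bigr)\leq\operatorname{diam}(K)$ gives both requirements of Definition~\ref{def::radial_seq} (bounded distance from $r$, and convergence to $\xi$ in the cone topology via the standard comparison-angle fact), and your $g_{n}$ do lie in $H_{\xi}$. The route is, however, different from the paper's. The paper does not pass through the cocompactness statement of Lemma~\ref{lem::cocompact1} as a black box; it works inside a fixed apartment $\Sigma$ containing $\xi$ in its ideal boundary and takes the radial sequence to be (inverses of) \emph{translation automorphisms} $g_{v}\in\Stab_{H}(\Sigma)$ carrying the special vertex $r(0)$ to special vertices $v_{n}$ of the same type converging to $\xi$ along $r$ --- exactly the elements whose existence underlies the proof of Lemma~\ref{lem::cocompact1} in the first place. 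The trade-off matters downstream: the proof of Theorem~\ref{main_thm2} explicitly invokes ``by the proof of Lemma~\ref{lem::radial_points} one can choose $\{t_{i,n}\}_{n\geq1}\subset\Stab_{H}(\Sigma)$ \ldots translation automorphisms of $\Sigma$,'' i.e.\ sequences that fix $\partial_{\infty}\Sigma$ pointwise, normalise $H_{\alpha}$, and have a unique limit point $\xi_{-}$ opposite $\xi$ (as required in Theorem~\ref{thm::levi2}). Your abstractly chosen $g_{n}$ carry none of that structure a priori --- their limit points need not be unique nor opposite $\xi$, and they need not stabilise any apartment --- so while your proof is valid for the lemma as stated, it would not support the way the lemma is subsequently used; if you prefer your version, you should at least record the constructive variant as a remark.
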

\begin{proof}
Let $\xi \in \partial_{\infty} \Delta$ and choose an apartment $\Sigma$ of $\Delta$ such that $\xi \in \partial_{\infty} \Sigma$. Let $r : [0, \infty) \to \Sigma$ be a geodesic ray with endpoint $\xi$ and $r(0)$ a special vertex of $\Sigma$. Notice, there are an infinite number of special vertices of the same type as $r(0)$ that are at bounded distance from $r$, and converging to the endpoint $\xi$. By using the affine Weil group $W$ of $\Delta$ as in the proof of Lemma \ref{lem::cocompact1},  for each special vertex $v$ of $\Sigma$, of the same type as $r(0)$, there is a translation automorphism $g_v \in \Stab_{H}(\Sigma)$ sending $r(0)$ to $v$. By choosing a sequence of special vertices $\{v_n\}_{n \geq 0}$, of the same type as $r(0)$, at bounded distance from $r$, and converging to $\xi$, (the inverses of) their corresponding translation automorphisms $\{g_{v_n}\}_{n\geq 1} \in \Stab_{H}(\Sigma)$ will do the job.
\end{proof}

We are now ready to state the Levi decomposition for subgroups $N \leq H_{\xi}$, where $H \leq \Aut(\Delta)$ and $\xi \in \partial_{\infty} \Delta$. This is \cite[Theorem 3.12]{CM} specialised to the case of an affine building $\Delta$.

\begin{theorem}(Levi decomposition II)
\label{thm::levi2}
Let $\Delta$ be a locally finite thick affine building and $H$ a closed subgroup of $\Aut_{0}(\Delta)$ that acts strongly transitively on $\Delta$. Let $\xi \in \partial_{\infty} \Delta$.  Let $N \leq H_{\xi}$ be a closed subgroup that is normalised by some radial sequence $\{g_n\}_{n \geq 1} \subset H_{\xi}$ with radial limit point $\xi$ and unique limit point $\xi_{-}$; hence $\xi_{-} \in \Opp(\xi)$. Then, writing $N^{u}:= N \cap H_{\xi}^{u}$, we have the decomposition 
$$N = N_{\xi_{-}} \cdot N^{u}$$
which is almost semi-direct in the sense that $N_{\xi_{-}} \cap N^{u}$ is compact. In particular, $N^{u}$ acts transitively on  the $N$-orbits of $\xi_{-}$ in $\Opp(\xi)$.
\end{theorem}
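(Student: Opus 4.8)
The plan is to run the proof of Theorem~\ref{thm::levi} (Levi decomposition~I) relative to the subgroup $N$, using the horoaction $\omega_{\xi}\colon H_{\xi}\to\mathrm{Isom}(P(\xi,\xi_{-}))$ of \cite{CM} and the normalising radial sequence $\{g_{n}\}$ to locate the Levi part of an element of $N$ \emph{inside} $N$. First I would record the general position data: $\xi_{-}\in\Opp(\xi)$ holds because, by Lemma~\ref{lem::limit_point_radial}, $\xi_{-}$ is the backward endpoint of a bi-infinite geodesic line whose forward endpoint is $\lim_{k}g_{k}(\xi)=\xi$ (here one uses that every $g_{n}$ fixes $\xi$). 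Recall from the proof of Theorem~\ref{thm::levi} that $\Ker(\omega_{\xi})=H_{\xi}^{u}$, that $\omega_{\xi}(H_{\xi})=\omega_{\xi}(H_{\xi,\xi_{-}})$, and that on $H_{\xi,\xi_{-}}$ the homomorphism $\omega_{\xi}$ is simply the restriction of the action to the invariant subspace $P(\xi,\xi_{-})=\RR^{n_{\xi}+1}\times\Delta_{\xi}$. Restricting $\omega_{\xi}$ to $N$ gives at once $\Ker(\omega_{\xi}|_{N})=N\cap H_{\xi}^{u}=N^{u}$, so $\omega_{\xi}$ induces an injection $N/N^{u}\hookrightarrow\mathrm{Isom}(P(\xi,\xi_{-}))$.

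The second step is a reduction: it suffices to prove the inclusion $\omega_{\xi}(N)\subseteq\omega_{\xi}(N_{\xi_{-}})$. Indeed, given this, for $n\in N$ pick $\ell\in N_{\xi_{-}}$ with $\omega_{\xi}(\ell)=\omega_{\xi}(n)$; then $\ell^{-1}n\in\Ker(\omega_{\xi}|_{N})=N^{u}$, so $n\in N_{\xi_{-}}\cdot N^{u}$ and hence $N=N_{\xi_{-}}\cdot N^{u}$. The intersection $N_{\xi_{-}}\cap N^{u}$ is closed and contained in $H_{\xi,\xi_{-}}\cap H_{\xi}^{u}$, which is compact by Theorem~\ref{thm::levi}, hence itself compact; and since $N^{u}$ is normal in $N$ (as $H_{\xi}^{u}\trianglelefteq H_{\xi}$), orbit--stabiliser applied to the $N$-action on the orbit $N\cdot\xi_{-}\subseteq\Opp(\xi)$, whose point stabiliser is $N_{\xi_{-}}$, together with $N=N^{u}\cdot N_{\xi_{-}}$, shows $N^{u}$ acts transitively on $N\cdot\xi_{-}$.

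To prove $\omega_{\xi}(N)\subseteq\omega_{\xi}(N_{\xi_{-}})$ --- the heart of the matter --- I would fix $n\in N$ and conjugate by the radial sequence. After passing to a subsequence of $\{g_{n}\}$ via Lemma~\ref{lem::limit_point_radial} (still radial with radial limit point $\xi$, still with unique limit point $\xi_{-}$, still normalising $N$), Lemma~\ref{lem::radial_seq_prop} applied with $g=n\in H_{\xi}$ shows that $\{g_{k}ng_{k}^{-1}\}$ is bounded and, along a further subsequence, converges to some $h\in H_{\xi_{-},\xi}=H_{\xi,\xi_{-}}$; since $\{g_{k}\}$ normalises $N$ and $N$ is closed, $h\in N\cap H_{\xi,\xi_{-}}=N_{\xi_{-}}$. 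It then remains to identify $h$ with a representative of the Levi class of $n$: writing $n=\ell u$ with $\ell\in H_{\xi,\xi_{-}}$, $u\in H_{\xi}^{u}$ (Theorem~\ref{thm::levi}), one argues that conjugation by the radial sequence contracts the unipotent radical, $g_{k}ug_{k}^{-1}\to\mathrm{id}$, so $h=\lim_{k}g_{k}\ell g_{k}^{-1}$, and then transports this through the continuous homomorphism $\omega_{\xi}$, using that $\omega_{\xi}(g_{k})$ acts on $P(\xi,\xi_{-})=\RR^{n_{\xi}+1}\times\Delta_{\xi}$ by a translation in the $\RR^{n_{\xi}+1}$-factor pointing towards $\xi$ together with an action on the $\Delta_{\xi}$-factor that is harmless for the conjugation (being relatively compact, or itself a translation commuting with $\omega_{\xi}(\ell)$). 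One concludes $\omega_{\xi}(h)=\omega_{\xi}(n)$, possibly after replacing $h$ by its conjugate by an element of $N_{\xi_{-}}$ realising the $\Delta_{\xi}$-part of $\lim_{k}\omega_{\xi}(g_{k})$; either way $\omega_{\xi}(n)\in\omega_{\xi}(N_{\xi_{-}})$.

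The hard part is exactly this last geometric input: that conjugating by a radial sequence aimed at $\xi$ contracts $H_{\xi}^{u}$ towards the identity and leaves the horoaction of the Levi part asymptotically unchanged. Making it precise requires unwinding the definition of the retraction $\rho_{\xi}$ (equivalently $\omega_{\xi}$) from \cite{CM} and feeding in the two defining features of radiality --- that $g_{k}^{-1}(r(0))\to\xi$ while staying at bounded distance from $r$, and the uniqueness of the limit point $\xi_{-}$ --- so as to control the dynamics of $\{\omega_{\xi}(g_{k})\}$ on each factor of $P(\xi,\xi_{-})$. This is precisely the content of \cite[Theorem~3.12]{CM}; everything else in the plan is bookkeeping inherited from Theorem~\ref{thm::levi}.
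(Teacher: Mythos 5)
Your proposal follows essentially the same route as the paper's own argument: both reduce the statement to showing $\omega_{\xi}(N)=\omega_{\xi}(N_{\xi_{-}})$ for the horoaction of \cite{CM}, both obtain the candidate Levi representative $h\in N_{\xi_{-}}$ by conjugating $n\in N$ with the normalising radial sequence and invoking Lemma \ref{lem::radial_seq_prop} together with closedness of $N$, and both defer the remaining geometric content (the contraction of $H_{\xi}^{u}$ under conjugation and the identification $\omega_{\xi}(h)=\omega_{\xi}(n)$) to \cite[Theorem 3.12]{CM}. The paper itself only sketches this step and cites \cite{CM} for the full proof, exactly as you do, so your write-up is a faithful (indeed slightly more detailed) version of the intended argument.
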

\begin{proof}
For the proof see \cite[Theorem 3.12]{CM}. Here we only give the main idea and the intuition behind the proof. As the radial sequence $\{g_n\}_{n \geq 1}$ has the property that belongs to $H_{\xi}$ we have $\lim\limits_{n \to \infty}g_{n}(r(0))= \xi_{-} \in \partial_{\infty} X$ and  $\lim\limits_{n \to \infty}g_{n}(\xi)=\xi$. Then for every $g \in N$, by Lemma \ref{lem::radial_seq_prop} applied to $\{g_n\}_{n \geq 1}$, any of the convergent subsequence $\{ g_{n_k}gg_{n_k}^{-1}\}_{n\geq 1}$ has the corresponding limit $\lim\limits_{k \to \infty} g_{n_k}gg_{n_k}^{-1}=h$ in  $G_{\xi_{-},\xi}$. As $N$ is normalised by the radial sequence $\{g_n\}_{n \geq 1}$, we have moreover that $\lim\limits_{k \to \infty} g_{n_k}gg_{n_k}^{-1}=h \in N_{\xi_{-}}$. Because the properties of $g$ are preserved by each of the elements $g_{n_k}gg_{n_k}$, then also the limit $h$ inherits the properties of $g$ (i.e., elliptic if $g$ is elliptic, hyperbolic of the same translation length if $g$ is hyperbolic). This intuitively implies that the \textbf{horoaction} $\omega_{\xi}: H_{\xi} \to Isom(P(\xi, \xi{-}))$, defined in \cite[Definition 3.1]{CM}, restricted to $N$ has the property that $\omega_{\xi}(N)=\omega_{\xi}(N_{\xi_{-}})$. This will then give the decomposition $N = N_{\xi_{-}} \cdot N^{u}$, where $N^{u}:= N \cap H_{\xi}^{u}= N \cap \Ker(\omega_{\xi})$.
\end{proof}

\begin{proof}[Proof of Theorem \ref{main_thm2}]
The proof is the same as in \cite[Section 7.G]{CM1}. Still, because we work with subgroups $H \leq \Aut_{0}(\Delta)$, and not only with $\Aut_{0}(\Delta)$, we need to treat the general case of $\Delta$ being of any dimension $\geq 2$. In \cite[Section 7.G]{CM1}, because they work with $\Aut_{0}(\Delta)$, they can reduce directly to the dimension of $\Delta$ being $2$ by using the Theorem of Tits that every irreducible, locally finite thick affine building of dimension $\geq 3$ is Moufang (see \cite[Chapter 7]{AB}).

Let $\alpha$ be a root of $\partial_{\infty} \Delta$ and for what follows fix an apartment $\Sigma$ in $\Delta$ such that $\alpha \subset \partial_{\infty} \Sigma$. Then the pointwise stabiliser in $H$ of the ideal half-apartment $\alpha$ is denoted by $H_{\alpha}:=\{ g \in H \; \vert \; g(\alpha)=\alpha \text{ pointwise}\}$.  Then it is easy to see that $H_{\alpha}$ is a closed subgroup of $H$. Because $H$ is strongly transitive on $\Delta$ implying $H$ is strongly transitively on $\partial_{\infty} \Delta$ by \cite[Sec. 17.1]{Ga}, one has $H_{\alpha}$ acts transitively on $\mathcal{A}_{\partial_{\infty} \Delta}(\alpha)$,  the set of all apartments of $\partial_{\infty} \Delta$ that contain $\alpha$ as a half-apartment. We claim  $H_{\alpha} \cap U_{\alpha}(H)$ contains a subgroup that is still transitive on $\mathcal{A}_{\partial_{\infty} \Delta}(\alpha)$, where $U_{\alpha}(H)$ is the root group in $H$ associated with $\alpha$. This claim will definitely imply that $H$ is Moufang.  

It remains to prove the claim. Indeed, as $\partial_{\infty} \Delta$ is a spherical building, the number of chambers of $\alpha$ is finite, and implicitly the number of panels $P$ in $\alpha - \partial \alpha$ is also finite, say $N \geq 1$. We enumerate them by $P_1,P_2,...,P_{N}$ and for each $i \in \{1,...,N\}$, we choose an ideal point $\xi_{i}  \in \partial_{\infty} \Delta$ that is in the interior of the panel $P_{i}$.  By Remark \ref{rem::unipotent_radical}, for every $i \in \{1,...,N\}$, the closed subgroup $H_{\xi_i}^{u}$ fixes pointwise any chamber of $\partial_{\infty} \Delta$ that contains $\xi_{i}$, and equivalently that contains $P_{i}$ as a panel. Thus,  for every $i \in \{1,...,N\} $, the group $H_{\xi_i}^{u}$ fixes pointwise $\St_{\partial_{\infty} \Delta}(P_i)$. By Lemma \ref{lem::radial_points}, each $\xi_{i}$, for $i \in \{1,...,N\}$, is a radial limit point and we denote by $\{t_{i,n}\}_{n \geq 1}$ an associated radial sequence in $H_{\xi_i}$. By the proof of Lemma \ref{lem::radial_points} one can choose $\{t_{i,n}\}_{n \geq 1} \subset \Stab_{H}(\Sigma)$ in such a way that they are moreover translation automorphisms of $\Sigma$, for each $i \in \{1,...,N\}$. In particular, $\{t_{i,n}\}_{n \geq 1}$ normalises $H_{\xi_i}^{u}$, fixes pointwise the ideal boundary $\partial_{\infty} \Sigma$,  thus normalises $H_{\alpha}$ as well. Notice, $H_{\alpha} \leq \bigcap\limits_{i=1}^{N} H_{\xi_i}$.

We denote $H_{\alpha}=:V_{0}$, and for every $i \in \{1,...,N\}$ we put, recursively, $V_{i}:= V_{i-1} \cap H_{\xi_i}^{u}$. We have that $V_{0} \geq V_{1} \geq V_{2} \geq... \geq V_{N}$ and they are all closed subgroups of $H$. Then, by construction we have $V_{N}$ is a subgroup of $U_{\alpha}(H)$. We will prove by induction on $i \in \{1,...,N\}$ that each of $V_{i}$ acts transitively on $\mathcal{A}_{\partial_{\infty} \Delta}(\alpha)$, implying that $V_{N}$ is a subgroup of $U_{\alpha}(H)$ that acts transitively on $\mathcal{A}_{\partial_{\infty} \Delta}(\alpha)$. 

Take $i=1$. Then the radial sequence $\{t_{1,n}\}_{n \geq 1} \subset H_{\xi_1}$, with radial limit point $\xi_{1}$ normalises both $H_{\alpha} =V_{0}$ and $H_{\xi_1}^{u}$, and so also the intersection $V_{1}= V_{0} \cap H_{\xi_1}^{u}$. Then by Theorem \ref{thm::levi2} applied to $V_0$ (up to extraction a subsequence) we have $V_{0}= V_{0,\xi_{-1}} \cdot V_{1}$, where $\xi_{-1} \in \Opp(\xi_1) \cap \partial_{\infty} \Sigma$, and $V_1$ acts transitively on the $V_0$-orbits of $\xi_{-1}$ in $\Opp(\xi_1)$. Since $V_0$ acts transitively on $\mathcal{A}_{\partial_{\infty} \Delta}(\alpha)$, and since there is only one apartment in $\mathcal{A}_{\partial_{\infty} \Delta}(\alpha)$ which contains $\xi_{-1}$ and $\alpha$, one deduces $V_{1}$ acts transitively on $\mathcal{A}_{\partial_{\infty} \Delta}(\alpha)$ as well.

For $2 \leq i \leq N$ we do the same as above by replacing the index $1$ with $i$, and the index $0$ by $i-1$: The radial sequence $\{t_{i,n}\}_{n \geq 1} \subset H_{\xi_i}$, with radial limit point $\xi_{i}$ normalises $H_{\alpha}$ and $H_{\xi_j}^{u}$, for each $j \in \{1,...,N\}$, thus $V_{i-1}$ (by induction step), and so also the intersection $V_{i}= V_{i-1} \cap H_{\xi_i}^{u}$. Then by Theorem \ref{thm::levi2} applied to $V_{i-1}$ (up to extraction a subsequence) we have $V_{i-1}= V_{i-1,\xi_{-i}} \cdot V_{i}$, where $\xi_{-i} \in \Opp(\xi_i) \cap \partial_{\infty} \Sigma$, and $V_i$ acts transitively on the $V_{i-1}$-orbits of $\xi_{-i}$ in $\Opp(\xi_i)$. Since $V_{i-1}$ acts transitively on $\mathcal{A}_{\partial_{\infty} \Delta}(\alpha)$ (by the induction step), and since there is only one apartment in $\mathcal{A}_{\partial_{\infty} \Delta}(\alpha)$ which contains $\xi_{-i}$ and $\alpha$, one deduces $V_{i}$ acts transitively on $\mathcal{A}_{\partial_{\infty} \Delta}(\alpha)$ as well. The conclusion of the theorem follows.
\end{proof}

\section{Proof of Theorem \ref{thm::main3}}
Let $\Delta$ be an irreducible, locally finite thick affine building of dimension $\geq 2$ such that $\Aut_{0}(\Delta)$ is strongly transitive on $\Delta$, thus also on $\partial_{\infty} \Delta$ by \cite[Sec. 17.1]{Ga}.  For what follows, the irreducibility cannot be removed as noticed after the Corollary \ref{cor::simply_tran}. By Theorem \ref{main_thm2} $\Aut_{0}(\Delta)$ is Moufang, and so is the spherical building $\partial_{\infty} \Delta$ (in the sense of \cite[Definition 7.27]{AB}). Fix an apartment $\Sigma$ of $\partial_{\infty} \Delta$ and an ideal chamber $c_{+} \in \Ch(\Sigma)$ with its opposite chamber $c_{-} \in \Ch(\Sigma)$.  Denote by $\Phi$ the set of all roots of $\Sigma$ and by $\Phi^{+}$ the roots in $\Phi$ that contain $c$ as a chamber; we call $\Phi^{+}$ the set of positive roots with respect to $c_{+}$. We set $$U^{+}:= \langle U_{\alpha}(\Aut_{0}(\Delta)) \; \vert \; \alpha \in \Phi^{+}\rangle \text{ and } U^{-}:= \langle U_{\alpha}(\Aut_{0}(\Delta)) \; \vert \; -\alpha \in \Phi^{+}\rangle.$$

\begin{remark}
\label{rem::coro_2.3}
For any closed and strongly transitive subgroup $H$ of $\Aut_{0}(\Delta)$, Theorem \ref{main_thm2} and Corollary \ref{cor::equal_root_groups} tell us that we obtain the same groups $U^{+}$, resp., $U^{-}$ if we replace  $U_{\alpha}(\Aut_{0}(\Delta))$ by $ U_{\alpha}(H)$. 
\end{remark}

Because we work with the spherical building $\partial_{\infty} \Delta$ which is Moufang, we can apply the results of \cite[Chapter 6 and Thm. 6.18]{Ro}. We recall the notation. Let $$B^{+}:=\Stab_{\Aut_{0}(\Delta)}(c_{+})  \; \text{ and } \; B^{-}:=\Stab_{\Aut_{0}(\Delta)}(c_{-})$$ be the positive, resp., negative,  minimal parabolic subgroup of $\Aut_{0}(\Delta)$. 

For a closed subgroup $H \leq \Aut_{0}(\Delta)$ we set $$B(H)^{+}:= \Stab_{H}(c_{+}) \leq B^{+} \; \text{and} \;  B(H)^{-}:= \Stab_{H}(c_{-}) \leq B^{-}.$$

\begin{proposition}
\label{prop::normal_U}
Let  $H$ be a closed and strongly transitive subgroup of $\Aut_{0}(\Delta)$. Then  $B^{\pm} =  U^{\pm} \rtimes M$ and $B(H)^{\pm} = U^{\pm} \rtimes M(H)$, where $M:= B^{+} \cap B^{-}$ and $M(H):= B(H)^{+} \cap B(H)^{-}$. 
In particular, $U^+$ (resp., $U^{-}$) is a normal subgroup in both subgroups  $B^{+}$ and  $B(H)^{+}$ (resp., $B^{-}$ and $B(H)^{-}$).
\end{proposition}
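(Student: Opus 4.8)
The plan is to reduce everything to the classical Moufang-building structure theory for the spherical building $\partial_{\infty}\Delta$, as packaged in \cite[Chapter 6, Thm.~6.18]{Ro}, and then to restrict the resulting decomposition to the subgroup $H$. First I would recall from \cite{Ro} that, since $\partial_{\infty}\Delta$ is a thick irreducible Moufang spherical building with associated root groups $U_{\alpha} = U_{\alpha}(\Aut_{0}(\Delta))$, the minimal parabolic $B^{+} = \Stab_{\Aut_{0}(\Delta)}(c_{+})$ admits the Levi-type decomposition $B^{+} = U^{+} \rtimes M$ with $M = B^{+}\cap B^{-}$ the common stabilizer of the pair $(c_{+},c_{-})$ (equivalently, the stabilizer of the apartment $\Sigma$ together with $c_{+}$), and symmetrically $B^{-} = U^{-}\rtimes M$. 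Here one uses that $U^{+}$ is exactly the pointwise stabilizer in $B^{+}$ of the set of chambers opposite $c_{-}$ adjacent to $c_{+}$-data — more precisely, that $U^{+}$ acts simply transitively on the chambers opposite $c_{-}$ — so that $U^{+}\cap M = \{\id\}$, and that $M$ normalizes each $U_{\alpha}$ for $\alpha\in\Phi^{+}$, hence normalizes $U^{+}$; this gives the semidirect product and the normality of $U^{+}$ in $B^{+}$. The same argument with $c_{+}$ and $c_{-}$ interchanged handles $B^{-} = U^{-}\rtimes M$ and the normality of $U^{-}$ in $B^{-}$.

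Next I would transfer this to $H$. By Remark \ref{rem::coro_2.3} (which rests on Theorem \ref{main_thm2} and Corollary \ref{cor::equal_root_groups}), every root group $U_{\alpha}(H)$ coincides with $U_{\alpha}(\Aut_{0}(\Delta))$, so $U^{\pm}$ is already contained in $H$, and in fact $U^{\pm}\le B(H)^{\pm}$ since each $U_{\alpha}$ with $\alpha\in\Phi^{+}$ fixes $c_{+}$. Now for any $g\in B(H)^{+}\le B^{+}$, write $g = u\,m$ with $u\in U^{+}\le H$ and $m\in M$ using the decomposition of $B^{+}$; then $m = u^{-1}g\in H$, so $m\in M\cap H = B(H)^{+}\cap B(H)^{-} =: M(H)$ (using $m\in B^{+}\cap B^{-}$ and $m\in H$). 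This yields $B(H)^{+} = U^{+}\rtimes M(H)$, the intersection $U^{+}\cap M(H)$ being trivial since it already was inside $\Aut_{0}(\Delta)$, and $M(H)\le M$ normalizes $U^{+}$; hence $U^{+}\trianglelefteq B(H)^{+}$. The negative case is identical.

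The one point requiring genuine care — and the main potential obstacle — is checking that the decomposition $B^{+} = U^{+}\rtimes M$ is available in exactly this topological-group setting: \cite{Ro} works with abstract Moufang buildings and abstract BN-pairs, whereas here $B^{+}$, $U^{\pm}$, and $M$ are topological groups (closed subgroups of $\Aut_{0}(\Delta)$). One must confirm that the purely combinatorial statements of \cite[Chapter 6]{Ro} about the group generated by the root groups apply verbatim to $\Aut_{0}(\Delta)$ acting on $\partial_{\infty}\Delta$ — i.e., that $\Aut_{0}(\Delta)$, restricted to $\partial_{\infty}\Delta$, realizes the full Moufang structure — and that $M$ is indeed the stabilizer predicted by the theory rather than something larger. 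Since $\partial_{\infty}\Delta$ is irreducible, thick, of rank $\ge 2$ and Moufang by Theorem \ref{main_thm2}, and since strong transitivity of $H$ on $\Delta$ gives strong transitivity on $\partial_{\infty}\Delta$ (so the $BN$-pair axioms hold for $\Aut_{0}(\Delta)$ on $\partial_{\infty}\Delta$), this is a matter of citing the correct statements; no new geometric input beyond what is already established in Section~2 is needed. I would also remark that $M$ (resp.\ $M(H)$) stabilizes the apartment $\Sigma$ pointwise at infinity is not needed — only that it normalizes every positive root group, which follows from its being the stabilizer of the fundamental pair $(c_{+},c_{-})$ and the conjugation action on root groups described in \cite[Chapter 6]{Ro}.
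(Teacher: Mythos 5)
Your proposal is correct and follows essentially the same route as the paper, whose proof is a one-line citation of \cite[Chapter 6, Thms.~6.17 and 6.18]{Ro} together with Corollary \ref{cor::equal_root_groups} (via Remark \ref{rem::coro_2.3}); you have simply spelled out the transfer to $H$ that the paper leaves implicit. One small slip: $U^{+}$ acts simply transitively on the chambers opposite $c_{+}$, not those opposite $c_{-}$ (it fixes one of the latter, namely $c_{+}$ itself); the corrected statement is exactly what yields $U^{+}\cap M=\{\id\}$ and the factorization $g=um$, so nothing downstream in your argument is affected.
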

\begin{proof}
This follows directly from \cite[Chapter 6, Thms. 6.17 and 6.18]{Ro} and  Corollary \ref{cor::equal_root_groups} above.
\end{proof}

\begin{proposition}
\label{prop::normal_U_pm}
Let  $H$ be a closed and strongly transitive subgroup of $\Aut_{0}(\Delta)$. Then $\overline{\langle U^{+}, U^{-} \rangle}$ is normal in $H$ and $\langle U^{+}, U^{-} \rangle= H^{+}$.
\end{proposition}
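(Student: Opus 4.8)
The plan is to identify $\langle U^{+}, U^{-}\rangle$ with the little projective group of the Moufang spherical building $\partial_{\infty}\Delta$ and then quote its strong transitivity from \cite[Chapter 6]{Ro}. First observe that, since $\Phi = \Phi^{+}\sqcup(-\Phi^{+})$, we have $\langle U^{+}, U^{-}\rangle = \langle U_{\alpha}(\Aut_{0}(\Delta)) \mid \alpha \in \Phi\rangle$, the group generated by the root groups attached to the roots of the single apartment $\Sigma$. By Theorem \ref{main_thm2} both $H$ and $\Aut_{0}(\Delta)$ are Moufang, so Corollary \ref{cor::equal_root_groups} (equivalently Remark \ref{rem::coro_2.3}) gives $U_{\beta}(H) = U_{\beta}(\Aut_{0}(\Delta))$ for \emph{every} root $\beta$ of $\partial_{\infty}\Delta$; in particular $H^{+} = \langle U_{\beta}(\Aut_{0}(\Delta)) \mid \beta \text{ root of } \partial_{\infty}\Delta\rangle$, and each generator $U_{\alpha}(\Aut_{0}(\Delta))$, $\alpha \in \Phi$, of $\langle U^{+}, U^{-}\rangle$ is among the generators of $H^{+}$, so $\langle U^{+}, U^{-}\rangle \leq H^{+}$. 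The whole point is the reverse inclusion: the root groups of the one apartment $\Sigma$ already generate all root groups of $\partial_{\infty}\Delta$.

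For this I would use that, by the Moufang-implies-BN-pair results of \cite[Chapter 6]{Ro}, the group $\langle U^{+}, U^{-}\rangle$ carries a BN-pair: it contains the standard elements $m(u) \in U_{\alpha}\,u\,U_{\alpha}$ (for $u \in U_{\alpha}\setminus\{1\}$, $\alpha\in\Phi$), which together realise the full Weyl group $W$ of $\partial_{\infty}\Delta$, and consequently $\langle U^{+}, U^{-}\rangle$ acts strongly transitively on $\partial_{\infty}\Delta$ (equivalently, transitively on pairs of opposite ideal chambers, hence on the apartments of $\partial_{\infty}\Delta$). Now let $\beta$ be any root of $\partial_{\infty}\Delta$ and pick an apartment $\Sigma'$ of $\partial_{\infty}\Delta$ containing $\beta$. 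Choose $g \in \langle U^{+}, U^{-}\rangle$ with $g(\Sigma') = \Sigma$, so that $g(\beta) \in \Phi$. Root groups are $\Aut_{0}(\Delta)$-equivariant, so $U_{\beta}(\Aut_{0}(\Delta)) = g^{-1}\,U_{g(\beta)}(\Aut_{0}(\Delta))\,g$; since $U_{g(\beta)}(\Aut_{0}(\Delta)) \leq \langle U^{+}, U^{-}\rangle$ and $g \in \langle U^{+}, U^{-}\rangle$, we get $U_{\beta}(\Aut_{0}(\Delta)) \leq \langle U^{+}, U^{-}\rangle$. Feeding this and $U_{\beta}(H) = U_{\beta}(\Aut_{0}(\Delta))$ back into the definition of $H^{+}$ yields $H^{+} \leq \langle U^{+}, U^{-}\rangle$, hence $\langle U^{+}, U^{-}\rangle = H^{+}$.

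Normality is then immediate. By the above, $\langle U^{+}, U^{-}\rangle = \langle U_{\beta}(\Aut_{0}(\Delta)) \mid \beta \text{ root of } \partial_{\infty}\Delta\rangle$; for $h \in H \leq \Aut_{0}(\Delta)$, equivariance gives $h\,U_{\beta}(\Aut_{0}(\Delta))\,h^{-1} = U_{h(\beta)}(\Aut_{0}(\Delta))$, and $h$ merely permutes the set of all roots of $\partial_{\infty}\Delta$, so $h\langle U^{+}, U^{-}\rangle h^{-1} = \langle U^{+}, U^{-}\rangle$. Thus $\langle U^{+}, U^{-}\rangle$ is already a normal subgroup of $H$, and therefore so is its closure $\overline{\langle U^{+}, U^{-}\rangle}$.

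The main obstacle is the reduction from all roots of $\partial_{\infty}\Delta$ to the roots of a single apartment, i.e.\ the inclusion $H^{+} \leq \langle U^{+}, U^{-}\rangle$; this rests on the non-trivial strong transitivity of the little projective group of a Moufang spherical building, which I would cite from \cite[Chapter 6]{Ro} rather than reprove. A minor bookkeeping point, needed throughout and already used in Remark \ref{rem::coro_2.3}, is that the a priori $\Aut_{0}(\Delta)$-groups $U^{\pm}$ actually lie in $H$; this is exactly Corollary \ref{cor::equal_root_groups}, which (as noted after Corollary \ref{cor::simply_tran}) uses the irreducibility of $\Delta$.
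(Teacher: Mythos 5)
Your proposal is correct, and for the equality $\langle U^{+},U^{-}\rangle=H^{+}$ it follows essentially the same route as the paper: the paper also first shows that $\langle U^{+},U^{-}\rangle$ acts strongly transitively on $\partial_{\infty}\Delta$ (it does so in three explicit steps -- transitivity of $U^{+}$ on $\Opp(c_{+})$, transitivity on $\Ch(\Sigma)$ via \cite[Chapter 6, (M3) and Prop.\ 6.14]{Ro}, then strong transitivity -- where you outsource the whole package to the Moufang-implies-BN-pair theorem of \cite[Chapter 6]{Ro}), and then concludes exactly as you do with the equivariance $gU_{\alpha}g^{-1}=U_{g\alpha}$ from \cite[Lemma 7.25(1)]{AB}. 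Where you genuinely diverge is the normality statement. The paper proves normality \emph{first} and independently of the equality: it invokes the generation result $H=\langle H_{c_{+}}^{0},H_{c_{-}}^{0}\rangle$ from \cite[Prop.\ 4.11]{Cio} together with the decomposition $H_{c_{\pm}}^{0}=U^{\pm}\rtimes M(H)^{0}$ coming from Proposition \ref{prop::normal_U}, reducing the check of $gU^{\pm}g^{-1}\leq\langle U^{+},U^{-}\rangle$ to $g\in M(H)^{0}$. You instead deduce normality as an immediate corollary of $\langle U^{+},U^{-}\rangle=H^{+}$: conjugation by $h\in H$ permutes the root groups $U_{\beta}$ over all roots $\beta$ of $\partial_{\infty}\Delta$, so $H^{+}$ (and hence its closure) is normal. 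Your argument is cleaner and avoids the dependence on \cite[Prop.\ 4.11]{Cio}, at the cost of making normality logically downstream of the equality; both orderings are sound, and your bookkeeping remark that $U^{\pm}\leq H$ rests on Corollary \ref{cor::equal_root_groups} (hence on irreducibility) is exactly the role of Remark \ref{rem::coro_2.3} in the paper.
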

\begin{proof}
Let us prove first that $\overline{\langle U^{+}, U^{-} \rangle}$ is normal in $H$.  Indeed, by \cite[Prop. 4.11]{Cio} we know $H=\langle H_{c_{+}}^{0},H_{c_{-}}^{0}\rangle$, where $H_{c_{\pm}}^{0}= (B(H)^{\pm})^{0}:=\{ g \in B(H)^{\pm} \; \vert \; g \text{ elliptic} \}$. Notice $U^{\pm} \leq H_{c_{\pm}}^{0}$ and by Proposition \ref{prop::normal_U} we have  $U^{\pm}$ is normal in $H_{c_{\pm}}^{0}$. To prove $\overline{\langle U^{+}, U^{-} \rangle}$ is normal in $H$ it is enough to show that for every $g \in H_{c_{\pm}}^{0}$ we have $g U^{\pm}g^{-1} \leq \langle U^{+}, U^{-} \rangle$. As $H_{c_{\pm}}^{0}= U^{\pm} \rtimes M(H)^{0}$, where  $M(H)^{0}:=\{g \in M(H) \; \vert \; g \text{ elliptic} \}$, it is enough to verify $g U^{\pm}g^{-1} \leq \langle U^{+}, U^{-} \rangle$ only for $g \in M(H)^{0}$ and this follows easily because $U^{\pm}$  is normal in $H_{c_{\pm}}^{0}$.

Let us now prove that $\langle U^{+}, U^{-} \rangle= H^{+}$. Indeed, it is trivial that $\langle U^{+}, U^{-} \rangle \leq H^{+}$. Thus it remains to show that for every root $\alpha$ of $\partial_{\infty}\Delta$ we have $U_{\alpha} \leq \langle U^{+}, U^{-} \rangle$. To do that we first prove that $U^{+}$ acts transitively on the set of all chambers $c \in \Ch(\partial_{\infty} \Delta)$ that are opposite $c_+$. In that respect, it is enough to show that for every chamber $c \in \Ch(\partial_{\infty} \Delta)$ opposite $c_{+}$ there is an element $g \in U^{+}$ such that $g(c) = c_{-}$. By taking an apartment $\Sigma_{1} \in \partial_{\infty} \Delta$ containing the chambers $c$ and $c_{+}$, and using the transitivity properties of the root groups $U_{\beta}(H)=U_{\beta}(\partial_{\infty} \Delta)$ for $\beta \in \Phi^{+}$, one can send one by one the chambers of a gallery in $\Sigma_1$ from $c$ to $c_{+}$ to chambers in $\Sigma$. This will produce an element $g \in U^{+}$ with $g(c)=c_{-}$.

Secondly, notice that by the properties of the root groups (see for example \cite[Chapter 6, Section 4, property (M3) and Prop. (6.14)]{Ro}) we have that $\langle U^{+}, U^{-} \rangle$ acts transitively on the set of all chambers of $\Sigma$. In particular, the spherical Weyl group associated with $\partial_{\infty} \Delta$ is contained in $\langle U^{+}, U^{-} \rangle$. 

Thirdly, we prove that $\langle U^{+}, U^{-} \rangle$ acts strongly transitively on $\partial_{\infty} \Delta$. For that, it is again enough to show that for every apartment $\Sigma_1$ in $\partial_{\infty} \Delta$ and every chamber $c_1 \in \Ch(\Sigma_1)$ there is $g \in \langle U^{+}, U^{-} \rangle$ such that $g(\Sigma_1)=\Sigma$ and $g(c_1)=c_{+}$. Then take an apartment $\Sigma_2$ in $\partial_{\infty} \Delta$ with $c_1,c_{+} \in \Ch(\Sigma_2)$. By the first step above and considering the chamber in $\Sigma_2$ opposite $c_{+}$, there is an element $g_2 \in \langle U^{+}, U^{-} \rangle$ with $g_2(\Sigma_2)=\Sigma$ and $g_2(c_{+})=c_{+}$, so $g_{2}(c_1) \in \Ch(\Sigma)$. By applying again the first step in combination with the second one, there is an element $g_1 \in \langle U^{+}, U^{-} \rangle$ with $g_1(g_2(c_1))=g_2(c_1)$ and $g_1(g_2(\Sigma_1))=\Sigma$. To conclude this third step we then apply the second step in order to obtain $g_0 \in \langle U^{+}, U^{-} \rangle$ with $g_0(\Sigma)=\Sigma$ (not pointwise) and $g_0(g_1(g_2(c_1)))=g(g_2(c_1))=c_{+}$. Then $g:=g_0g_1g_2$ will do the job.

To conclude the proof of the second part of the proposition, we use the strongly transitive action of $\langle U^{+}, U^{-} \rangle$ on $\partial_{\infty} \Delta$ and the fact that $g U_{\alpha} g^{-1}=U_{g\alpha}$ for any root $\alpha$ of $\partial_{\infty} \Delta$ and any $g \in \Aut_{0}(\Delta)$ (see \cite[Lemma 7.25(1)]{AB}).
\end{proof}

\begin{proof}[Proof of Theorem \ref{thm::main3}]

By Theorem \ref{main_thm2} $\Aut_{0}(\Delta)$ is Moufang. As moreover $\Delta$ is irreducible, by Theorem \ref{main_thm22} there exist a non-Archimedean local field $k$ and a semisimple, (absolutely) simple algebraic group $\G$ over $k$ and of $k$-rank $\geq 2$ such that $G= \G(k)$ acts by type-preserving automorphisms and strongly transitively on $\Delta$.  Let $H \leq \Aut_{0}(\Delta)$ be a topologically simple, closed and strongly transitive subgroup. By Theorem \ref{main_thm2} the group $H$ is Moufang also. Then by Corollary \ref{cor::equal_root_groups}, for every root $\alpha$ of $\partial_{\infty} \Delta$ we have $U_{\alpha}(\Aut_{0}(\Delta))= U_{\alpha}(H)$. By Remark \ref{rem::coro_2.3} and Proposition \ref{prop::normal_U_pm} we have $\overline{\langle U^{+}, U^{-} \rangle}$ is normal in $H$. As $H$ is topologically simple and $\overline{\langle U^{+}, U^{-} \rangle}$ is closed, we obtain $H = \overline{\langle U^{+}, U^{-} \rangle}=\overline{H^{+}}$.  By replacing $H$ with $G$, we have $\overline{\langle U^{+}, U^{-} \rangle}$ is normal in $G$ and so also in $G^{+}$. By \cite[Theorem 1.5.6]{Ma} one conclude that $G^{+}$ is contained in $\overline{\langle U^{+}, U^{-} \rangle}$, which is a subgroup of $G^{+}$ by definition (recall $G^+$ is closed and normal in $G$). Thus $G^{+} = \overline{\langle U^{+}, U^{-} \rangle}$, so $G^{+}=H$ and we are done.

\end{proof}

\section{Proof of Theorem \ref{thm::main}}
\label{sec::main_thm}
In this section we work in the setting where $\Delta$ is a locally finite thick affine building and $G$ a  closed subgroup of $\Aut_{0}(\Delta)$ that acts strongly transitively on $\Delta$. Fix for what follows an apartment $\mathcal{A}$ of $\Delta$ and a special vertex $x_0 \in \mathcal{A}$. 

Let $\alpha:= \{a_n\}_{n \geq 1}$ be a sequence in $G$ that escapes every compact subset of $G$. Then, as the space $\Delta \cup \partial_{\infty} \Delta$ is compact with respect to the cone topology, the sequence $\{a_n\}_{n \geq 1}$ admits a subsequence $\beta:=\{a_{n_k}\}_{k\geq 1}$ with the property that $\{a_{n_k}(x_0)\}_{k\geq 1}$ converges to an ideal point $\xi \in \partial_{\infty} \Delta$. Notice, there is a unique minimal ideal simplex $\sigma \subset \partial_{\infty} \Delta$ such that $\xi$ is in the interior of $\sigma$. One can have anything in-between  $\sigma$ is an ideal chamber or $\sigma$ is an ideal vertex of the spherical building $\partial_{\infty} \Delta$. Recall $G_{\sigma}:=\{ g \in G \; \vert \; g(\sigma)=\sigma\}$ is called a \textbf{parabolic subgroup} of $G$ and it is closed. $G_{\sigma}$ is maximal when $\sigma$ is an ideal vertex in the spherical building $\partial_{\infty} \Delta$, and minimal when $\sigma$ is an ideal chamber of the spherical building $\partial_{\infty} \Delta$. 

\begin{definition}
\label{def::st_cone_N}
Let $\sigma$ be an ideal simplex in $\partial_{\infty} \Delta$ and choose an apartment $\mathcal{B}$ of $\Delta$  such that $\sigma \subset \partial_{\infty} \mathcal{B}$. Let $x \in \mathcal{B}$ be a point. We set $\st(\sigma,\partial_{\infty} \mathcal{B}):= \{c \in \Ch(\partial_{\infty} \mathcal{B}) \; \vert \; \sigma \subset c\}$, resp., $\st(x, \mathcal{B}):= \{c \in \Ch(\mathcal{B}) \; \vert \; x \text{ in the closure of } c\}$, and call them the \textbf{$\partial_{\infty} \mathcal{B}$-star of $\sigma$}, resp., the \textbf{$\mathcal{B}$-star of $x$}.  Notice, $\st(\sigma,\partial_{\infty} \mathcal{B})$, resp., $\st(x, \mathcal{B})$, are both a finite set of ideal chambers, resp., chambers. As well,  the associated \textbf{$\sigma$-cone in $\mathcal{B}$ with base-point $x \in \mathcal{B}$} is defined by $Q(\sigma, \mathcal{B},x):= \bigcup\limits_{c \in \st(\sigma,\partial_{\infty} \mathcal{B})} Q(c,\mathcal{B},x)$, where $Q(c,\mathcal{B},x) \subset \mathcal{B}$ is the Weyl sector emanating from the point $x$ and having $c$ as ideal chamber.  A  \textbf{$\sigma$-cone in $\mathcal{B}$} is  $Q(\sigma,\mathcal{B},x)$ for some $x \in \mathcal{B}$. Then we take 
$$G_{\st(\sigma,\partial_{\infty} \mathcal{B})}^{0}:=\{ g \in G \; \vert \; g \text{ fixes pointwise a  $\sigma$-cone in } \mathcal{B}\}$$
and it is easy to see this is a closed subgroup of $G$. Notice, the intersection of two $\sigma$-cones in $\mathcal{B}$ contains a $\sigma$-cone in $\mathcal{B}$.
\end{definition}

Notice that Definition \ref{def::st_cone_N} is a weakening of Definition \ref{def::unipotent_radical}.

Let $\alpha:= \{a_n\}_{n \geq 1}$ be a sequence in $G$ and associated with it consider the set 
$$U^{+}_{\alpha}:=\{g \in G \; \vert \; \lim\limits_{n \to \infty}  a_{n}^{-1}g a_{n}=e \}$$
that is a subgroup of $G$, called the \textbf{positive contraction group of $\alpha$}. In the same way, but using $a_{n}g a_{n}^{-1}$ one defines  the \textbf{negative contraction group} $U^{-}_{\alpha}$ of $\alpha$. In general the contraction groups are not closed in $G$.

\begin{lemma}
\label{lem::contr_group_alpha}
Let $\mathcal{B}$ be an apartment of $\Delta$. Let $\alpha:= \{a_n\}_{n \geq 1}$ be a sequence in $G$ and $g$ a hyperbolic element in $ \Stab_{G}(\mathcal{B})$. Assume $\{a_{n}(x_0)\}_{n \geq 1} \subset \mathcal{B}$  converges to an ideal point $\xi \in \partial_{\infty} \mathcal{B}$ contained in the interior of the ideal simplex $\sigma \subset \partial_{\infty} \mathcal{B}$. 

Then $U^{+}_{\alpha} \leq G_{\st(\sigma,\partial_{\infty} \mathcal{B})}^{0} \leq G_{\sigma}$, and $G_{\st(\sigma,\partial_{\infty} \mathcal{B})}^{0}$ is normalized by $g$, i.e.,  $gG_{\st(\sigma,\partial_{\infty} \mathcal{B})}^{0}g^{-1}= G_{\st(\sigma,\partial_{\infty} \mathcal{B})}^{0}$.
\end{lemma}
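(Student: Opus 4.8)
\medskip
\noindent\textbf{Strategy.} The statement has three parts: $G^{0}_{\st(\sigma,\partial_{\infty}\mathcal{B})}\leq G_{\sigma}$, the $g$-invariance, and $U^{+}_{\alpha}\leq G^{0}_{\st(\sigma,\partial_{\infty}\mathcal{B})}$. I would dispatch the first two quickly and put the effort into the third. If $h\in G$ fixes pointwise some $\sigma$-cone $Q(\sigma,\mathcal{B},x)$, it fixes pointwise its visual boundary, which by construction equals the closed star $\st(\sigma,\partial_{\infty}\mathcal{B})$, a union of closures of ideal chambers each having $\sigma$ as a face; in particular $h$ fixes $\sigma$, so $h\in G_{\sigma}$. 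For the $g$-invariance, $g\in\Stab_{G}(\mathcal{B})$ sends each Weyl sector $Q(c,\mathcal{B},x)$ of $\mathcal{B}$ to $Q(g(c),\mathcal{B},g(x))$, hence the $\sigma$-cone $Q(\sigma,\mathcal{B},x)=\bigcup_{\sigma\subset c}Q(c,\mathcal{B},x)$ to $Q(g(\sigma),\mathcal{B},g(x))$; as the hyperbolic element $g$ translates the flat $\mathcal{B}$ it acts trivially on $\partial_{\infty}\mathcal{B}$, so $g(\sigma)=\sigma$ and $g$ carries $\sigma$-cones of $\mathcal{B}$ to $\sigma$-cones of $\mathcal{B}$. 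Hence if $h$ fixes pointwise $Q(\sigma,\mathcal{B},x)$, then $ghg^{-1}$ fixes pointwise $g(Q(\sigma,\mathcal{B},x))=Q(\sigma,\mathcal{B},g(x))$, i.e. $ghg^{-1}\in G^{0}_{\st(\sigma,\partial_{\infty}\mathcal{B})}$; running the same argument for $g^{-1}$ yields the equality.

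\medskip
\noindent\textbf{The inclusion $U^{+}_{\alpha}\leq G^{0}_{\st(\sigma,\partial_{\infty}\mathcal{B})}$.} Let $h\in U^{+}_{\alpha}$, so $a_{n}^{-1}ha_{n}\to e$. Because $\Aut_{0}(\Delta)$ carries the topology of pointwise convergence on the discrete vertex set, for every $R\geq 0$ there is $N(R)$ such that $a_{n}^{-1}ha_{n}$ fixes the ball $B(x_{0},R)$ of radius $R$ about $x_{0}$ pointwise for every $n\geq N(R)$; equivalently, $h$ fixes pointwise $B(a_{n}(x_{0}),R)=a_{n}(B(x_{0},R))$ for every $n\geq N(R)$. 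Taking $R=0$ shows $h$ fixes $a_{n}(x_{0})$ for $n$ large; since conjugation preserves translation length and automorphisms of buildings are semisimple, the translation length of $h$ equals that of $a_{n}^{-1}ha_{n}$, which is $\leq d(x_{0},a_{n}^{-1}ha_{n}(x_{0}))=0$, so $h$ is elliptic and $\Fix_{\Delta}(h)$ is a non-empty closed convex subcomplex of $\Delta$. As $a_{n}(x_{0})\to\xi$ in the cone topology and $h$ acts continuously on $\Delta\cup\partial_{\infty}\Delta$, we obtain $h(\xi)=\lim_{n}h(a_{n}(x_{0}))=\lim_{n}a_{n}(x_{0})=\xi$; being type-preserving, $h$ then fixes the minimal ideal simplex $\sigma$ of $\xi$ pointwise. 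Finally, from the above $\Fix_{\Delta}(h)$ contains the \emph{full} building balls $B(a_{n}(x_{0}),R_{n})$ for some sequence $R_{n}\to\infty$, with all centres $a_{n}(x_{0})\in\mathcal{B}$ and $a_{n}(x_{0})\to\xi$.

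\medskip
\noindent It remains to show that $h$ fixes pointwise a $\sigma$-cone in $\mathcal{B}$. Put $F:=\Fix_{\Delta}(h)\cap\mathcal{B}$; since apartments are convex, $F$ is a closed convex subcomplex of $\mathcal{B}\cong\RR^{d}$ ($d=\dim\Delta$), hence an intersection of finitely many half-apartments of $\mathcal{B}$. It contains the Euclidean balls $B(a_{n}(x_{0}),R_{n})$ and, being closed convex and containing the sequence $a_{n}(x_{0})\to\xi$, it contains the geodesic ray of $\mathcal{B}$ towards $\xi$ issued from any of its points; so $\xi\in\partial_{\infty}F$. The key claim is that $\xi$ is an \emph{interior} point of $\partial_{\infty}F$ in $\partial_{\infty}\mathcal{B}$. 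Granting it: $\partial_{\infty}F$ is a subcomplex of $\partial_{\infty}\mathcal{B}$ containing an open neighbourhood of the interior point $\xi$ of $\sigma$, so for every ideal chamber $c$ with $\sigma\subset c$ the interior of $c$ meets this neighbourhood, whence (a subcomplex containing an interior point of a cell contains the closed cell) $\overline{c}\subset\partial_{\infty}F$; thus $\partial_{\infty}F\supseteq\bigcup_{\sigma\subset c}\overline{c}=\st(\sigma,\partial_{\infty}\mathcal{B})$. Since $\st(\sigma,\partial_{\infty}\mathcal{B})$ is exactly the visual boundary of every $\sigma$-cone in $\mathcal{B}$, the recession cone of $F$ contains the $\sigma$-cone based at any chosen origin, and therefore $F\supseteq Q(\sigma,\mathcal{B},x)$ for some $x\in\mathcal{B}$. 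Hence $h$ fixes pointwise the $\sigma$-cone $Q(\sigma,\mathcal{B},x)$, i.e. $h\in G^{0}_{\st(\sigma,\partial_{\infty}\mathcal{B})}$, and with the first paragraph we conclude $U^{+}_{\alpha}\leq G^{0}_{\st(\sigma,\partial_{\infty}\mathcal{B})}\leq G_{\sigma}$.

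\medskip
\noindent\textbf{Main obstacle.} Everything is routine except the claim that $\xi$ is interior to $\partial_{\infty}F$. Were it not, some defining half-apartment $\alpha$ of $F$ would have $\xi$ on its ideal wall $\partial\alpha$, and then $F\subset\alpha$ together with $B(a_{n}(x_{0}),R_{n})\subset F$ would only force $d(a_{n}(x_{0}),\partial\alpha)\geq R_{n}\to\infty$, which is not contradictory inside the single apartment $\mathcal{B}$. The work lies in excluding this by exploiting that $\Fix_{\Delta}(h)$ contains the \emph{entire} $\Delta$-balls $B(a_{n}(x_{0}),R_{n})$ — not merely their traces in $\mathcal{B}$ — and that $a_{n}(x_{0})\to\xi$ pushes the centres into every cone-neighbourhood of $\xi$; I expect to argue by passing to auxiliary apartments through $\xi$ that meet the chambers of $B(a_{n}(x_{0}),R_{n})$ lying outside $\mathcal{B}$, this being the place where local finiteness (properness) of $\Delta$ and the building axioms are genuinely used.
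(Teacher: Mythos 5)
Your treatment of the two easy assertions is fine and coincides with the paper's: $G^{0}_{\st(\sigma,\partial_{\infty}\mathcal{B})}\leq G_{\sigma}$ is immediate, and the conjugation argument for $gG^{0}_{\st(\sigma,\partial_{\infty}\mathcal{B})}g^{-1}=G^{0}_{\st(\sigma,\partial_{\infty}\mathcal{B})}$ (using that a hyperbolic $g\in\Stab_{G}(\mathcal{B})$ fixes $\partial_{\infty}\mathcal{B}$ pointwise and carries $\sigma$-cones of $\mathcal{B}$ to $\sigma$-cones of $\mathcal{B}$) is exactly the one in the paper. But the heart of the lemma is the inclusion $U^{+}_{\alpha}\leq G^{0}_{\st(\sigma,\partial_{\infty}\mathcal{B})}$, and there your proof stops at the point where the work begins: you state the ``key claim'' that $\xi$ is interior to $\partial_{\infty}F$ and then, in your final paragraph, concede that your own setup does not prove it. This is a genuine gap, and it is not merely a missing computation: the route you chose cannot close it. From $a_{n}^{-1}ha_{n}\to e$ you only get that $h$ fixes balls $B(a_{n}(x_{0}),R_{n})$ with $R_{n}\to\infty$, but nothing controls $R_{n}$ relative to $d(x_{0},a_{n}(x_{0}))$; if $R_{n}$ grows sublinearly in that distance, the Euclidean convex hull in $\mathcal{B}$ of those balls is a ``parabolic'' tube containing no full-dimensional cone at all, so metric convexity inside the single apartment can never produce a $\sigma$-cone.

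The paper closes the gap with a different, combinatorial ingredient that your argument never invokes: the fixed-point set of a type-preserving automorphism is convex in the gallery/combinatorial sense, so it is an intersection of half-apartments once you know enough of it. Concretely, one only needs a \emph{single} fixed radius: choose $R$ so that $B(a_{n}(x_{0}),R)$ contains the full star $\st(a_{n}(x_{0}),\mathcal{B})$ of one special vertex $x:=a_{n}(x_{0})$ with $n$ large; since $h$ also fixes $\xi$, hence (being type-preserving) the ideal simplex $\sigma$, the element $h$ must fix pointwise the combinatorial convex hull in $\mathcal{B}$ determined by $\st(x,\mathcal{B})$ and $\sigma$, i.e.\ the intersection of all half-apartments of $\mathcal{B}$ containing the star of $x$ and having $\sigma$ in their ideal boundary, and this region contains the $\sigma$-cone $Q(\sigma,\mathcal{B},x)$. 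This is the rigidity you were looking for when you proposed ``passing to auxiliary apartments'': fixing one full vertex star together with a simplex at infinity already forces fixing an entire cone, with no quantitative control on the $R_{n}$ needed. You should replace your ``key claim'' paragraph by this combinatorial convex hull argument.
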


\begin{proof} It is clear  $G_{\st(\sigma,\partial_{\infty} \mathcal{B})}^{0} \leq G_{\sigma}$. To prove $U^{+}_{\alpha} \leq  G_{\st(\sigma,\partial_{\infty} \mathcal{B})}^{0} $ one just needs to apply the definition of $U^{+}_{\alpha}$, to notice that each element of $U^{+}_{\alpha}$ fixes point-wise larger and larger balls in $\Delta$ that are centered in $\{a_{n}(x_0)\}_{n \geq 1}$, and by the continuity of the $G$-action on $\partial_{\infty} \Delta$, that element of $U^{+}_{\alpha}$  stabilizes $\sigma \subset \partial_{\infty} \mathcal{B}$. Then if an element $h \in G$ fixes point-wise the $\mathcal{B}$-star of a special vertex $x$ in $\mathcal{B}$ and stabilizes the ideal simplex $\sigma \subset \partial_{\infty} \mathcal{B}$, then $h$ fixes point-wise the combinatorial convex hall in $\mathcal{B}$ determined by $\st(x,\mathcal{B})$ and $\sigma$, and this must contain the $\sigma$-cone $Q(\sigma,\mathcal{B},x)$.

Now let us prove $gG_{\st(\sigma,\partial_{\infty} \mathcal{B})}^{0}g^{-1}= G_{\st(\sigma,\partial_{\infty} \mathcal{B})}^{0}$. Indeed, as $g$ is a hyperbolic element in $ \Stab_{G}(\mathcal{B})$, then $g (\partial_{\infty} \mathcal{B})= \partial_{\infty} \mathcal{B}$ point-wise. Then it is clear if $h \in G_{\st(\sigma,\partial_{\infty} \mathcal{B})}^{0}$ fixes point-wise the $\sigma$-cone $Q(\sigma, \mathcal{B},x)$, then $g (Q(\sigma, \mathcal{B},x))= Q(\sigma, \mathcal{B}, g(x))$ is again a $\sigma$-cone in $\mathcal{B}$ that is point-wise fixed by $ghg^{-1}$.  Thus $gG_{\st(\sigma,\partial_{\infty} \mathcal{B})}^{0}g^{-1} \subseteq G_{\st(\sigma,\partial_{\infty} \mathcal{B})}^{0}$. In the same way one can prove $g^{-1}G_{\st(\sigma,\partial_{\infty} \mathcal{B})}^{0}g \subseteq G_{\st(\sigma,\partial_{\infty} \mathcal{B})}^{0}$, and we are done.

\end{proof}

The following is a generalization of \cite[Prop. 4.3]{BM00b} to the case of affine buildings, and its proof is the same. From the point of view of finding $H$-invariant vectors of unitary representations of $G$, with $H \leq G$ as large as possible, it gives better results than Mautner's phenomenon. 

\begin{proposition}
\label{prop:BM}
Let $G$ be a closed and type-preserving subgroup of $\Aut(\Delta)$.
Let $\sigma \subset \partial_{\infty} \Delta$ be an ideal simplex, $\mathcal{B}$ an apartment of $\Delta$ with $\sigma \subset \partial_{\infty} \mathcal{B}$, and $x_0 \in \mathcal{B}$ a special vertex. Let $(\pi, \mathcal{H}, \left\langle \cdot , \cdot \right\rangle)$ be a strongly continuous unitary representation of $G_{\sigma}$ and $v,w \in \mathcal{H}$ nonzero unit vectors. Suppose there is a sequence $\{a_n\}_{n \geq 1} \subset \Stab_{G}(\mathcal{B})$  of hyperbolic elements such that $\{a_n(x_0)\}_{n\geq 1}$ converges to an ideal point $\xi$ in the interior of $\sigma$, and $\langle \pi(a_n)(v), w \rangle$ does not vanish at infinity, when $n \to \infty$. Then:
\begin{enumerate}
\item
\label{prop::BM1}
$G_{\st(\sigma,\partial_{\infty} \mathcal{B})}^{0} = C_0 \cup \bigcup\limits_{n \geq 1}a_n C_0 a_n^{-1}$, where $C_0:= G_{x_0} \cap G_{\st(\sigma,\partial_{\infty} \mathcal{B})}^{0}$
\item
\label{prop::BM2}
The subspace $\mathcal{H}^{\infty}:=\{ w \in \mathcal{H} \; \vert \; \Stab_{G_{\st(\sigma,\partial_{\infty} \mathcal{B})}^{0}}(w) \text{ is open in }  G_{\st(\sigma,\partial_{\infty} \mathcal{B})}^{0}\}$ is dense in $\mathcal{H}$
\item
\label{prop::BM3}
 There exists $w_1 \neq 0 \in \mathcal{H}$ such that $\Stab_{G_{\st(\sigma,\partial_{\infty} \mathcal{B})}^{0}}(w_1)$ is of finite index in $G_{\st(\sigma,\partial_{\infty} \mathcal{B})}^{0}$.
\end{enumerate}
\end{proposition}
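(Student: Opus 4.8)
The plan is to mimic \cite[Prop.~4.3]{BM00b} verbatim, transporting the tree argument to the affine-building setting using the cone structure introduced in Definition~\ref{def::st_cone_N} in place of the ``half-tree'' structure. First I would prove \eqref{prop::BM1}. The inclusion $\supseteq$ is immediate since $G_{x_0}\cap G_{\st(\sigma,\partial_\infty\mathcal{B})}^0$ and each conjugate $a_nC_0a_n^{-1}$ sit inside $G_{\st(\sigma,\partial_\infty\mathcal{B})}^0$ by Lemma~\ref{lem::contr_group_alpha} (the $a_n$ being hyperbolic elements of $\Stab_G(\mathcal{B})$, they normalise $G_{\st(\sigma,\partial_\infty\mathcal{B})}^0$). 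For $\subseteq$, take $h\in G_{\st(\sigma,\partial_\infty\mathcal{B})}^0$, so $h$ fixes pointwise some $\sigma$-cone $Q(\sigma,\mathcal{B},y)$ in $\mathcal{B}$. Since $\{a_n(x_0)\}$ converges to the interior point $\xi$ of $\sigma$ along $\mathcal{B}$, the base-points $a_n(x_0)$ eventually enter any fixed $\sigma$-cone; equivalently, after conjugating by $a_n^{-1}$ the element $a_n^{-1}ha_n$ fixes pointwise a $\sigma$-cone with base-point $x_0$ once $n$ is large, hence $a_n^{-1}ha_n\in C_0$, i.e. $h\in a_nC_0a_n^{-1}$. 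Here I must be slightly careful about types and about the fact that $Q(\sigma,\mathcal{B},a_n(x_0))\subseteq Q(\sigma,\mathcal{B},y)$ only for $n$ large; the ``intersection of two $\sigma$-cones contains a $\sigma$-cone'' remark and the type-preserving hypothesis handle this, exactly as the half-tree inclusions do in the tree case.

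Next, \eqref{prop::BM2}. The hypothesis that $c_{v,w}(a_n)=\langle\pi(a_n)v,w\rangle$ does not vanish at infinity means, up to passing to a subsequence, that $|\langle\pi(a_n)v,w\rangle|\geq\epsilon>0$ for all $n$. Writing $\mathcal{H}^\infty$ for the set of vectors with open stabiliser in $K:=G_{\st(\sigma,\partial_\infty\mathcal{B})}^0$, note $\mathcal{H}^\infty$ is a (not necessarily closed) linear subspace; if it were not dense, its orthogonal complement would be a nonzero closed $K$-invariant subspace containing no vector with open stabiliser. I would then derive a contradiction with the non-vanishing of $\langle\pi(a_n)v,w\rangle$: decompose $v=v'+v''$ with $v'\in\overline{\mathcal{H}^\infty}$ and $v''$ orthogonal to it, and similarly for $w$; the $a_n$-matrix coefficient of the part living outside $\overline{\mathcal{H}^\infty}$ tends to $0$ because, by \eqref{prop::BM1}, $K$ is exhausted by the conjugates $a_nC_0a_n^{-1}$ with $C_0$ compact-open in $K$, so vectors ``spread out'' under the $a_n$-conjugated $C_0$-action; this is the standard averaging/Mautner-type argument of \cite{BM00b}, and it forces $\langle\pi(a_n)v',w'\rangle$ to carry the mass, whence $\overline{\mathcal{H}^\infty}\neq 0$ and in fact $\overline{\mathcal{H}^\infty}=\mathcal{H}$ by irreducibility-free $K$-invariance considerations (a nonzero $K$-invariant closed subspace on which no vector has open stabiliser cannot exist by the same argument applied inside it). Finally \eqref{prop::BM3} is a clean consequence of \eqref{prop::BM2}: $C_0$ is compact, hence of finite covolume, and a dense set of vectors has stabiliser open in $K$; picking any nonzero $w_1\in\mathcal{H}^\infty$, its stabiliser $S$ is open in $K$, and since $C_0\leq K$ is compact-open, $S\cap C_0$ is open in $C_0$ hence of finite index in $C_0$; combining with \eqref{prop::BM1}, which writes $K$ as a countable union of $C_0$-conjugates, a Baire/counting argument shows $S$ itself has finite index in $K$ (one uses that $K$ acts on the discrete coset space $K/S$ and the image of $C_0$, being finite, together with the conjugates, generates a finite group).

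The main obstacle I anticipate is making the ``spreading out'' step in \eqref{prop::BM2} rigorous in the building setting: in \cite{BM00b} one uses very explicitly the simplicial/combinatorial structure of half-trees and the fact that conjugation by the hyperbolic $a_n$ pushes the compact-open group $C_0$ onto subgroups fixing ever-larger balls around $a_n(x_0)$, so that $\pi$-averaged vectors become almost $K$-invariant; here the analogue requires knowing that $C_0=G_{x_0}\cap G_{\st(\sigma,\partial_\infty\mathcal{B})}^0$ is compact and open in $K=G_{\st(\sigma,\partial_\infty\mathcal{B})}^0$, and that the conjugates $a_nC_0a_n^{-1}$ form an increasing (or at least cofinal) exhausting family whose union is $K$ — precisely statement \eqref{prop::BM1}. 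Once \eqref{prop::BM1} is in hand the representation-theoretic part is formal and identical to \cite{BM00b}; the geometric content is entirely in \eqref{prop::BM1}, and that is where the cone combinatorics of Definition~\ref{def::st_cone_N} (intersection of $\sigma$-cones contains a $\sigma$-cone, type-preservation, convergence of $a_n(x_0)$ into every $\sigma$-cone) must be deployed carefully.
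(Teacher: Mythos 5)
Your treatment of part \eqref{prop::BM1} matches the paper's: the convexity of a $\sigma$-cone plus the convergence $a_n(x_0)\to\xi$ in the interior of $\sigma$ gives $Q(\sigma,\mathcal{B},a_n(x_0))\subset Q(\sigma,\mathcal{B},y)$ for $n$ large, hence the exhaustion by the conjugates $a_nC_0a_n^{-1}$. But parts \eqref{prop::BM2} and \eqref{prop::BM3} contain genuine problems. For \eqref{prop::BM2}, the density of $\mathcal{H}^\infty$ has nothing to do with the non-vanishing of $\langle\pi(a_n)v,w\rangle$; it is the standard density of smooth vectors for a totally disconnected group, proved by fixing a decreasing sequence $\{K_n\}$ of compact open subgroups of $G^{0}_{\st(\sigma,\partial_\infty\mathcal{B})}$ with $\bigcap_n K_n=\{e\}$ and observing that $\pi(f_n)(w_1)=\int_{K_n}\pi(k)(w_1)\,dk$ lies in $\mathcal{H}^\infty$ and converges to $w_1$. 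Your orthogonal-complement argument, resting on the unproved assertion that the matrix coefficient of the part outside $\overline{\mathcal{H}^\infty}$ ``tends to $0$ because vectors spread out,'' is circular as written; the simple averaging argument is what is needed.

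The serious gap is in \eqref{prop::BM3}. You claim that any nonzero $w_1\in\mathcal{H}^\infty$ works because its open stabiliser $S$ meets $C_0$ in finite index and a Baire/counting argument upgrades this to finite index in $K=G^{0}_{\st(\sigma,\partial_\infty\mathcal{B})}$. This is false: $K$ is a strictly increasing union of the compact open subgroups $a_nC_0a_n^{-1}$, hence non-compact, and an open subgroup of $K$ (e.g.\ $C_0$ itself) typically has \emph{infinite} index in $K$; the images of the infinitely many conjugates $a_nC_0a_n^{-1}$ in $K/S$ need not be uniformly bounded. This is exactly where the hypothesis that $\langle\pi(a_n)v,w\rangle$ does not vanish at infinity must be used: by \eqref{prop::BM2} one finds $u_1,u_2\in\mathcal{H}^\infty$ with $\langle\pi(a_n)u_1,u_2\rangle$ not vanishing, extracts a subsequence with $\pi(a_{n_i})(u_1)$ converging \emph{weakly} to some $w_1\neq 0$, and takes $w_1$ to be \emph{that} vector. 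Its stabiliser contains the Chabauty limit $L$ of the conjugated subgroups $a_{n_i}Ka_{n_i}^{-1}$ (where $K\leq C_0$ is an open subgroup fixing $u_1$), by an explicit three-term estimate on $\langle\pi(\ell)(w_1),w_1\rangle-\langle w_1,w_1\rangle$; and $L$ has index at most $d:=[C_0:K]$ in $G^{0}_{\st(\sigma,\partial_\infty\mathcal{B})}$ by a pigeonhole argument using the exhaustion from \eqref{prop::BM1}: any $r>d$ elements eventually lie in a single $a_{n_i}C_0a_{n_i}^{-1}$, so two of them differ by an element of $a_{n_i}Ka_{n_i}^{-1}$ for infinitely many $i$, hence by an element of $L$. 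Without the weak-limit construction the finite-index conclusion simply does not follow.
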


\begin{proof}
Let us prove \ref{prop::BM1}.. First notice $C_0$ is a compact open subgroup of $G_{\st(\sigma,\partial_{\infty} \mathcal{B})}^{0}$. As every $a_n$ is a hyperbolic element in $ \Stab_{G}(\mathcal{B})$, then $a_n (\partial_{\infty} \mathcal{B})= \partial_{\infty} \mathcal{B}$ point-wise. Also, it is clear $C_0 \cup \bigcup\limits_{n \geq 1}a_n C_0 a_n^{-1} \subseteq G_{\st(\sigma,\partial_{\infty} \mathcal{B})}^{0}$. Let $g \in G_{\st(\sigma,\partial_{\infty} \mathcal{B})}^{0}$ and let $Q(\sigma,\mathcal{B},x)$ be a $\sigma$-cone in $\mathcal{B}$ point-wise fixed by $g$. As $Q(\sigma,\mathcal{B},x)$ is a convex cone and because $\{a_n(x_0)\}_{n\geq 1}$ converges to $\xi \in \sigma$, there exists $N>0$ such that $a_n(x_0) \in Q(\sigma,\mathcal{B},x)$ for every $n \geq N$. Then $Q(\sigma,\mathcal{B},a_n(x_0)) \subset Q(\sigma,\mathcal{B},x)$ for $n \geq N$, and so $g \in a_n C_0 a_n^{-1}$. 

Let us prove \ref{prop::BM2}.. Fix a decreasing sequence $\{K_n\}_{n \geq 1}$ of compact open subgroups of $G_{\st(\sigma,\partial_{\infty} \mathcal{B})}^{0}$ such that $\bigcap\limits_{n \geq 1} K_n= \{e\}$. Such sequence exists. Then the sequence $\{f_n\}_{n \geq 1}$, where $f_n:=\frac{1}{m(K_n)} \cdot \chi_{K_n}$ is an $L^{1}$-approximation of the identity $e$. Hence $\lim\limits_{n \to \infty}\pi(f_n)(w_1) = w_1$ for every $w_1 \in \mathcal{H}$. As $\pi(f_n)(w_1)= \int\limits_{K_n}\pi(k)(w_1) dk \in \mathcal{H}^{\infty}$, for every $w_1 \in \mathcal{H}$, we obtain the conclusion.

Let us prove \ref{prop::BM3}.. As by hypothesis $\langle \pi(a_n)(v), w \rangle$ does not vanish at infinity, when $n \to \infty$, by \ref{prop::BM2}. there  exist nonzero $u_1, u_2 \in \mathcal{H}^{\infty}$ with $\langle \pi(a_n)(u_1), u_2 \rangle$ not vanishing at infinity either. Then there is a subsequence $\{n_i\}_{i \geq 1}$ with $n_i \xrightarrow[i \to \infty]{} \infty $ and $w_1 \in \mathcal{H}$, $w_1 \neq 0$ such that $\{\pi(a_{n_i})(u_1)\}_{i \geq 1}$ converges weakly to $w_1$. 

Let $K \leq C_0$ be an open subgroup (thus of finite index and closed in $G_{\st(\sigma,\partial_{\infty} \mathcal{B})}^{0}$) for which $\pi(k)(u_1)=u_1$, for every $k \in K$. Then $\pi(a_{n_i})(u_1)$ is fixed by $a_{n_i}Ka_{n_i}^{-1}$. By passing to a subsequence, one may assume the sequence of closed subgroups $\{a_{n_i} K a_{n_i}^{-1}\}_{i \geq 1}$ converges in the Chabauty topology  to a closed subgroup $L \leq G_{\st(\sigma,\partial_{\infty} \mathcal{B})}^{0}$. We claim $w_1$ is $L$-invariant and $L$ is of finite index in $G_{\st(\sigma,\partial_{\infty} \mathcal{B})}^{0}$. 

\textbf{Claim 1:} $w_1$ is $L$-invariant. Indeed, let $\ell \in L$. Then there is $m_i \in a_{n_i} K a_{n_i}^{-1}$, for every $i \geq 1$, such that $\lim\limits_{i \to \infty}m_i=\ell$. We have 
\begin{equation}
\label{equ::L_inv}
\begin{split}
\langle \pi(\ell)(w_1), w_1 \rangle - \langle w_1, w_1 \rangle &=(\langle \pi(\ell)(w_1), w_1 \rangle-\langle \pi(\ell)(w_1), \pi(a_{n_i})(u_1) \rangle)\\
&\; \;+(\langle \pi(\ell)(w_1), \pi(a_{n_i})(u_1) \rangle-\langle \pi(m_i)(w_1), \pi(a_{n_i})(u_1) \rangle)\\
&\; \;+(\langle w_1, \pi(a_{n_i})(u_1) \rangle-\langle w_1, w_1 \rangle).\\
\end{split}
\end{equation}
Notice, $m_i \in a_{n_i} K a_{n_i}^{-1}$ implies $\langle \pi(m_i)(w_1), \pi(a_{n_i})(u_1) \rangle= \langle w_1, \pi(a_{n_i})(u_1) \rangle)$. Also the first and the last summand from equality (\ref{equ::L_inv}) converger to zero since $\{\pi(a_{n_i})(u_1)\}_{i \geq 1}$ converges weakly to $w_1$. The second summand also converges to zero since it is bounded by $\vert \vert \pi(\ell)(w_1) - \pi(m_i)(w_1)\vert \vert \cdot \vert \vert u_1 \vert \vert $ and $\lim\limits_{i \to \infty}m_i=\ell$. The Claim 1 follows.

 \textbf{Claim 2:} the subgroup $L$ is of finite index in $G_{\st(\sigma,\partial_{\infty} \mathcal{B})}^{0}$. Indeed, let $d \in \NN$ be the index of $K$ in $C_0$ and pick $x_1,\cdots, x_r \in G_{\st(\sigma,\partial_{\infty} \mathcal{B})}^{0}$ with $r >d$. Since $G_{\st(\sigma,\partial_{\infty} \mathcal{B})}^{0}= \bigcup\limits_{i \geq 1}a_{n_i} C_0 a_{n_i}^{-1}$, there exists $i_0$ such that $\{x_1, \cdots x_r\} \subset a_{n_i} C_0 a_{n_i}^{-1}$ for every $i \geq i_0$. Since $r >d$, for every $i \geq i_0$, there exist $j_i \neq k_i$ such that $x_{j_i}^{-1}x_{k_i} \in a_{n_i} K a_{n_i}^{-1}$. As $r < +\infty$, there exist $j\neq k$ such that $x_{j}^{-1}x_{k} \in a_{n_i} K a_{n_i}^{-1}$ for infinitely many $i \geq i_0$. This implies $x_{j}^{-1}x_{k} \in L$ and shows that the index of $L$ in $G_{\st(\sigma,\partial_{\infty} \mathcal{B})}^{0}$ is at most $d$. This proves the claim and the proposition. 
\end{proof}

\begin{lemma}(See \cite[Lemma 4.12]{Cio})
\label{lem::existance_elliptic}
Let $G$ be a closed, strongly transitive and type-preserving subgroup of $\Aut(\Delta)$, and $\sigma \subset \partial_{\infty} \Delta$ an ideal simplex.  Let $\mathcal{A}, \mathcal{B}$ be two apartments in $\Delta$ such that $\mathcal{A} \cap \mathcal{B}$ is a half-apartment containing $\St(\sigma, \partial_{\infty} \mathcal{A})$ in its boundary at infinity. Then:
\begin{enumerate}
\item
\label{lem::existance_elliptic1}
There is $g \in G_{\st(\sigma,\partial_{\infty} \mathcal{A})}^{0}$ mapping $\mathcal{B}$ to $\mathcal{A}$ and fixing point-wise $\mathcal{A} \cap \mathcal{B}$
\item
\label{lem::existance_elliptic2}
For every wall $M$ of $\mathcal{A}$ there is $g \in G$ that stabilizes $\mathcal{A}$ and acts on $\mathcal{A}$ as the reflection through the wall $M$.
\item
\label{lem::existance_elliptic3}
For every wall $M$ of $\mathcal{A}$ there is a hyperbolic element $g \in \Stab_G(\mathcal{A})$ whose translation axis is perpendicular to $M$.
\end{enumerate}
\end{lemma}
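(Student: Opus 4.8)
The plan is to reduce all three assertions to the strong transitivity of $G$ together with the combinatorial geometry of apartments sharing a half-apartment, using the group $G_{\st(\sigma,\partial_{\infty}\mathcal{A})}^{0}$ as the natural place to find the required fixers. For part \ref{lem::existance_elliptic1}, since $\mathcal{A}\cap\mathcal{B}$ is a half-apartment whose boundary at infinity contains $\St(\sigma,\partial_{\infty}\mathcal{A})$, I would first use strong transitivity to produce some $g'\in G$ with $g'(\mathcal{B})=\mathcal{A}$ and $g'$ fixing a chamber of $\mathcal{A}\cap\mathcal{B}$; by the standard building fact that two apartments sharing a half-apartment are mapped to one another by an automorphism fixing that half-apartment pointwise, one can arrange $g'$ to fix $\mathcal{A}\cap\mathcal{B}$ pointwise (alternatively, after post-composing with an element of $\Fix_{G}(\mathcal{A}\cap\mathcal{B})$, which is nontrivial by strong transitivity). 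The point then is that fixing the half-apartment $\mathcal{A}\cap\mathcal{B}$ pointwise forces $g'$ to fix a $\sigma$-cone in $\mathcal{A}$: indeed $\mathcal{A}\cap\mathcal{B}$ contains, up to choosing its base, the Weyl sectors $Q(c,\mathcal{A},x)$ for every $c\in\st(\sigma,\partial_{\infty}\mathcal{A})$ (this is exactly the observation used at the end of the proof of Lemma \ref{lem::contr_group_alpha}, that the convex hull of a star and $\sigma$ contains a $\sigma$-cone), so $g'\in G_{\st(\sigma,\partial_{\infty}\mathcal{A})}^{0}$, and we may take $g=g'$.

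For part \ref{lem::existance_elliptic2}, I would argue as follows. Let $M$ be a wall of $\mathcal{A}$, bounding the two roots $\alpha,-\alpha$. Pick a chamber $c\subset\mathcal{A}$ having a panel $P$ on $M$, and let $c'$ be the chamber of $\mathcal{A}$ on the other side of $P$; by thickness there is a third chamber $c''\supset P$ not in $\mathcal{A}$. Using strong transitivity, take an apartment $\mathcal{B}$ containing $\alpha\cup c''$ (so $\mathcal{A}\cap\mathcal{B}\supseteq\alpha$, a half-apartment) and an element $h\in G$ sending the pair $(\mathcal{B},c'')$ to $(\mathcal{A},c')$ while fixing $\alpha$ pointwise — again available because $G$ is strongly transitive and two apartments sharing a half-apartment are interchanged by a pointwise fixer of it. Then $h$ stabilizes $\mathcal{A}$, fixes $\alpha$ pointwise, and does not fix $c'$ — hence it swaps the two half-apartments across $M$, so by rigidity within the apartment $\mathcal{A}$ (a type-preserving automorphism of $\mathcal{A}$ fixing a root pointwise and not the identity is the reflection through its wall) $h$ acts on $\mathcal{A}$ as the reflection through $M$. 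This is the reflection we want.

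For part \ref{lem::existance_elliptic3}, I would take the reflection $s_M$ from part \ref{lem::existance_elliptic2} and a second reflection $s_{M'}$ through a wall $M'$ of $\mathcal{A}$ parallel to $M$ (so $M'=s_M(M)$ translated by one step, or any wall of $\mathcal{A}$ with the same direction as $M$ but distinct from it; such a wall exists in an affine apartment of dimension $\geq 1$, and in fact one uses that all walls in a given parallelism class give translations in a common direction). Both lie in $\Stab_G(\mathcal{A})$ by part \ref{lem::existance_elliptic2}, so their product $g:=s_{M'}s_M$ lies in $\Stab_G(\mathcal{A})$ and acts on $\mathcal{A}$ as the composition of two reflections through parallel hyperplanes, i.e. as a nontrivial translation in the direction perpendicular to $M$; thus $g$ is hyperbolic with translation axis a line perpendicular to $M$. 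Alternatively, invoke the translation automorphisms of $\Stab_{H}(\Sigma)$ already used repeatedly (e.g. in the proof of Lemma \ref{lem::cocompact1}) which realize, for any two special vertices of the same type, the corresponding translation of $\mathcal{A}$; choosing the two vertices along a line perpendicular to $M$ gives the desired hyperbolic element directly.

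The main obstacle, and the only place requiring genuine care, is the passage from "$g'$ maps $\mathcal{B}$ to $\mathcal{A}$ and fixes a chamber of $\mathcal{A}\cap\mathcal{B}$" to "$g'$ fixes $\mathcal{A}\cap\mathcal{B}$ pointwise" in parts \ref{lem::existance_elliptic1} and \ref{lem::existance_elliptic2}: strong transitivity as stated only gives transitivity on (apartment, chamber) pairs, so one must either invoke the standard structural fact that the pointwise stabilizer of a half-apartment acts transitively on the apartments containing it (this is essentially the content of strong transitivity rephrased via retractions, and is exactly how $\mathcal{A}\cap\mathcal{B}$ being a half-apartment is used), or carry out a short gallery-by-gallery argument inside $\mathcal{A}\cap\mathcal{B}$. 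Everything else is routine building combinatorics plus the affine apartment structure, and since the claim is quoted from \cite[Lemma 4.12]{Cio} the detailed verification can be kept brief.
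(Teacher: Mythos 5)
Your parts \ref{lem::existance_elliptic1} and \ref{lem::existance_elliptic3} follow the paper's own route: for \ref{lem::existance_elliptic1} the paper likewise picks a chamber $c$ in the interior of $\mathcal{A}\cap\mathcal{B}$, uses strong transitivity to get $g$ with $g(\mathcal{B})=\mathcal{A}$ and $g(c)=c$, deduces from type-preserving rigidity that $g$ fixes $\mathcal{A}\cap\mathcal{B}$ pointwise, and observes that a half-apartment whose ideal boundary contains $\St(\sigma,\partial_{\infty}\mathcal{A})$ contains a $\sigma$-cone; for \ref{lem::existance_elliptic3} the paper also takes the product of the reflections through two parallel walls. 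Those parts are fine.

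Part \ref{lem::existance_elliptic2}, however, has a genuine error. Your element $h$ is required to fix the root $\alpha$ pointwise, and yet you conclude that it ``swaps the two half-apartments across $M$''; these are incompatible, since fixing $\alpha$ pointwise forces $h(\alpha)=\alpha$. The rigidity statement you invoke is also backwards: a type-preserving automorphism stabilizing $\mathcal{A}$ and fixing a root of $\mathcal{A}$ pointwise fixes a chamber of $\mathcal{A}$, hence is the \emph{identity} on $\mathcal{A}$; it can never be the reflection $s_M$, which fixes only the wall $M$ pointwise. (A secondary problem: your $h$ satisfies $h(\mathcal{B})=\mathcal{A}$, but nothing in the construction forces $h(\mathcal{A})=\mathcal{A}$.) This is exactly why the paper does not use a single root-fixing element. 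It instead takes, by thickness, a half-apartment $H''$ outside $\mathcal{A}$ with boundary wall $M$, so that $H\cup H''$ and $H'\cup H''$ are apartments, and composes three elements $u,v,w$ furnished by part \ref{lem::existance_elliptic1}, each fixing one of the two roots $H$, $H'$ of $\mathcal{A}$ pointwise while shuffling the three half-apartments $H,H',H''$; the product $g=vuw$ then stabilizes $\mathcal{A}$, fixes $M$ pointwise, and interchanges $H$ and $H'$, i.e.\ acts as the reflection. You need such a composite ``round trip'' through an apartment leaving $\mathcal{A}$; as written, the element you produce is trivial on $\mathcal{A}$ whenever it stabilizes $\mathcal{A}$ at all, and part \ref{lem::existance_elliptic3} collapses with it.
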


\begin{proof}
Part \ref{lem::existance_elliptic1}. of the lemma follows by the strong transitivity of $G$ on $\Delta$, and by choosing a chamber $c$ in the interior of the  half-apartment $\mathcal{A} \cap \mathcal{B}$. Then there is $g \in G$ such that $g(\mathcal{B})=\mathcal{A}$ and $g(c)=c$. In particular, $g$ fixes point-wise the half-apartment $\mathcal{A} \cap \mathcal{B}$, and as well its ideal boundary containing $\St(\sigma, \partial_{\infty} \mathcal{A})$. This concludes $g \in G_{\st(\sigma,\partial_{\infty} \mathcal{A})}^{0}$.

To prove part \ref{lem::existance_elliptic2}., let $M$ be a wall of $\mathcal{A}$. Let $H$ and $H'$ be the two half-apartments of $\mathcal{A}$ determined by $M$. Since $\Delta$ is thick, there is a half-apartment $H''$ such that $H \cup H''$ and $H' \cup H''$ are both apartments of $\Delta$. By the first part of the lemma there are $u,v,w \in G$ such that $u$ fixes $H$ point-wise and maps $H'$ to $H''$, the elements $v,w$ both fix $H'$ point-wise and  $v(H'')=H, w(H)=u^{-1}(H')$. Set $g=vuw$ and by construction $g$ fixes point-wise the wall $M$ and $g(H)=H', g(H')=H$. 

To obtain the last part of the lemma one just needs to apply \ref{lem::existance_elliptic2}. for example, for two consecutive parallel walls of $\mathcal{A}$. The conclusion follows.
\end{proof}

We also record the following lemma. Still, this is not used further.
\begin{lemma}
\label{lem::iden_elements}
Let $G$ be a closed, strongly transitive and type-preserving subgroup of $\Aut(\Delta)$. Let $\mathcal{A}$ be an apartment of $\Delta$, $\sigma$ a simplex in $\partial_{\infty} \mathcal{A}$,  and $M$ a wall in $\mathcal{A}$.  Suppose $\St(\sigma, \partial_{\infty} \mathcal{A}) \subset \partial_{\infty} H$, where $H$ is one of the half-apartments of $\mathcal{A}$ associated with $M$. Let $r \in \Stab_{G}(\mathcal{A})$ be a reflection on $\mathcal{A}$ through the wall $M$. 

Then for every hyperbolic element $g \in  \Stab_{G}(\mathcal{A})$ of even translation length, and having its translation axis perpendicular to $M$ and its repelling end-point in $\partial_{\infty} H$,  the double coset $G_{\st(\sigma,\partial_{\infty} \mathcal{A})}^{0}rg G_{\st(\sigma,\partial_{\infty} \mathcal{A})}^{0}$ contains elliptic elements that fix point-wise a half-apartment of $\mathcal{A}$ associated with an $M$-parallel wall between $M$ and $g^{-1}(M)$, and not containing $\sigma$ in its ideal boundary.
\end{lemma}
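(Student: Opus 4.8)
The strategy is to work entirely inside the apartment $\mathcal{A}$ and to interpret the double coset $G_{\st(\sigma,\partial_{\infty}\mathcal{A})}^{0} \, rg \, G_{\st(\sigma,\partial_{\infty}\mathcal{A})}^{0}$ as the set of all elements obtained from $rg$ by pre- and post-composing with elements that fix a $\sigma$-cone in $\mathcal{A}$. First I would set up coordinates: let $\ell$ be the translation axis of $g$, perpendicular to $M$, with repelling endpoint $\eta_- \in \partial_\infty H$ and attracting endpoint $\eta_+ \in \partial_\infty H'$, where $H'$ is the half-apartment opposite $H$. Since $\sigma \subset \partial_\infty H$ and $g$ has even translation length, the walls $M, g^{-1}(M), g^{-2}(M),\dots$ are all parallel to $M$ and drift toward $\eta_-$; the wall $g^{-1}(M)$ lies strictly on the $\eta_-$-side of $M$ (because $g^{-1}$ translates toward $\eta_-$), so there is at least one $M$-parallel wall $M'$ strictly between $M$ and $g^{-1}(M)$. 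Let $H_{M'}$ be the half-apartment of $\mathcal{A}$ bounded by $M'$ that does \emph{not} contain $\sigma$ in its ideal boundary — i.e. the half-apartment on the $\eta_-$-side.

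The key step is to produce, using Lemma~\ref{lem::existance_elliptic}, an elliptic element $h$ fixing $H_{M'}$ pointwise that, when combined appropriately with $rg$, gives an element of the double coset fixing $H_{M'}$ pointwise. Concretely: $rg$ maps $\mathcal{A}$ to itself and acts as the composition of the reflection through $M$ with the translation along $\ell$ by $rg$'s displacement; one computes that $rg$ is itself elliptic on $\mathcal{A}$ (a reflection composed with a translation perpendicular to its wall is again a reflection, through a wall halfway in between), fixing pointwise some $M$-parallel wall $M_0$ lying between $M$ and $g^{-1}(M)$. If $M_0$ already works (i.e. $H_{M_0}$ does not contain $\sigma$ in its ideal boundary, which it does not, since $M_0$ is strictly on the $\eta_-$-side of $M$), then $rg$ itself lies in the double coset and we are essentially done — but I must also arrange that $rg$ fixes pointwise a whole half-apartment and not merely the wall $M_0$. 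That is where the flanking elements of $G_{\st(\sigma,\partial_{\infty}\mathcal{A})}^{0}$ come in: by Lemma~\ref{lem::existance_elliptic}(1), applied to $\mathcal{A}$ and a suitable second apartment $\mathcal{B}$ with $\mathcal{A}\cap\mathcal{B} = H$ (the half-apartment containing $\sigma$ in its ideal boundary), there are plenty of elements of $G_{\st(\sigma,\partial_{\infty}\mathcal{A})}^{0}$ fixing $H$ pointwise and moving the rest of $\mathcal{A}$; post-composing $rg$ with such an element "straightens" the image half-apartment $rg(H_{M_0})$ back into $\mathcal{A}$ while keeping everything on the $\eta_-$-side fixed, yielding an elliptic element of the double coset that fixes pointwise the half-apartment $H_{M_0}$ (or a slightly adjusted parallel one) bounded by a wall between $M$ and $g^{-1}(M)$, not containing $\sigma$ in its ideal boundary.

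In more detail, the argument runs: (i) Observe $rg$ stabilises $\mathcal{A}$ and is elliptic, fixing pointwise a wall $M_0$ strictly between $M$ and $g^{-1}(M)$ — this is the elementary Euclidean-geometry computation with reflections and translations in $\mathcal{A} \cong \mathbb{R}^{n}$. (ii) The two half-apartments of $\mathcal{A}$ bounded by $M_0$ are swapped by $rg$, so $rg$ does not by itself fix a half-apartment; but $rg$ maps the $\eta_-$-side half-apartment $H_{M_0}$ isometrically onto the $\eta_+$-side one. (iii) Use Lemma~\ref{lem::existance_elliptic}(1) to find $u \in G_{\st(\sigma,\partial_{\infty}\mathcal{A})}^{0}$ fixing pointwise the half-apartment $H$ (hence fixing $\st(\sigma,\partial_\infty\mathcal{A})$ in its ideal boundary, as $\sigma\subset\partial_\infty H$) and mapping the apartment $rg(\mathcal{A})=\mathcal{A}$ ... — here one instead fixes the picture by conjugating: the element $u\,(rg)$ or $(rg)\,u$ is then elliptic, lies in the double coset, and fixes pointwise the half-apartment $H_{M_0}$, which is bounded by the $M$-parallel wall $M_0$ between $M$ and $g^{-1}(M)$ and does not contain $\sigma$ in its ideal boundary. (iv) Finally note $u(rg)$ (or $(rg)u$) is elliptic because it fixes a point of $\mathcal{A}$.

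\textbf{Main obstacle.} The delicate point is step (iii): making sure that the flanking element $u$ from $G_{\st(\sigma,\partial_{\infty}\mathcal{A})}^{0}$ can be chosen to fix pointwise \emph{everything on the $\eta_-$-side of some wall between $M$ and $g^{-1}(M)$} while simultaneously correcting the half-apartment-swapping behaviour of $rg$ — i.e. that the $\sigma$-cone fixed by $u$ can be taken large enough on the right side yet $u$ still does the required straightening. This requires carefully tracking which half-apartments the $\sigma$-cones of $u$ must contain, and exploiting that $Q(\sigma,\mathcal{A},x)$ can be based at points arbitrarily far toward $\eta_+$ (so its complement, where $u$ is free to move things, reaches arbitrarily far toward $\eta_-$). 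I expect the parity hypothesis on the translation length of $g$ to be exactly what guarantees $rg$ fixes a \emph{wall} of $\mathcal{A}$ (rather than a parallel hyperplane through the middle of chambers, which would not be a wall of the building), so that all the half-apartments invoked are genuine half-apartments of $\mathcal{A}$. Verifying these combinatorial compatibilities in the affine Coxeter complex is the routine-but-fiddly core of the proof; everything else is a direct application of Lemma~\ref{lem::existance_elliptic} and the definition of $G_{\st(\sigma,\partial_{\infty}\mathcal{A})}^{0}$.
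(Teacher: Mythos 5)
Your setup is right---in particular you correctly identify that $rg$ acts on $\mathcal{A}$ as the reflection through the $M$-parallel wall $M'$ lying halfway between $M$ and $g^{-1}(M)$, which is also where the paper's proof starts, and the evenness of the translation length is indeed what makes $M'$ a genuine wall---but there are two genuine problems. First, a sign error: since the repelling endpoint of $g$ lies in $\partial_{\infty}H$ and $\St(\sigma,\partial_{\infty}\mathcal{A})\subset\partial_{\infty}H$, the half-apartment bounded by $M'$ that does \emph{not} contain $\sigma$ in its ideal boundary is the one on the \emph{attracting} ($\eta_{+}$) side, not the $\eta_{-}$-side as you claim; this confusion propagates through your steps (ii)--(iii).

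Second, and more seriously, the mechanism in your step (iii) does not work, and it is exactly the missing idea. You try to repair the half-apartment swap performed by $rg$ with a \emph{single} flanking element $u\in G_{\st(\sigma,\partial_{\infty}\mathcal{A})}^{0}$, forming $u(rg)$ or $(rg)u$. But $rg$ preserves $\mathcal{A}$ and interchanges the two half-apartments of $\mathcal{A}$ bounded by $M'$, while any $u\in G_{\st(\sigma,\partial_{\infty}\mathcal{A})}^{0}$ fixes pointwise a $\sigma$-cone, which reaches arbitrarily deep into the $\sigma$-side of $M'$; such a $u$ therefore cannot carry one side of $M'$ in $\mathcal{A}$ onto the other, so no single pre- or post-composition can make the product restrict to the identity on a half-apartment of $\mathcal{A}$. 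The paper instead sandwiches $rg$ between \emph{two} elements, producing $h'\,rg\,h$ with $h,h'\in G_{\st(\sigma,\partial_{\infty}\mathcal{A})}^{0}$: the element $h$ first folds the target half-apartment $\mathcal{A}-H'$ \emph{out of} $\mathcal{A}$, into another apartment meeting $\mathcal{A}$ only in $M'$, so that $g$ and then $r$ move this external half-apartment around the building rather than swapping the two sides of $\mathcal{A}$, and $rg(M')=M'$ returns its bounding wall to $M'$; then $h'$ folds it back onto $\mathcal{A}-H'$. The composite fixes $M'$ pointwise and maps $\mathcal{A}-H'$ to itself, hence is the identity on it. Without this fold-out/act/fold-back step your argument has a genuine gap, which you in fact flag as the main obstacle but do not resolve.
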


\begin{proof}
Let $g$ be a hyperbolic element in  $\Stab_{G}(\mathcal{A})$ of even translation length, and having its translation axis perpendicular to $M$ and its repelling end-point in $\partial_{\infty} H$. 

As $g$ has even translation length, let $M'$ be the $M$-parallel wall in $\mathcal{A}$ at the half distance between $M$ and $g^{-1}(M)$. Notice the wall $M'$ is in the half-apartment $H$.  Denote by $H'$ the half-apartment of $\mathcal{A} - M'$ having $\St(\sigma, \partial_{\infty} \mathcal{A})$ in its ideal boundary.  By Lemma \ref{lem::existance_elliptic} there is $h \in G_{\st(\sigma,\partial_{\infty} \mathcal{A})}^{0}$ fixing $H'$ point-wise and moving away the half apartment $\mathcal{A} - H'$. Then apply $g$ and so the half-apartment  $gh(\mathcal{A} - H')$ intersects $\mathcal{A}$ only in the wall $g(M')$.  Apply the reflection $r$, and by the above construction $rg(M')=M'$. Apply again an element $h' \in G_{\st(\sigma,\partial_{\infty} \mathcal{A})}^{0}$ that fixes $H'$ point-wise and moves the half-apartment $rgh(\mathcal{A} - H')$ back to $\mathcal{A} - H'$. We have $h'rgh(\mathcal{A} - H')=\mathcal{A} - H'$ point-wise. Notice $h'rgh(H') \neq H'$, and we are done. 
\end{proof}

\begin{lemma}
\label{lem::sequence_elliptic_elements}
Let $G$ be a closed, strongly transitive and type-preserving subgroup of $\Aut(\Delta)$. Let $\mathcal{A}$ be an apartment of $\Delta$ and $\sigma$ a simplex in $\partial_{\infty} \mathcal{A}$. Then there exist a sequence of  half-apartments $\{H_n\}_{n \geq 1}$ of $\mathcal{A}$ and elements $\{k_n\}_{n \geq 1} \subset G_{x_0}$ such that:
\begin{enumerate}
\item
$H_n \subsetneq H_{n+1}$, and $x_0 \in H_n$, for every $n \geq 1$
\item
$\St(\sigma, \partial_{\infty} \mathcal{A}) \subset \partial_{\infty} (\mathcal{A} - H_1)$
\item
$\lim\limits_{n \to \infty}k_n=e$, and for every $n \geq 1$ one has $k_n(H_n)=H_n$ and $k_n(\mathcal{A} - H_n)\cap \mathcal{A}$ is a wall.
\end{enumerate}
\end{lemma}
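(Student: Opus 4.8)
The plan is to build the sequence $\{H_n\}$ by taking nested half-apartments of $\mathcal{A}$ whose boundary walls $M_n := \partial H_n$ are mutually parallel, march off towards infinity in a direction ``opposite'' to $\st(\sigma,\partial_\infty\mathcal{A})$, and then for each $n$ produce an elliptic element $k_n \in G_{x_0}$ that fixes $H_n$ pointwise while pushing the complementary half-apartment off of $\mathcal{A}$, all the way down to a single wall. The construction of $k_n$ will be an application of Lemma~\ref{lem::existance_elliptic}(\ref{lem::existance_elliptic1}): indeed, by thickness of $\Delta$ one can find an apartment $\mathcal{B}_n$ with $\mathcal{A} \cap \mathcal{B}_n = H_n$, and then Lemma~\ref{lem::existance_elliptic}(\ref{lem::existance_elliptic1}) (applied with the roles of $\mathcal{A}, \mathcal{B}$ and the simplex played appropriately, noting $\st(\sigma,\partial_\infty\mathcal{A}) \subset \partial_\infty H_n$ once we arrange condition (2)) yields $g_n \in G$ fixing $H_n$ pointwise and mapping $\mathcal{B}_n$ to $\mathcal{A}$; composing $g_n$ with an element that realizes a suitable ``folding'' back into $\mathcal{A}$ along the wall $M_n$ gives an element $k_n$ which fixes $H_n$ pointwise and sends $\mathcal{A} - H_n$ to a half-apartment meeting $\mathcal{A}$ exactly in $M_n$. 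Since $k_n$ fixes $H_n$ pointwise and $x_0 \in H_n$, automatically $k_n \in G_{x_0}$.

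First I would set up the combinatorial picture: fix the special vertex $x_0 \in \mathcal{A}$, choose a wall $M_1$ of $\mathcal{A}$ with $x_0$ on the side $H_1$ and with $\st(\sigma,\partial_\infty\mathcal{A}) \subset \partial_\infty(\mathcal{A} - H_1)$ — such a wall exists because $\st(\sigma,\partial_\infty\mathcal{A})$ is a finite set of ideal chambers all containing $\sigma$, hence lies in the boundary of a common half-apartment, and one can slide a parallel wall past $x_0$ to the other side. Then let $M_n$ be the family of pairwise-parallel walls obtained by translating $M_1$ step by step away from $\st(\sigma,\partial_\infty\mathcal{A})$ (i.e.\ in the direction that enlarges the side containing $x_0$), and set $H_n$ to be the half-apartment of $\mathcal{A} - M_n$ containing $x_0$; this gives $H_n \subsetneq H_{n+1}$, $x_0 \in H_n$, and $\st(\sigma,\partial_\infty\mathcal{A}) \subset \partial_\infty(\mathcal{A} - H_1) \subset \partial_\infty(\mathcal{A} - H_n)$ for all $n$, which are conditions (1) and (2). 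Note that the $M_n$ being parallel means $\st(\sigma,\partial_\infty\mathcal{A})$ sits in the ideal boundary of every $\mathcal{A} - H_n$, which is what lets Lemma~\ref{lem::existance_elliptic} apply uniformly.

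Next, for each fixed $n$, I would produce $k_n$. Using thickness of $\Delta$, pick a half-apartment $H_n''$ with $H_n \cup H_n''$ an apartment of $\Delta$ and $H_n'' \neq \mathcal{A} - H_n$; by Lemma~\ref{lem::existance_elliptic}(\ref{lem::existance_elliptic1}) (with $\mathcal{A} \cap (H_n \cup H_n'') = H_n$ a half-apartment whose ideal boundary contains $\st(\sigma,\partial_\infty\mathcal{A})$) there is $g_n \in G_{\st(\sigma,\partial_\infty\mathcal{A})}^{0} \le G$ with $g_n(H_n \cup H_n'') = \mathcal{A}$ and $g_n$ fixing $H_n$ pointwise, so $g_n(H_n'') = \mathcal{A} - H_n$; running the argument symmetrically from the other side (or composing with the reflection-type element of Lemma~\ref{lem::existance_elliptic}(\ref{lem::existance_elliptic2}) through $M_n$) one assembles, exactly as in the proof of Lemma~\ref{lem::existance_elliptic}(\ref{lem::existance_elliptic2}), an element $k_n \in G$ fixing $H_n$ pointwise with $k_n(\mathcal{A} - H_n)$ meeting $\mathcal{A}$ only in $M_n$. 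Since $k_n$ fixes $H_n$ pointwise and $x_0 \in H_n$, we get $k_n \in G_{x_0}$, giving the first half of (3).

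The last and most delicate point is $\lim_{n\to\infty} k_n = e$, which is where the main obstacle lies: we need the elements $k_n$ to be chosen not merely to exist but to converge to the identity. The key observation is that $k_n$ fixes $H_n$ pointwise, and $\{H_n\}$ exhausts $\mathcal{A}$; since $x_0 \in H_n$ and every ball around $x_0$ in $\Delta$ is eventually contained in the fixed set of $k_n$ (because $H_n$ pointwise fixed forces, via convexity and the building structure, larger and larger balls around $x_0$ to be fixed — any chamber at bounded distance from $x_0$ eventually has all its vertices inside $H_n$ once $n$ is large, by local finiteness), the elements $k_n$ fix larger and larger balls around $x_0$ pointwise. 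In the permutation topology on $\Aut(\Delta)$ (equivalently the topology of uniform convergence on compacta, which is the topology making $\Aut(\Delta)$ a totally disconnected locally compact group), fixing larger and larger balls around a fixed vertex is precisely what it means to converge to the identity; hence $k_n \to e$. The care needed is to confirm that the constructed $k_n$ really does fix a ball of radius tending to infinity — this follows because $k_n$ fixes $H_n$ pointwise and $\text{dist}(x_0, M_n) \to \infty$ by construction of the parallel family $\{M_n\}$, so the fixed set of $k_n$ contains the ball of radius $\text{dist}(x_0, M_n)$ about $x_0$. This completes all three clauses.
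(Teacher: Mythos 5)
Your construction of the nested half-apartments and of the folding elements is the right strategy and matches the intended use of Lemma~\ref{lem::existance_elliptic} (the paper itself only gives a one-line indication for this proof). Two small imprecisions in that part: to get $k_n(\mathcal{A}-H_n)\cap\mathcal{A}$ equal to the wall $M_n$ \emph{exactly}, you must choose $H_n''$ so that both $H_n\cup H_n''$ and $(\mathcal{A}-H_n)\cup H_n''$ are apartments, as in the proof of Lemma~\ref{lem::existance_elliptic}(\ref{lem::existance_elliptic2}); requiring only that $H_n\cup H_n''$ be an apartment with $H_n''\neq\mathcal{A}-H_n$ still allows $H_n''\cap\mathcal{A}$ to contain chambers (already visible in a tree). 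Also, you describe $H_n$ as a half-apartment whose ideal boundary contains $\st(\sigma,\partial_{\infty}\mathcal{A})$, which contradicts your own arrangement in condition (2); but since the conclusion you actually use (an element fixing $H_n$ pointwise and mapping $\mathcal{B}_n$ to $\mathcal{A}$) needs only strong transitivity and not membership in $G^{0}_{\st(\sigma,\partial_{\infty}\mathcal{A})}$, both points are repairable by following the cited proof verbatim.

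The genuine gap is in your argument that $\lim_{n\to\infty}k_n=e$. You assert that because $k_n$ fixes $H_n$ pointwise and $\dist(x_0,M_n)\to\infty$, the fixed-point set of $k_n$ contains the ball of radius $\dist(x_0,M_n)$ about $x_0$ \emph{in $\Delta$}; the justification that ``any chamber at bounded distance from $x_0$ eventually has all its vertices inside $H_n$'' is false. The set $H_n$ is a half-apartment of the single apartment $\mathcal{A}$, whereas in a thick building almost all chambers at distance $1$ from $x_0$ lie outside $\mathcal{A}$, and nothing in your construction prevents $k_n$ from moving them. What your argument really gives is that every accumulation point $k$ of $\{k_n\}$ in the compact group $G_{x_0}$ fixes $\bigcup_{n}H_n=\mathcal{A}$ pointwise; but $\Fix_G(\mathcal{A})$ is in general a nontrivial compact group (for instance, for $\mathrm{PGL}_2(\QQ_p)$ acting on its tree the pointwise fixator of a bi-infinite geodesic is infinite), so $k=e$ does not follow. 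The gap can be closed by renormalising: extract a subsequence with $k_{n_j}\to k\in\Fix_G(\mathcal{A})\leq G_{x_0}$ and set $\tilde{k}_j:=k^{-1}k_{n_j}$. Since $k^{-1}$ fixes $\mathcal{A}$ pointwise, $\tilde{k}_j$ still fixes $H_{n_j}$ pointwise, lies in $G_{x_0}$, and satisfies $\tilde{k}_j(\mathcal{A}-H_{n_j})\cap\mathcal{A}=k^{-1}\bigl(k_{n_j}(\mathcal{A}-H_{n_j})\cap\mathcal{A}\bigr)=M_{n_j}$, while now $\tilde{k}_j\to e$; relabelling $(H_{n_j},\tilde{k}_j)$ proves the lemma. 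As written, however, your convergence step does not go through.
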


\begin{proof}
One uses Lemma \ref{lem::existance_elliptic} and the fact that, when $\Delta$ is irreducible, by Theorems \ref{main_thm2} and \ref{thm::main3}, and Proposition \ref{prop::normal_U_pm}, $G$ has many and enough elements.  
\end{proof}

\begin{lemma}
\label{lem::hyper_elements}
Let $G$ be a closed, strongly transitive and type-preserving subgroup of $\Aut(\Delta)$. Let $\mathcal{A}$ be an apartment of $\Delta$, $\sigma$ a simplex in $\partial_{\infty} \mathcal{A}$ and $M$ a wall in $\mathcal{A}$.  Suppose $\St(\sigma, \partial_{\infty} \mathcal{A}) \subset \partial_{\infty} H$, where $H$ is one of the two half-apartments of $\mathcal{A}$ associated with $M$. Let $k \in G$ such that $k(\mathcal{A}- H)=\mathcal{A}- H$ point-wise and $k(H) \cap \mathcal{A}= M$. Then for every $\ell \in \NN^{*}$ there exists $g \in G_{\st(\sigma,\partial_{\infty} \mathcal{A})}^{0}k G_{\st(\sigma,\partial_{\infty} \mathcal{A})}^{0}$ that is hyperbolic, of translation axis perpendicular to $M$, containing $\mathcal{A} - H$ in its translation apartment, of attracting end-point in $\partial_{\infty} (\mathcal{A} - H)$, and of translation length $2\ell$.
\end{lemma}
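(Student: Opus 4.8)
The plan is to produce $g$ as a product $g = h_{1}\,k\,h_{2}$ with $h_{1},h_{2}\in G_{\st(\sigma,\partial_{\infty}\mathcal{A})}^{0}$, reducing both the construction and the verification to the classical rank-one computation transverse to $M$. First I would reinterpret the hypotheses on $k$: since $k$ fixes the half-apartment $\mathcal{A}-H$ point-wise and carries the opposite half-apartment $H$ to a half-apartment meeting $\mathcal{A}$ only along $M$, the element $k$ is a non-trivial element of the root group $U_{\mathcal{A}-H}(G):=\{u\in G\mid u|_{\mathcal{A}-H}=\id\}$ of the affine half-apartment $\mathcal{A}-H$ of $\Delta$, and $k(\mathcal{A})=(\mathcal{A}-H)\cup k(H)$ meets $\mathcal{A}$ exactly in $\mathcal{A}-H$. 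The structural point I need is that $G_{\st(\sigma,\partial_{\infty}\mathcal{A})}^{0}$ is much larger than a single root group: it contains $U_{K}(G):=\{u\in G\mid u|_{K}=\id\}$ for every half-apartment $K$ of $\mathcal{A}$ whose boundary wall is parallel to $M$ and which lies on the side of $M$ carrying $\St(\sigma,\partial_{\infty}\mathcal{A})$ at infinity, because such a $K$ contains a $\sigma$-cone, so that fixing $K$ point-wise forces fixing a $\sigma$-cone point-wise. Writing $M=M^{(0)},M^{(\pm1)},M^{(\pm2)},\dots$ for the walls of $\mathcal{A}$ parallel to $M$ and $K^{(j)}$ for the half-apartment bounded by $M^{(j)}$ on the $\St(\sigma,\partial_{\infty}\mathcal{A})$-side, this yields $U_{K^{(j)}}(G)\subseteq G_{\st(\sigma,\partial_{\infty}\mathcal{A})}^{0}$ for every $j\in\ZZ$, and by strong transitivity each $U_{K^{(j)}}(G)$ is transitive on the apartments of $\Delta$ containing $K^{(j)}$.

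Next I would pin down the target and reduce to rank one. The line through $M$ perpendicular to $M$ inside $\mathcal{A}$ has one ideal endpoint $\xi_{+}$ on the $H$-side and the opposite endpoint $\xi_{-}\in\partial_{\infty}(\mathcal{A}-H)$; it suffices to find $g$ hyperbolic with attracting fixed point exactly $\xi_{-}$ and axis meeting $M$ perpendicularly, of translation length $2\ell$, for then the $\xi_{-}$-ward ray of the axis runs inside $\mathcal{A}-H$ and a translation apartment of $g$ contains $\mathcal{A}-H$. (Like $k(\mathcal{A})$, this translation apartment will in general agree with $\mathcal{A}$ only along $\mathcal{A}-H$; $g$ will not stabilise $\mathcal{A}$.) To build such a $g$ inside the double coset $G_{\st(\sigma,\partial_{\infty}\mathcal{A})}^{0}\,k\,G_{\st(\sigma,\partial_{\infty}\mathcal{A})}^{0}$ I would pass to the rank-one structure transverse to $M$: using strong transitivity and thickness (as in the proof of Lemma \ref{lem::sequence_elliptic_elements}, where the abundance of elements furnished by Theorems \ref{main_thm2} and \ref{thm::main3} is invoked), the subgroup generated by $U_{\mathcal{A}-H}(G)$, the root groups $U_{K^{(j)}}(G)$ and $\Fix_{G}(\mathcal{A})$ acts transverse to $M$ as a closed rank-one group over a non-Archimedean local field on its Bruhat--Tits tree, with the perpendicular line becoming an apartment, $U_{\mathcal{A}-H}(G)$ surjecting onto one maximal unipotent, $\bigcup_{j}U_{K^{(j)}}(G)$ onto the opposite maximal unipotent $U$, and $k$ mapping to a non-trivial element.

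Then I would carry out the usual $\SL_{2}$-on-a-tree computation in this rank-one group. Write $u$ for the image of $k$, a non-trivial element of one maximal unipotent whose parameter is a unit, and parametrise the opposite unipotent $U$ by the local field. For $u_{1},u_{2}\in U$ one solves the single linear equation making $u_{1}uu_{2}$ triangular for the Borel splitting; this forces the attracting fixed point of $u_{1}uu_{2}$ to be $\xi_{-}$. With $u_{2}$ thereby determined from $u_{1}$, the relevant diagonal entry of $u_{1}uu_{2}$ has the form $1+(\text{unit})\cdot(\text{parameter of }u_{1})$, and one chooses the parameter of $u_{1}$ so that this entry has valuation $\ell$; then $u_{1}uu_{2}$ is hyperbolic, fixes $\xi_{-}$, translates towards $\xi_{-}$ by exactly $2\ell$, and has axis perpendicular to $M$ passing through $M$. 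Lifting $u_{1},u_{2}$ to $h_{1},h_{2}\in\bigcup_{j}U_{K^{(j)}}(G)\subseteq G_{\st(\sigma,\partial_{\infty}\mathcal{A})}^{0}$, the element $g:=h_{1}kh_{2}$ lies in $G_{\st(\sigma,\partial_{\infty}\mathcal{A})}^{0}\,k\,G_{\st(\sigma,\partial_{\infty}\mathcal{A})}^{0}$ and has all the required properties.

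The step I expect to be the main obstacle is the rank-one reduction: making precise, for an arbitrary closed strongly transitive type-preserving $G$ not assumed of algebraic origin (and with $\Delta$ possibly reducible), the transverse tree, the surjection of $\bigcup_{j}U_{K^{(j)}}(G)$ onto $U$, and the compatibility of translation lengths and axes between $\Delta$ and that tree. If one prefers to remain inside $\Delta$, the same computation can be done by hand using Lemma \ref{lem::existance_elliptic}: after applying $k$, one folds $k(\mathcal{A})$ through the wall $M^{(\ell)}$ onto an apartment agreeing with $\mathcal{A}$ beyond $M^{(\ell)}$, then straightens by an element of $G_{\st(\sigma,\partial_{\infty}\mathcal{A})}^{0}$, obtaining directly an element of $G_{\st(\sigma,\partial_{\infty}\mathcal{A})}^{0}\,k\,G_{\st(\sigma,\partial_{\infty}\mathcal{A})}^{0}$ that is hyperbolic with axis perpendicular to $M$, translation length $2\ell$, and attracting endpoint in $\partial_{\infty}(\mathcal{A}-H)$; here the delicate point is the precise bookkeeping of which wall is sent where so that only one factor $k$ is used.
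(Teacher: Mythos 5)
Your main route founders on the step you yourself flag as the obstacle, and that gap is genuine rather than cosmetic. The reduction to ``a closed rank-one group over a non-Archimedean local field acting on its Bruhat--Tits tree'' transverse to $M$ is not available at the level of generality of this lemma: here $\Delta$ is an arbitrary locally finite thick affine building (possibly reducible, possibly of dimension $1$) and $G$ is merely closed, strongly transitive and type-preserving, so Theorems \ref{main_thm2} and \ref{thm::main3} do not supply an algebraic group, and for instance when $\Delta$ is a product of trees acted on by a product of Burger--Mozes-type universal groups there is simply no local field in sight. Moreover, the Moufang property furnished by Theorem \ref{main_thm2} concerns root groups of the spherical building $\partial_{\infty}\Delta$, whereas your computation needs the affine fixators $U_{K^{(j)}}(G)$ of half-apartments of $\Delta$ to be parametrised by, and to surject onto the unipotents of, a rank-one group, together with a dictionary between valuations and translation lengths in $\Delta$. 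None of this is established, and without it the decisive step --- ``the relevant diagonal entry has the form $1+(\text{unit})\cdot(\text{parameter of }u_{1})$, choose the parameter so that this entry has valuation $\ell$'' --- has no meaning. (Your preliminary observations are fine: $U_{K^{(j)}}(G)\subseteq G_{\st(\sigma,\partial_{\infty}\mathcal{A})}^{0}$ because $K^{(j)}$ contains a $\sigma$-cone, and each $U_{K^{(j)}}(G)$ is transitive on the apartments containing $K^{(j)}$ by Lemma \ref{lem::existance_elliptic}(\ref{lem::existance_elliptic1}); it is only the passage from there to a field that fails.)

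The argument you relegate to your final sentence as an optional alternative is, in fact, the paper's proof, and it is the one you should carry out. Fix the $M$-parallel wall $M'\subset H$ with $\dist_{\mathcal{A}}(M,M')=\ell$ and let $H'$ be the half-apartment bounded by $M'$ containing $\St(\sigma,\partial_{\infty}\mathcal{A})$ in its ideal boundary. Lemma \ref{lem::existance_elliptic}(\ref{lem::existance_elliptic1}) provides $h\in G_{\st(\sigma,\partial_{\infty}\mathcal{A})}^{0}$ fixing $H'$ pointwise and folding $\mathcal{A}-H'$ off $\mathcal{A}$ at $M'$; one then applies $k$, and a second element $h'\in G_{\st(\sigma,\partial_{\infty}\mathcal{A})}^{0}$ fixing $H$ pointwise straightens the resulting apartment $H\cup k\bigl(\mathcal{B}-(\mathcal{A}-H)\bigr)$ back into $\mathcal{A}$, where $\mathcal{B}=(\mathcal{A}-H')\cup h(\mathcal{A}-H')$. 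The element $g=h'kh$ lies in $G_{\st(\sigma,\partial_{\infty}\mathcal{A})}^{0}\,k\,G_{\st(\sigma,\partial_{\infty}\mathcal{A})}^{0}$, and tracking a point $x\in\mathcal{A}-H$ shows it is reflected first across $M'$ (by $h$) and then across $M$ (by $h'k$), so $g$ acts on $\mathcal{A}-H$ as the translation by $2\ell$ perpendicular to $M$ towards $\partial_{\infty}(\mathcal{A}-H)$; this is purely combinatorial and uses only strong transitivity and thickness. Note the order: the paper folds \emph{before} applying $k$ (at the wall $M'$ on the $H$-side, at distance $\ell$ from $M$) and unfolds afterwards, which is what makes the bookkeeping you worry about come out cleanly with a single factor $k$.
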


\begin{proof}
Let $\ell \in \NN^{*}$. Choose the $M$-parallel wall  $M'$  contained in $H$ such that $\dist_{\mathcal{A}}(M,M')= \ell$. Let $H'$ be the half-apartment of $\mathcal{A}$ with wall-boundary $M'$ and containing $\St(\sigma, \partial_{\infty} \mathcal{A})$ in its ideal boundary $\partial_{\infty} H'$.  By Lemma \ref{lem::existance_elliptic} there is $h \in G_{\st(\sigma,\partial_{\infty} \mathcal{A})}^{0}$ fixing $H'$ point-wise and moving away the half apartment $\mathcal{A} - H'$. Then $\mathcal{B}:=(\mathcal{A} - H') \cup h(\mathcal{A} - H')$ is again an apartment of $\Delta$ that contains $\mathcal{A} - H$ as a half-apartment.  Let $H'':=\mathcal{B} - (\mathcal{A} - H)$ and notice this half-apartment has $M$ as a boundary wall. Apply $k$ to $H''$, and so by our construction $k(H'') \cap \mathcal{A}= M$. Then $\mathcal{C}:= H \cup k(H'')$ is again an apartment in $\Delta$. We apply again Lemma \ref{lem::existance_elliptic} and find an element $h' \in G_{\st(\sigma,\partial_{\infty} \mathcal{A})}^{0}$ with $h'(H)=H$ point-wise and $h'k(H'') \subset \mathcal{A}$. By tracking the orientation during the construction, one notices $h'kh$ is a hyperbolic element of translation axis perpendicular to the wall $M$, containing $\mathcal{A} - H$ in its translation apartment, and with attracting end-point in $\partial_{\infty} (\mathcal{A} - H)$. The translation length of $h'kh$ is exactly $2 \ell$.
\end{proof}

\medskip
Let $G$ be a closed, strongly transitive and type-preserving subgroup of $\Aut(\Delta)$. Let $\mathcal{A}$ be an apartment of $\Delta$, an  ideal chamber $c$ of the spherical apartment $\partial_{\infty} \mathcal{A}$, and $x_0$ a special vertex of $\mathcal{A}$. Then we have the well known \textbf{polar decomposition}  $G=G_{x_0} AG_{x_0}$, where $A:= \{g \in \Stab_{G_0}(\mathcal{A}) \; \vert \; g \text{ hyperbolic}\} \cup \{e\}$ is the subgroup of euclidean translations in the apartment $\mathcal{A}$ (for a proof see for example \cite[Lemma 4.7]{Cio}). As well, by the well known \textbf{Bruhat  decomposition} we have $G= G_{c}WG_{c}=G_{c}^{0}\Stab_{G}(\mathcal{A})G_{c}^{0} $, where $W$ is the finite Weyl group associated with $\Delta$ and its elements can be lifted (not uniquely) to elements in $\Stab_{G}(\mathcal{A})$.

In what follows we will consider unitary representations $(\pi, \mathcal{H})$ of $G$ that are strongly continuous. The idea of the proof of Theorem \ref{thm::main} is the same as in \cite{BM00b}.


\begin{proof}[Proof of Theorem \ref{thm::main}]
Fix a unitary representation $(\pi, \mathcal{H})$ of $G$ without non-zero $G$-invariant vectors.  Suppose there exist two non-zero vectors $v_1,v_2 \in \mathcal{H}$ such that the $(v_1,v_2)$-matrix coefficient does not vanish at $\infty$. As $G=G_{x_0} A G_{x_0}$ there exist a sequence $\{a_n\}_{n\geq 1} \subset A$ and non-zero vectors $v_0,w_0 \in \mathcal{H}$ such that $a_n \xrightarrow[n \to \infty]{} \infty$ and $\vert \langle \pi(a_n)(v_0), w_0 \rangle \vert $ does not converges to $0$, when $n \to \infty$ (see for example \cite[Lemma 2.9]{Cio}).

As $a_n \xrightarrow[n \to \infty]{} \infty$ and $\Delta \cup \partial_{\infty} \Delta$ is a compact space with respect to the cone topology, by extracting a subsequence, we can suppose in addition $\{a_n(x_0)\}_{n \geq 1}$ converges to an ideal point $\xi \in \partial_{\infty} \mathcal{A}$ contained in the interior of a minimal ideal simplex  $\sigma \subset \partial_{\infty} \mathcal{A}$. Then apply Proposition \ref{prop:BM} to our sequence $\{a_n\}_{n \geq 1} \subset \Stab_{G}(\mathcal{A})$ of hyperbolic elements and the non-zero vectors $v_0,w_0 \in \mathcal{H}$ to obtain a non-zero vector $w \in \mathcal{H}$ such that $L:=\Stab_{G_{\st(\sigma,\partial_{\infty} \mathcal{A})}^{0}}(w)$ is of finite index in $G_{\st(\sigma,\partial_{\infty} \mathcal{A})}^{0}$.

Then consider the $(w,w)$-matrix coefficient on $G$, $f : G \to \CC$, given by $f(g):=  \langle \pi(g)(w), w \rangle$. This map $f$ is continuous with respect to the topology on $G$ and $L$-bi-invariant (i.e., $f(\ell_1g\ell_2)=f(g)$ for every $\ell_1, \ell_2 \in L$).

Choose some ideal chamber $c$ with $\sigma \subset c \in \Ch(\partial_{\infty} \mathcal{A})$. Then we have $G_{\st(\sigma,\partial_{\infty} \mathcal{A})}^{0} \leq G_c$. If $\sigma$ is just a sub-simplex of $c$, then $G_{\st(\sigma,\partial_{\infty} \mathcal{A})}^{0}$ is of infinite index in $G_c^{0}$, the latter being a normal subgroup of $G_c$. Still, one can use Bruhat decomposition $G= G_{c}WG_{c}=G_{c}^{0}\Stab_{G}(\mathcal{A})G_{c}^{0}$ and write $G=L H L$, where $H$ is a subset of $G$ containing many elements from  $\Stab_{G}(\mathcal{A})$ (e.g., hyperbolic elements), as well as many other elements.

By Lemma \ref{lem::sequence_elliptic_elements}, consider a sequence $\{k_n\}_{n \geq 1} \subset G_{x_0}$ and a sequence of half-apartments $\{H_n\}_{n \geq 1}$ in $\mathcal{A}$ with the required properties. By the continuity of the map $f$ we have $$f(L k_n L) \xrightarrow[n \to \infty]{} f(L)=1.$$

Let $j \in \NN$. By Lemma \ref{lem::hyper_elements}, applied to every $k_n$, the double coset $G_{\st(\sigma,\partial_{\infty} \mathcal{A})}^{0}k_nG_{\st(\sigma,\partial_{\infty} \mathcal{A})}^{0}$ contains a hyperbolic element $g_n=h_n k_n h_n'$, with $h_n,h_n' \in G_{\st(\sigma,\partial_{\infty} \mathcal{A})}^{0}$, of translation length $2j$, containing the half-apartment $H_n$ in its translation apartment, of translation axis perpendicular to boundary wall of $H_n$, and of attracting end-point in $\partial_{\infty} (H_n) \subset \partial_{\infty} \mathcal{A}$. Then $\{g_n = h_n k_n h_n'\}_{n \geq 1}$ admits a subsequence, by simplicity denoted by $\{g_n\}_{n \geq 1}$, that converges to an element $g_j \in A$ that is hyperbolic, of attracting endpoint in  $\partial_{\infty} (H_1) \subset \partial_{\infty} \mathcal{A}$, of translation length $2j$, and whose translation axis is perpendicular to the boundary wall of $H_1$.

Moreover, as $L$ has finite index in $G_{\st(\sigma,\partial_{\infty} \mathcal{A})}^{0}$ and $h_n,h_n' \in G_{\st(\sigma,\partial_{\infty} \mathcal{A})}^{0}$, by passing to a subsequence $\{g_{n_i}\}_{i \geq 1}$ we can write $h_{n_i} =\sigma \ell_{n_i}$, $h_{n_i}'=\ell_{n_i}' \sigma'$, with $\ell_{n_i},\ell_{n_i}' \in L$ and $\sigma, \sigma' \in G_{\st(\sigma,\partial_{\infty} \mathcal{A})}^{0}$. Then $$\lim\limits_{i \to \infty}\ell_{n_i} k_{n_i} \ell_{n_i}' = \sigma^{-1} g_j (\sigma')^{-1} $$
and because $g_j G_{\st(\sigma,\partial_{\infty} \mathcal{A})}^{0}g_j^{-1}= G_{\st(\sigma,\partial_{\infty} \mathcal{A})}^{0}$ (see Lemma \ref{lem::contr_group_alpha}) we can write further as 
$$\lim\limits_{i \to \infty}\ell_{n_i} k_{n_i} \ell_{n_i}' =  g_j \sigma_j, \text{ with } \sigma_j\in G_{\st(\sigma,\partial_{\infty} \mathcal{A})}^{0}.$$

Then as well by the continuity of the map $f$ we have 
$$f(L k_n L) \xrightarrow[n \to \infty]{} f(Lg_{j}\sigma_{j} L)$$
and therefore $f(Lg_{j}\sigma_{j} L)=1= \langle \pi(g_{j}\sigma_{j})(w), w \rangle$ for every $j \in \NN$. We have the equality $ \langle \pi(g_{j}\sigma_{j})(w), w \rangle=  \langle w, w \rangle$ if and only if $w$ is $g_{j}\sigma_{j}$-invariant, for every $j \in \NN$. Since $L$ has finite index in $G_{\st(\sigma,\partial_{\infty} \mathcal{A})}^{0}$, we can choose $\{\sigma_j\}_{j \in \NN} \subset  G_{\st(\sigma,\partial_{\infty} \mathcal{A})}^{0}/L$ that is a finite set.  As $g_j g_{j+i}^{-1}$ is still a hyperbolic element of $A$, for every $i,j \in \NN$, one can conclude there is a hyperbolic element $a \in A - \{e\}$ such that $w$ is $a$-invariant. Then by standard methods as Mautner's phenomenon (see for example  \cite[Prop. 2.12]{Cio}) applied to the sequence $\{a^{n}\}_{n \geq 1}$, resp., $\{a^{-n}\}_{n \geq 1}$, one has $w$ is $U_a^{-}$ and $U_a^{+}$-invariant.  If in addition $G$ is topologically simple, by \cite[Corollary 4.18]{Cio}, the non-zero vector $w$ is $G$-invariant. This is a contradiction with our assumption at the beginning of the proof, and so the theorem is proven.
\end{proof}

\begin{bibdiv}
\begin{biblist}

\bib{AB}{book}{
   author={Abramenko, Peter},
   author={Brown, Kenneth S.},
   title={Buildings},
   series={Graduate Texts in Mathematics},
   volume={248},
   note={Theory and applications},
   publisher={Springer},
   place={New York},
   date={2008},
}

\bib{Bo}{book}{
   author={Borel, A.},
   title={Linear algebraic groups},
    series={Graduate texts in mathematics},
   volume={126},
   publisher={New York: Springer-Verlag},
   date={1991},
}

\bib{BoT65}{article}{
   author={Borel,  A.},
   author={Tits, J.},
   title={Groupes r\'{e}ductifs},
   journal={Publications Math\'{e}matiques de l'IH\'{E}S},
   volume={27},
   date={1965},
   pages={55--151},
}

\bib{BoT}{article}{
   author={Borel,  A.},
   author={Tits, J.},
   title={Homomorphismes ``Abstraits'' de Groupes Algebriques Simples},
   journal={Annals of Mathematics},
   volume={97},
   number={3},
   date={1973},
   pages={499--571},
}

\bib{BT}{article}{
   author={Bruhat, F.},
   author={Tits, J.},
   title={Groupes r\'{e}ducatifs sur un corps local: I. Donn\'{e}es radicelles valu\'{e}es},
   journal={Inst. Hautes \'Etudes Sci. Publ. Math.},
   number={41},
   date={1972},
   pages={5--251},
}

\bib{BTII}{article}{
   author={Bruhat, F.},
   author={Tits, J.},
   title={Groupes r\'{e}ducatifs sur un corps local: II. Sch\'{e}mas en groupes. Existence d'une donn\'{e}e radicielle valu\'{e}e },
   journal={Inst. Hautes \'Etudes Sci. Publ. Math.},
   number={60},
   date={1984},
   pages={197--376},
}

\bib{BM00a}{article}{
   author={Burger, Marc},
   author={Mozes, Shahar},
   title={Groups acting on trees: from local to global structure},
   journal={Inst. Hautes \'Etudes Sci. Publ. Math.},
   number={92},
   date={2000},
   pages={113--150 (2001)},
}

\bib{BM00b}{article}{
   author={Burger, Marc},
   author={Mozes, Shahar},
   title={Lattices in products of trees},
   journal={Inst. Hautes \'Etudes Sci. Publ. Math.},
   number={92},
   date={2000},
   pages={151--194 (2001)},
}

\bib{CM}{article}{
   author={Caprace, P.-E.},
   author={Monod, N.}
   title={Fixed points and amenability in non-positive curvature},
   journal={Math. Ann.},
   volume={356(4)},
   date={2013},
   pages={1303--1337},
   }
   
   \bib{CM1}{article}{
   author={Caprace, P.-E.},
   author={Monod, N.}
   title={An indiscrete Bieberbach theorem: from amenable CAT(0) groups to Tits buildings},
   journal={J. \'{E}c. Polytech. Math.},
   volume={2},
   date={2015},
   pages={333--383},
   }
   
   \bib{CC}{article}{
   author={Caprace, P.-E.},
   author={Ciobotaru, C.}
   title={Gelfand pairs and strong transitivity for Euclidean buildings},
   journal={Ergodic Theory and Dynamical Systems},
   volume={35},
   issue={04}
   date={2015},
   pages={1056--1078},
   }

\bib{Cio}{article}{
   author={Ciobotaru, C.},
   title={A unified proof of the Howe--Moore property},
   journal={Journal of Lie Theory},
   volume={25},
   date={2015},
   pages={65--89},
   issn={1083-4362},
 note={arXiv:1403.0223},
   }
   
   \bib{Ga}{book}{
   author={Garrett, Paul},
   title={Buildings and Classical Groups},
   publisher={Chapman $\&$ Hall, London},
   date={1997},
}

   \bib{HM79}{article}{
  author={Howe, Roger E.},
   author={Moore, Calvin C.},
   title={Asymptotic properties of unitary representations},
   pages={72--96},
   journal={Journal of Functional Analysis},
   volume={32}
   year={1979},
  }
  
  \bib{Ma}{book}{
   author={Margulis, G. A.},
   title={Discrete subgroups of semisimple Lie groups},
   volume={17}
   publisher={Ergebnisse der Mathematik und ihrer Grenzgebiete (3), Springer-Verlag, Berlin,},
   date={1991},
   pages={x+388pp},
}
  
  \bib{RR}{unpublished}{
  author={Rapinchuk, A.S.},
   author={Rapinchuk, I.A.},
   title={Recent developments in the theory of linear algebraic groups: Good reduction and finiteness properties},
   note={ 	arXiv:2101.09811},
   year={2021},
 }

\bib{Ro}{book}{
   author={Ronan, Mark},
   title={Lectures on Buildings},
   volume={7},
   publisher={Academic Press},
   date={1989},
   pages={201},
}

\bib{Rou}{article}{
  author={Rousseau, Guy},
   title={Masures affines},
   pages={859--921},
   journal={Pure Appl. Math. Quarterly (in honor of J. Tits)},
   volume={7},
   number={3}
   year={2011},
  }

\bib{Tits74}{book}{
   author={Tits, Jacques},
   title={Buildings of spherical type and finite BN-pairs},
   series={Lecture Notes in Mathematics, Vol. 386},
   publisher={Springer-Verlag, Berlin-New York},
   date={1974},
   pages={x+299},
   review={\MR{0470099}},
}

\bib{Tits79}{article}{
   author={Tits, J.},
   title={Reductive groups over local fields},
   conference={
      title={Automorphic forms, representations and $L$-functions},
      address={Proc. Sympos. Pure Math., Oregon State Univ., Corvallis,
      Ore.},
      date={1977},
   },
   book={
      series={Proc. Sympos. Pure Math., XXXIII},
      publisher={Amer. Math. Soc., Providence, R.I.},
   },
   date={1979},
   pages={29--69},
   review={\MR{546588}},
}
	
\bib{Wi}{book}{
   author={Weiss, R.},
   title={The structure of affine buildings},
   volume={168},
   publisher={Annals of Mathematics Studies},
   date={2009},
   pages={381},
}


\end{biblist}
\end{bibdiv}

\end{document}